\documentclass[11pt,fleqn,a4paper ]{article} %,draft

\usepackage{amsmath,amssymb,amsthm,amsfonts,tikz-cd,enumitem} %,enumerate ,backref,refcheck 
\usepackage[mathscr]{eucal}
\usepackage[all]{xy}

\usepackage{hyperref}
\hypersetup{colorlinks, linkcolor=blue, citecolor=blue, urlcolor=blue}

\flushbottom
\frenchspacing
\allowdisplaybreaks

\makeatletter
\long\def\@makecaption#1#2{%
	\vskip\abovecaptionskip\footnotesize
	\sbox\@tempboxa{#1. #2}%
	\ifdim \wd\@tempboxa >\hsize
	#1. #2\par
	\else
	\global \@minipagefalse
	\hb@xt@\hsize{\hfil\box\@tempboxa\hfil}%
	\fi
	\vskip\belowcaptionskip}
\makeatother

\setlength{\textwidth}{160.0mm}
\setlength{\textheight}{245.0mm}
\setlength{\oddsidemargin}{0mm}
\setlength{\evensidemargin}{0mm}
\setlength{\topmargin}{-15mm}
\setlength{\parindent}{5.0mm}

\newcommand{\p}{\partial}

\newcommand{\sgn}{\mathop{\rm sgn}\nolimits}

\newtheorem{theorem}{Theorem}%[section]
\newtheorem{lemma}[theorem]{Lemma}
\newtheorem{corollary}[theorem]{Corollary}
\newtheorem{conjecture}[theorem]{Conjecture}
\newtheorem{question}{Question}
\newtheorem{proposition}[theorem]{Proposition}
{\theoremstyle{definition}
	\newtheorem{definition}[theorem]{Definition}
	\newtheorem{remark}[theorem]{Remark}
	\newtheorem{example}[theorem]{Example}
}

\newcommand{\todo}[1][\null]{\ensuremath{\clubsuit}}

\newcommand{\noprint}[1]{}

\begin{document}
\setlist[description]{font=\normalfont}

\par\noindent {\LARGE\bf
Generic graded contractions of Lie algebras
\par}

\vspace{5.5mm}\par\noindent{\large
Mikhail V. Kochetov$^\dag$ and Serhii D. Koval$^{\dag\ddag}$
}

\vspace{5.5mm}\par\noindent{\it\small
$^\dag$Department of Mathematics and Statistics, Memorial University of Newfoundland,\\
$\phantom{^\dag}$\,St.\ John's (NL) A1C 5S7, Canada
\par}

\vspace{2mm}\par\noindent{\it\small
$^\ddag$Institute of Mathematics of NAS of Ukraine, 3 Tereshchenkivska Str., 01024 Kyiv, Ukraine
\par}

\vspace{5mm}\par\noindent
E-mails:
mikhail@mun.ca,
skoval@mun.ca

%\maketitle

\vspace{6mm}\par\noindent\hspace*{10mm}\parbox{150mm}{\small
We study generic graded contractions of Lie algebras
from the perspectives of group cohomology, affine algebraic geometry and monoidal categories.
We show that generic graded contractions with a fixed support
are classified by a certain abelian group, which we explicitly describe.
Analyzing the variety of generic graded contractions as an affine algebraic variety
allows us to describe which generic graded contractions 
define graded degenerations of a given graded Lie algebra.
Using the interpretation of generic $G$-graded contractions
as lax monoidal structures on the identity endofunctor
of the monoidal category of $G$-graded vector spaces,
we establish a functorial version of the Weimar--Woods conjecture on equivalence of generic graded contractions.
}\par\vspace{2mm}

\noprint{
Keywords:
Graded Lie algebras;
Graded contractions;
Group cohomology;
Affine varieties;
Binomial ideals;
Lax monoidal functors.

MSC:

17Bxx  Lie algebras and Lie superalgebras 
     17B70  Graded Lie algebras
     17B05  Structure theory for Lie algebras and superalgebras
     17B81  Applications of Lie (super)algebras to physics, etc.
     
18Mxx  Monoidal categories and operads
     18M05  Monoidal categories, symmetric monoidal categories
     
20Jxx  Connections of group theory with homological algebra and category theory
     20J06  Cohomology of groups
}

\section{Introduction}~\label{sec:Introduction}

Deformation theory of different algebraic and geometric objects
has been a subject of significant interest in mathematics
ever since the study of Riemann surfaces and their moduli spaces by Riemann in the XIX century,
see a historical review of deformation theory in~\cite{fial2008a}.
The inverse process to deformation is that of degeneration~\cite{levy1967a} and, in particular, contraction.
A well-known example from physics is the contraction
of Poincar\'e group of special relativity to Galilean group of classical mechanics
when the speed of light is assumed to approach infinity.
This example motivated Segal~\cite{sega1951a} to initiate the study 
of contractions of Lie groups and Lie algebras.
Later, In\"on\"u and Wigner~\cite{inon1953a,inon1954a} considered the simplest types of one-parameter continuous contractions,
where some of the basis elements of the Lie algebra are scaled by a parameter that tends to zero.
This type of contractions is now called the {\it In\"on\"u--Wigner contractions}.
Saletan~\cite{sale1961a} and then Doebner and Melsheimer~\cite{doeb1967a}
generalized this concept in several directions, leading to what are now known as
{\it generalized In\"on\"u--Wigner contractions}.

Contractions have found applications in the study of representations of Lie groups~\cite{dool1985a} and Lie algebras~\cite{catt1979a,weim1991a},
invariants~\cite{weim2008a} and special functions~\cite{kaln1999a}.
%It was long conjectured that generalized In\"on\"u--Wigner contractions were universal,
%in the sense that any contraction is equivalent to a generalized In\"on\"u--Wigner contraction,
%see~\cite{weim1991a,weim2000a}.
%The latter turned out to be false~\cite{popo2010a}. 
For a more comprehensive literature review on this topic,
as well as the state of the art on contractions between low-dimensional real and complex Lie algebras,
see, e.g.,~\cite{nest2006a}.
Contractions of some infinite-dimensional Lie algebras (Witt, Virasoro and affine Kac--Moody algebras)
were studied in~\cite{fial2005a,fial2006a}.

Contractions of Lie algebras are closely related to degenerations.
In fact, over the field of complex numbers these two concepts coincide due to Borel's closed orbit lemma.
Milnor, in his seminal work on invariant metrics on Lie groups~\cite{miln1976a},
used degenerations of Lie algebras to demonstrate the existence of invariant Riemannian metrics on Lie groups.
Essentially, degenerations of an $n$-dimensional Lie algebra $\mathfrak L$
are limit points of its ${\rm GL}_n(\mathbb F)$-orbit $\mathcal O(\mathfrak L)$ in the variety of $n$-dimensional Lie algebras
endowed with the Zariski topology.
When the orbit $\mathcal O(\mathfrak L)$ is a Zariski open set,
the Lie algebra $\mathfrak L$ is called {\it rigid}.
Intuitively, a Lie algebra is rigid if all Lie algebras close to it
(in the sense of Zariski topology) are isomorphic.
Rigidity of Lie algebras has been extensively studied
and characterized in different terms,
see, e.g.,~\cite{carl1984a,goze2006a} and references therein. 
The classification of rigid Lie algebras remains an important open problem.

While the general linear group ${\rm GL}_n(\mathbb F)$ is the standard choice to study degenerations, 
in the context of Lie algebras graded by an abelian group~$G$, it is natural to replace it with an algebraic torus,
see Subsection~\ref{sec:GradingsAndContractions}.
Montigny and Patera in~\cite{mont1991a} defined a new type of Lie algebra contractions,
which are in general multi-parametric and not necessarily continuous.
According to the authors, this provides ``a much larger variety of contraction `limits' to be studied''.
This type of contractions is known as {\it graded contractions}, and its defining feature is the preservation
of a chosen $G$-grading during the contraction process.
In Section~\ref{sec:VarietyOfContractions},
we investigate which among these contractions are graded degenerations (Corollary~\ref{cor:GradedDegenerationsAndContractions}) and,
in particular, related to In\"on\"u--Wigner contractions.

Over the past three decades, graded contractions have been extensively studied from various perspectives,
including computations for specific graded Lie algebras \cite{beru1993a,cout1991a,drap2024b, drap2024a,hriv2006a,hriv2013a,kost2001a,mont1996a} 
and theoretical aspects \cite{mood1991a,weim2006a,weim2006b}.
The concept of graded contraction is flexible with respect to changes in the underlying category of algebras
and thus can be easily transferred to other structures,
such as superalgebras~\cite{mont1991a} and Jordan algebras~\cite{kash2003a}.
For a review of the state of the art in graded contractions, we refer to the recent paper by Draper, S\'anchez-Ortega and Meyer~\cite{drap2024a}.

For generic graded contractions,
which are studied in \cite{mood1991a,weim1995a,weim2006a} and are also the subject of the present paper,
the defining conditions are chosen in such a way
that the contraction can be applied uniformly to all $G$-graded Lie algebras.
This allows us to study such contractions from various perspectives:
group cohomology, affine algebraic geometry and monoidal categories.
As a result, we not only generalize and reinterpret existing results on classification of such contractions from~\cite{mood1991a,weim2006a},
but also place them into a wider context.

The structure of the paper is as follows.
In Section~\ref{sec:Preliminaries}, we briefly review the necessary background on cohomology of abelian groups as well as  gradings,
contractions, degenerations, and graded contractions.

In Section~\ref{sec:GroupCohomology}, we discuss the classification of generic graded contractions.
To any subset $S$ of the set of unordered pairs in $G$,
we associate the free abelian groups of
``surviving defining relations'' and ``surviving higher-order identities''
and use them to show that contractions with support~$S$ 
are classified by a certain abelian group ${\rm H}^2_S(\mathbb F^\times)$,
which can be computed using the exact sequences presented in Theorem~\ref{thm:ClassificationOfContractions}.
In particular, for an algebraically closed or a real closed field $\mathbb F$,
we reinterpret the results of Weimar--Woods~\cite{weim2006a}.

Section~\ref{sec:VarietyOfContractions} is devoted to the affine variety of generic graded contractions.
We study the action of an algebraic torus on this variety
and the orbit closures of contractions.
In Theorem~\ref{thm:VanishingIdeal}, we describe the vanishing ideal of such an orbit closure.
We show that the contractions in the closure of the unique open orbit
are precisely the ``continuous contractions'' in the sense of~\cite{mood1991a}
and relate them to In\"on\"u--Wigner contractions
using Theorem~\ref{thm:ClassificationOfContractions} and~\cite[Theorem~VI.2]{weim1995a}.

In Section~\ref{sec:EquivOfContractions}, we discuss the Weimar--Woods conjecture that {\it two graded contractions are isomorphic
if and only if they are equivalent via normalization.}
We show that a generic $G$-graded contraction endows the identity endofunctor
of the monoidal category of $G$-graded vector spaces with a lax monoidal structure.
This allows us to prove a functorial version of the Weimar--Woods conjecture:
{\it two graded contractions define naturally isomorphic lax monoidal endofunctors
if and only if they are equivalent via normalization} (Proposition~\ref{prop:LaxWeimWoodsConj}).

Finally, in Section~\ref{sec:Conclusion},
we summarize the results of the paper and discuss a few possible directions for future work.
%We mark open questions and new research directions by the bullet symbol $\bullet$.

\section{Preliminaries}\label{sec:Preliminaries}

In this section, we review some basic facts about cohomology and extensions of abelian groups,
see, e.g., \cite[Chapter~I]{macl1963A} and \cite[Chapter~6]{weib1994A},
contractions~\cite{nest2006a}, gradings~\cite{eldu2013A}
and graded contractions of Lie algebras~\cite{drap2024a,mont1991a}.\looseness=-1

\subsection{Cohomology and extensions of abelian groups}\label{sec:PrelimGroupCohomology}

Let $R$ be an associative unital ring and let $A$ and $B$ be $R$-modules.
An {\it extension} of $A$ by $B$ is an $R$-module $E$
containing a submodule isomorphic to $B$ such that $E/B\simeq A$.
In other words, the modules $A$, $B$ and $E$ fit into the short exact sequence
\[
\begin{tikzcd}
B
\arrow[r,hook] 
&E\arrow[r,twoheadrightarrow]
&A,
\end{tikzcd}
\]
where the arrows $\hookrightarrow$ and $\twoheadrightarrow$ denote injective and surjective $R$-module homomorphisms, respectively.
Two extensions $E$ and $E'$ are called equivalent if there exists an $R$-module homomorphism $\varphi\colon E\to E'$
such that the diagram
\[
\begin{tikzcd}
B
\arrow[r,hook] 
\arrow[d,equal] 
&E\arrow[d,"\varphi"]\arrow[r,twoheadrightarrow]
&A\arrow[d,equal] 
\\
B
\arrow[r,hook] 
&E'\arrow[r,twoheadrightarrow]
&A
\end{tikzcd}
\]
is commutative.
It follows that $\varphi$ is an isomorphism of $R$-modules,
and this gives rise to an equivalence relation on the set of extensions.
The set of equivalence classes of extensions of $A$ by $B$ is denoted by ${\rm Ext}_R(A,B)$
or, simply, ${\rm Ext}(A,B)$ when $R$ is clear from the context.

For two extensions $E$ and $E'$ of $A$ by $B$ one can form the so-called Baer's sum $E+E'$ of extensions~\cite[Chapter III.2]{macl1963A},
which defines a binary operation on the set ${\rm Ext}(A,B)$
that makes it an abelian group~\cite[Chapter III.2, Theorem~2.1]{macl1963A}.
The neutral element of this group is the equivalence class of the direct sum $A\oplus B$;
the elements of this class are called {\it split extensions}.
In particular, all extensions are split when $A$ is projective or $B$ is injective as $R$-modules,
\cite[Sections~2.2 and~2.3]{weib1994A}.
In the case of abelian groups, i.e., $R=\mathbb Z$, this means that $A$ is free or $B$ is divisible.

Let $A$ and $B$ be abelian groups.
The group ${\rm Ext}(A,B)$ is isomorphic to the symmetric part of the second cohomology group ${\rm H}^2(A,B)$
of $A$ with coefficients in $B$, considered as an $A$-module with the trivial $A$-action, see~\cite[Theorem~6.6.3]{weib1994A}.
More specifically, the abelian group ${\rm H}^2_{\rm sym}(A,B)$ is the quotient of the group of symmetric two-cocycles ${\rm Z}^2_{\rm sym}(A,B)$
of $A$ with values in $B$ by the group of coboundaries ${\rm B}^2(A,B)$,
\[
{\rm H}^2_{\rm sym}(A,B):={\rm Z}^2_{\rm sym}(A,B)/{\rm B}^2(A,B).
\]
A {\it symmetric two-cocycle} $\varepsilon$ is a map from $A\times A$ to $B$ satisfying $\varepsilon(a_1,a_2)=\varepsilon(a_2,a_1)$
and
\[
\varepsilon(a_1,a_2)\varepsilon(a_1a_2,a_3)=\varepsilon(a_1,a_2a_3)\varepsilon(a_2,a_3)
\quad\text{for all}\quad
a_1,a_2,a_3\in A,
\]
where the group operations in $A$ and $B$ are written multiplicatively.
For an arbitrary map $\alpha\colon A\to B$, its {\it coboundary} $d\alpha$ is the symmetric two-cocycle given by
\[
d\alpha(a_1,a_2):=\alpha(a_1)\alpha(a_2)\alpha(a_1a_2)^{-1}
\quad\text{for all}\quad
a_1,a_2\in A.
\]
The correspondence between the groups ${\rm Ext}(A,B)$ and ${\rm H}^2_{\rm sym}(A,B)$ is constructed as follows.
For a group $E$ in a short exact sequence \smash{$B\stackrel{\raisebox{-1.2ex}[0ex][0ex]{\scriptsize$\iota$}}{\hookrightarrow}E\stackrel{\raisebox{-1.2ex}[0ex][0ex]{\scriptsize$\pi$}}{\twoheadrightarrow}A$}, consider a set-theoretic section $\sigma\colon A\to E$ of $\pi$.
Then the element $\sigma(a_1)\sigma(a_2)\sigma(a_1a_2)^{-1}$ belongs to $\ker\pi$,
thus by exactness of the sequence there exists a unique $\varepsilon(a_1,a_2)\in B$ such that $\iota(\varepsilon(a_1,a_2))=\sigma(a_1)\sigma(a_2)\sigma(a_1a_2)^{-1}$
for all $a_1,a_2\in A$.
It is easy to verify that the map $\varepsilon\colon A\times A\to B$ is a two-cocycle,
which is symmetric if and only if $E$ is abelian.
The equivalence class $[\varepsilon]\in{\rm H}^2(A,B)$
does not depend on the choice of the section~$\sigma$.

A common approach to computing ${\rm Ext}(A,B)$ relies on finding a projective resolution of $A$
and then computing ${\rm Ext}(A,B)$ as the first (left) derived functor of the functor ${\rm Hom}(\cdot,B)$,
see, e.g., \cite[Chapter~III]{macl1963A}.
In the case $R=\mathbb Z$, all subsequent derived functors are trivial.
The choice of the projective resolution does not affect the group ${\rm Ext}(A,B)$
and can be adapted to the group~$A$, which will be useful in Section~\ref{sec:GroupCohomology}.

\subsection{Contractions and degenerations of Lie algebras}\label{sec:ContAndDegenOfLieAlgs}

\par\noindent
{\it Contractions}.
Let $\mathfrak L:=(V,[\cdot,\cdot])$ be a real or complex Lie algebra
with an underlying vector space~$V$ and a Lie bracket $[\cdot,\cdot]$.
Consider a continuous function $U\colon(0,1]\to{\rm GL}(V)$.
A parameterized family of new Lie brackets on $V$ is defined in terms of the Lie bracket $[\cdot,\cdot]$ in the following way:
\begin{gather}\label{eq:ParameterizedLieBracket}
\forall t\in(0,1],\ \
\forall x,y\in V\colon\quad
[x,y]_t:=U_t^{-1}[U_tx,U_t y].
\end{gather}
It is clear that for any $t\in(0,1] $ the Lie algebra
$\mathfrak L_t:=(V,[\cdot,\cdot]_t)$ is isomorphic to $\mathfrak L$.

\begin{definition}\label{def:ContinuousContraction}
If the limit $[x,y]_0:=\lim_{t\to0^+}[x,y]_t$ exists for any $x,y\in V$,
then $[\cdot,\cdot]_0$ is a well-defined Lie bracket on $V$ and the Lie algebra $\mathfrak L_0:=(V,[\cdot,\cdot]_0)$
is called a {\it one-parametric continuous contraction}
(or simply {\it contraction}) of the Lie algebra $\mathfrak L=(V,[\cdot,\cdot])$.
\end{definition}

Among the functions $U\colon(0,1]\to{\rm GL}(V)$
those realized by specific diagonal matrices are particularly important
for the study of contractions and their applications in mathematical physics.
Consider the following family of such functions:
\begin{gather}\label{eq:GeneralizedIWMatrix}
U_t=\bigoplus_{s=0}^mt^{n_s}{\rm Id}_{V^{(s)}},\quad\mbox{where}\quad
V=\bigoplus_{s=0}^m V^{(s)}
\quad\mbox{and}\quad
n_0,\dots,n_m\in\mathbb Z,\quad m\geqslant1.
\end{gather}
Choosing a basis in each subspace $V^{(s)}$, we represent $U_t$ by a diagonal matrix with entries $t^{n_s}$, each repeated $\dim V^{(s)}$ times.
Then the limit $[\cdot,\cdot]_0$ exists if and only if
the following constraints are satisfied:
\begin{gather}\label{eq:GeneralizedIWConstraints}
n_{s_i}+n_{s_j}\geqslant n_{s_k},\quad
\mbox{whenever}\quad
c_{ij}^k\ne0,\quad
i,j,k=1,\dots,\dim V,
\end{gather}
where $c_{ij}^k$ stands for the components of the structure tensor
of the Lie algebra~$\mathfrak L$ in the chosen basis, with the $i$-th basis element belonging to $V^{(s_i)}$.
When these conditions hold,
the contracted algebra $\mathfrak L_0$ exists 
and is referred as a {\it generalized In\"on\"u--Wigner contraction} of $\mathfrak L$.
In the particular case $m=1$, $n_0=0$ and $n_1=1$,
the contracted algebra $\mathfrak L_0$ is called
a {\it simple In\"on\"u--Wigner contraction}.

\begin{remark}
As noted in~\cite{grun1988a}, we can associate to the decomposition
$V=\bigoplus_{s=0}^mV^{(s)}$ in~\eqref{eq:GeneralizedIWMatrix}
the filtration
\[
\mathfrak L:=\bigcup_{n\in\mathbb Z}\mathfrak L(n)
\quad\mbox{with}\quad
\mathfrak L(n)=\bigoplus_{i\in\{0,\dots,m\},\ n_i\leqslant n}V^{(i)}.
\]
The constraints~\eqref{eq:GeneralizedIWConstraints} are necessary and sufficient to ensure that
$[\mathfrak L(n),\mathfrak L(m)]\subset\mathfrak L(n+m)$.
The associated graded algebra $\operatorname{gr}\mathfrak L$
of $\mathfrak L$ with respect to the above filtration
is isomorphic to the contracted algebra $\mathfrak L_0$
under the contraction matrix $U(\varepsilon)$ from~\eqref{eq:GeneralizedIWMatrix}.
This follows directly from the description of the structure tensor $\tilde c_{ij}^k$ of $\mathfrak L_0$,
see, e.g., \cite[Section~3]{nest2006a}:
\[
\tilde c_{ij}^k=c_{ij}^k
\quad\mbox{if}\quad
n_{s_i}+n_{s_j}=n_{s_k}
\quad\mbox{and}\quad
\tilde c_{ij}^k=0
\quad\mbox{otherwise}.
\]
\end{remark}

\par\noindent
{\it Degenerations}.
%It is instructive to recall the concept of the Lie algebras {\it degenerations}
%and their relations to contractions.
Consider an $n$-dimensional vector space $V$ over a field $\mathbb F$.
Denote by $\mathcal L_n(\mathbb F)$ the variety of Lie algebra laws on $V$,
i.e., the set of all alternating bilinear maps from $V\times V$ to $V$ satisfying the Jacobi identity.
The set $\mathcal L_n(\mathbb F)$ is an algebraic set in the affine space $\wedge^2V^*\otimes V$,
which is invariant under the natural action
of ${\rm GL}(V)$ on $\wedge^2V^*\otimes V$,
given by
\begin{gather*}
(g\cdot\mu)(x,y)=g(\mu(g^{-1}x,g^{-1}y))
\quad\mbox{for all}\ \  
g\in{\rm GL}(V),\ 
\mu\in\mathcal L_n(\mathbb F)
\ \mbox{and}\ 
x,y\in V.
\end{gather*}
(Note that this is a left action, whereas the one in~\eqref{eq:ParameterizedLieBracket}, following the tradition in physics literature, is on the right.)
Denote by $\mathcal O(\mu)$ the orbit of $\mu\in\mathcal L_n(\mathbb F)$ under the action of ${\rm GL}(V)$
and by $\overline{\mathcal O(\mu)}$
its closure with respect to the Zariski topology in $\mathcal L_n(\mathbb F)$.

\begin{definition}\label{def:Degenerations}
The Lie algebra $\mathfrak L_0:=(V,\mu_0)$ is called a {\it degeneration}
of the Lie algebra $\mathfrak L:=(V,\mu)$ if $\mu_0\in\overline{\mathcal O(\mu)}$.
\end{definition}

When $\mathbb F$ is algebraically closed,
the orbit $\mathcal O(\mu)$ is open in its Zariski closure $\overline{\mathcal O(\mu)}$
by the {\it closed orbit lemma} in~\cite{bore1991A}.
Hence, the boundary $\overline{\mathcal O(\mu)}\setminus \mathcal O(\mu)$ is a union of orbits of strictly lower dimension.
Moreover, all points of $\mathcal O(\mu)$ are regular in $\overline{\mathcal O(\mu)}$.
Finally, the orbit $\mathcal O(\mu)$ is a constructible set,
so in the case $\mathbb F=\mathbb C$, the closure of $\mathcal O(\mu)$ in Zariski topology
coincides with its closure in the metric topology,
see~\cite[Corollary~1, p.~60]{mumf1999A}.
Therefore, over $\mathbb C$, the concepts of contraction and degeneration coincide.

It is worth pointing out that, over $\mathbb R$, any contraction is a degeneration.
The converse probably fails because the metric closure of an orbit may be strictly contained
in the Zariski closure, see examples in~\cite{eber2009a}.

\subsection{Gradings and graded contractions}\label{sec:GradingsAndContractions}

Let $G$ be an abelian group and $\mathbb F$ be a field.
A {\it $G$-grading} on an arbitrary $\mathbb F$-algebra $A$ (not necessarily unital or associative)
is a vector space decomposition $\Gamma\colon A=\bigoplus_{g\in G}A_g$
such that $A_gA_h\subseteq A_{gh}$ for all $g,h\in G$.
The elements of $A_g$ are said to be {\it homogeneous of degree $g$}
and the subspaces $A_g$ are called {\it homogeneous components}.
The {\it support} of the grading $\Gamma$
is the set ${\rm Supp}(\Gamma):=\{g\in G\colon A_g\ne0\}$.
If $\Gamma$ is fixed, $(A,\Gamma)$ is said to be a \emph{$G$-graded algebra}.
We may write~$A$ for a graded algebra if the grading is clear from the context.
For more detailed exposition see, e.g.,~\cite{eldu2013A}.

\medskip\par\noindent
{\it Graded contractions}.
Denote the multiplication of an algebra~$A$ by~$\mu$.
Given an arbitrary map $\varepsilon\colon G\times G\to\mathbb F$, we can define a new multiplication on the underlying vector space of $A$: 
\[
\mu^\varepsilon(x,y)=\varepsilon(g,h)\mu(x,y)
\quad\mbox{for all}\ \ 
g,h\in G,\ 
x\in A_g,\
y\in A_h,
\]
which defines a new algebra, $A^\varepsilon$.
However, if $A$ belongs to a specific variety (e.g., associative, commutative, Lie or Jordan algebras),
$A^\varepsilon$ is not guaranteed to belong to the same variety unless we impose restrictions on $\varepsilon$.
For example, if $\varepsilon$ is a two-cocycle on $G$ with values in $\mathbb F^\times$,
then~$A^\varepsilon$ is known as a \emph{cocycle twist} of $A$. In this case, if $A$ is an associative algebra,
then so is $A^\varepsilon$; also $(A^\varepsilon)^{\varepsilon^{-1}}=A$.
In the context of Lie algebras, the definition of a graded contraction takes the following form:

\begin{definition}\label{def:GradContLieAlg}
Let $\Gamma\colon\mathfrak L=\bigoplus_{g\in G}\mathfrak L_g$ be a $G$-grading on a Lie algebra~$\mathfrak L$.
A \emph{graded contraction} of $(\mathfrak L,\Gamma)$ is a map $\varepsilon\colon G\times G\to\mathbb F$
such that $\mathfrak L^\varepsilon$ is a Lie algebra.
Then $\mathfrak L^\varepsilon$ is called the {\it contracted algebra} or, by abuse of language, the \emph{graded contraction} of $(\mathfrak L,\Gamma)$ with respect to $\varepsilon$.
\end{definition}

\noindent
The algebra $\mathfrak L^\varepsilon$ is a Lie algebra if and only if the following conditions are satisfied:
\begin{enumerate}\itemsep=0ex
\item[$(i)$]
$\big(\varepsilon(g,h)-\varepsilon(h,g)\big)[x,y]=0$,

\item[$(ii)$]
$\big(\varepsilon(g,hk)\varepsilon(h,k)-\varepsilon(k,gh)\varepsilon(g,h)\big)[x,[y,z]]
+\big(\varepsilon(h,kg)\varepsilon(k,g)-\varepsilon(k,gh)\varepsilon(g,h)\big)[y,[z,x]]=0$
\end{enumerate}
for all $x\in\mathfrak L_g$, $y\in\mathfrak L_h$ and $z\in\mathfrak L_k$.
Conditions $(i)$ and $(ii)$ arise from the antisymmetry 
and the Jacobi identity, respectively,  applied to $\mathfrak L^\varepsilon$.

Fixing a Lie algebra~$\mathfrak L$ and a $G$-grading~$\Gamma$ on it,
the investigation of graded contractions of~$(\mathfrak L,\Gamma)$ can be quite laborious \cite{hriv2006a,hriv2013a}.
Trivially,  when~$\mathfrak L_{gh}=0$ and~$\mathfrak L_{ghk}=0$, 
conditions~$(i)$ and~$(ii)$ are satisfied automatically,
imposing no constraints on the values~$\varepsilon(g,h)$.
%The comprehensive study of graded contractions of $\mathfrak L$ with the $G$-grading $\Gamma$, of course, requires the consideration of the above specific cases that arise from the properties of the grading $\Gamma$.

To develop a general theory of graded contractions,
it is natural to disregard the specifics of the grading on $\mathfrak L$
and consider the so-called ``generic case'', i.e.,
require that $(i)$ and $(ii)$ hold {\it for all} $G$-graded Lie algebras.

\begin{proposition}\label{prop:GenericContraction}
The necessary and sufficient conditions for the map $\varepsilon\colon G\times G\to\mathbb F$
to define a $G$-graded contraction for all $G$-graded Lie algebras are
\begin{enumerate}\itemsep=0ex
\item[$(i')$]
$\varepsilon(g,h)=\varepsilon(h,g)$,
	
\item[$(ii')$]
$\varepsilon(g,hk)\varepsilon(h,k)=\varepsilon(k,gh)\varepsilon(g,h)$
\end{enumerate}
for all $g,h,k\in G$.
\end{proposition}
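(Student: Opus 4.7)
The plan is to split the proof into sufficiency and necessity. Sufficiency is immediate: if $(i')$ and $(ii')$ hold, then the scalar prefactors appearing in $(i)$ and $(ii)$ of the earlier discussion vanish identically, so those conditions are satisfied in every $G$-graded Lie algebra. The substance of the proposition is the converse, which I will establish by producing, for arbitrary prescribed $g,h,k\in G$, a $G$-graded Lie algebra in which the relevant brackets are nonzero and, in the ternary case, linearly independent.

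For $(i')$: fix $g,h\in G$ and take the free Lie algebra on two generators $x,y$, endowed with the $G$-grading determined by $\deg x=g$, $\deg y=h$ (extended multiplicatively to iterated brackets, which is well defined since degrees may be freely assigned on a free generating set). The element $[x,y]$ is a nonzero element of the homogeneous component of degree $gh$, so condition $(i)$ applied to this pair forces $\varepsilon(g,h)=\varepsilon(h,g)$.

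For $(ii')$: fix $g,h,k\in G$ and take the free Lie algebra on three generators $x,y,z$ with $\deg x=g$, $\deg y=h$, $\deg z=k$. Standard Hall/Lyndon basis considerations show that, among the three cyclic brackets $[x,[y,z]]$, $[y,[z,x]]$, $[z,[x,y]]$ in the multilinear degree-$3$ component, the Jacobi identity provides the unique linear relation; in particular, any two of them are linearly independent. Condition $(ii)$ applied to $x,y,z$ has the shape $A\,[x,[y,z]]+B\,[y,[z,x]]=0$, where $A$ and $B$ are precisely the two scalar coefficients in $(ii)$. Linear independence forces $A=B=0$. The vanishing of $A$ is exactly $(ii')$; the vanishing of $B$ gives the cyclic permutation $\varepsilon(h,kg)\varepsilon(k,g)=\varepsilon(k,gh)\varepsilon(g,h)$, which follows from $(ii')$ together with $(i')$, so imposes no additional constraint.

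There is no serious obstacle, as the proof amounts to a universality-of-the-free-object argument. The only points requiring care are (a) confirming that the free Lie algebra indeed admits the prescribed $G$-grading for arbitrary $g,h,k$, which is automatic since the grading is declared on the free generators and propagates through brackets, and (b) invoking the correct linear-independence statement for multilinear bracket monomials of length $3$ in the free Lie algebra on three generators.
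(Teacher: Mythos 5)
Your proposal is correct and follows essentially the same route as the paper: sufficiency via the cyclic relabelling of $(ii')$, and necessity by evaluating $(i)$ and $(ii)$ in a free Lie algebra with generators of prescribed degrees, using the linear independence of $[x,[y,z]]$ and $[y,[z,x]]$ in the multilinear degree-three component. The only cosmetic difference is that the paper works with a single free Lie algebra on generators $x_g,y_g,z_g$ indexed by all $g\in G$, whereas you take a separate two- or three-generator free Lie algebra for each choice of $g,h,k$; both are equally valid since the hypothesis quantifies over all $G$-graded Lie algebras.
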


\begin{proof}
Since $(ii')$ implies $\varepsilon(k,gh)\varepsilon(g,h)
=\varepsilon(h,kg)\varepsilon(k,g)$ by renaming variables, it is clear that $(i')$ and $(ii')$ are sufficient
for $\varepsilon$ to define a graded contraction for any $(\mathfrak L,\Gamma)$.

To prove the necessity, it suffices to find a $G$-graded Lie algebra $\mathfrak L$ satisfying
$(i)$ $[\mathfrak L_g,\mathfrak L_h]\ne0$ for all distinct $g,h\in G$,
$(ii)$ for all $g,h,k\in G$ there exist $x\in\mathfrak L_g$, $y\in\mathfrak L_h$, $z\in\mathfrak L_k$
such that $[x,[y,z]]$ and $[y,[z,x]]$ are linearly independent.
For instance, consider the free Lie algebra $\mathfrak F$ on the generators $x_g$, $y_g$ and $z_g$,
where~$g$ runs through $G$.
This algebra has a natural grading by setting the $G$-degree of
generators $x_g$, $y_g$ and $z_g$ to be $g$.
Since $[x_g,x_h]\ne0$ for all $g\ne h$, 
condition $(i)$ implies $(i')$ for the graded Lie algebra $\mathfrak F$. 
Since the commutators $[x_g,[y_h,z_k]]$ and $[y_h,[z_k,x_g]]$ are linearly independent in $\mathfrak F$, condition $(ii)$ implies $(ii')$. 
\end{proof}

\begin{remark} 
If $G$ is finite, then we can restrict to finite-dimensional Lie algebras in the above proposition. Indeed, we can replace the Lie algebra $\mathfrak F$ with the finite-dimensional nilpotent Lie algebra $\mathfrak F/\mathfrak F^4$, where $\mathfrak F^4$ is the fourth term of the lower central series of $\mathfrak F$. 
\end{remark}
%Within this framework, the necessary conditions $(i)$ and $(ii)$ are replaced with

When $\varepsilon$ does not take value $0$, conditions $(i')$ and $(ii')$ say that $\varepsilon$ is a symmetric two-cocycle with values in $\mathbb F^\times$ with trivial $G$-action: $\varepsilon\in {\rm Z}^2_{\rm sym}(G,\mathbb F^\times)$, see Section~\ref{sec:PrelimGroupCohomology}.

In what follows, we restrict our attention to this generic point of view,
as was done, for example, in \cite{mood1991a,weim2006a}.
All definitions and examples in this paper are presented within this framework.
In particular, when using the notion ``graded contraction'' we omit referring to 
a $G$-graded Lie algebra~$(\mathfrak L,\Gamma)$.
For a ``grading-dependent'' treatment of contractions, see, for example, \cite{drap2024b,drap2024a,mont1991a}.

The obvious examples of the graded contractions are the constant maps $\varepsilon=1$ and $\varepsilon=0$.
Their corresponding contracted algebras are respectively the original algebra $\mathfrak L$ and the abelian Lie algebra.
Both these maps are examples of {\it trivial graded contractions}, that is,
the graded contractions $\varepsilon\colon G\times G\to\mathbb F$ such that the for all $G$-graded Lie algebras $\mathfrak L$
the contracted algebra~$\mathfrak L^\varepsilon$  is isomorphic to~$\mathfrak L$
or to the abelian Lie algebra on the vector space $\mathfrak L$, see~\cite[Definition~2.6]{weim2006a}.

A more interesting example of a trivial graded contraction is that arising
from the normalization of homogeneous components of the grading $\Gamma$.
Let $\alpha\colon G\to\mathbb F^\times$ be an arbitrary map,
then the map $d\alpha\colon G\times G\to\mathbb F^\times$ defined in Section~\ref{sec:PrelimGroupCohomology}
%\begin{gather*}
%d\alpha(g,h)=\frac{\alpha(g)\alpha(h)}{\alpha(gh)},\quad
%\forall g,h\in G,
%\end{gather*}
is a graded contraction of $\mathfrak L$.
It is clear that $\mathfrak L^{d\alpha}$ is isomorphic to $\mathfrak L$.
The choice of the notation $d\alpha$ for this kind of contractions
is justified by the fact that it is the coboundary of the map $\alpha$
if we consider $\mathbb F^\times$ as a $G$-module with trivial action.
Furthermore, when $\alpha$ is a group homomorphism then $d\alpha=1$.

\medskip\par\noindent
{\it Graded degenerations}.
Let $G$ be an abelian group. Consider a $G$-grading $\Gamma$ on a finite-dimensional vector space $V$ over an algebraically closed field $\mathbb F$, $V=\bigoplus_{g\in G}V_g$, 
with $\dim V_g=n_g\in\mathbb N\cup\{0\}$.
Denote $\underline n=(n_g)_{g\in G}$ and let $\mathcal L_{\underline n}(\mathbb F)$ be the variety of Lie algebra laws on $V$ that make it a $G$-graded algebra.
The set $\mathcal L_{\underline n}(\mathbb F)$ is an algebraic set in the affine space $\wedge^2V^*\otimes V$,
which is invariant under the natural action
of the subgroup $\prod_{g\in G}{\rm GL}(V_g)$ of ${\rm GL}(V)$. %on $\wedge^2V^*\otimes V$.
We will consider the subgroup $D(\Gamma)$ of $\prod_{g\in G}{\rm GL}(V_g)$ consisting of the operators that act as (nonzero) scalars on each $V_g$.
It is clear that $D(\Gamma)$ is an algebraic torus of dimension $|{\rm Supp}(\Gamma)|$.

We can define graded degenerations in a way similar to how ordinary degenerations
were defined in Section~\ref{sec:ContAndDegenOfLieAlgs}, just replacing ${\rm GL}(V)$ with the group $D(\Gamma)$.
It turns out that the graded degenerations $\mu_0$ of a Lie algebra law $\mu\in\mathcal L_{\bar n}(\mathbb F)$
depend on the set $S={\rm Supp}_2(\Gamma)$ defined by
\[
{\rm Supp}_2(\Gamma):=\{(g,h)\in G\times G\mid \mu(V_g,V_h)\ne0\},
\]
which we will call the {\it second support} of the grading $\Gamma$.
Note that, in the case of Lie algebras, the order of $g$ and $h$ does not matter,
so ${\rm Supp}_2(\Gamma)$ can be regarded as a subset of the set $\mathcal P_G$ of unordered pairs $\{g,h\}$.
We discuss this in detail in Section~\ref{sec:VarietyOfContractions}.

\begin{proposition}\label{prop:SecondSupport}
Any subset of $\mathcal P_G$ is the second support of a $G$-graded Lie algebra.
Moreover, if $G$ is finite, this Lie algebra can be chosen finite-dimensional.
\end{proposition}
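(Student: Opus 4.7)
The plan is to construct, for any $S \subseteq \mathcal P_G$, a $G$-graded $2$-step nilpotent Lie algebra whose second support is exactly~$S$. Working in nilpotency class~$2$ is convenient: the commutator ideal is abelian and central, so every $G$-homogeneous subspace of it is automatically a graded ideal, and the Jacobi identity is preserved under any quotient for free.

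Concretely, for each pair $p = \{g, h\} \in S$ I would introduce two $G$-homogeneous generators: $u_p$ of degree~$g$ and $v_p$ of degree~$h$ (both of degree~$g$ in case $h = g$, which is needed because $[u_p, u_p] = 0$ automatically). Let~$V$ denote the $G$-graded vector space spanned by all these generators, and let~$\mathfrak F$ be the free $2$-step nilpotent Lie algebra on~$V$. Then, as a $G$-graded vector space, $\mathfrak F = V \oplus [V, V]$, with $[V, V] \cong \wedge^2 V$ via the map $a \wedge b \mapsto [a, b]$, and the grading on~$\mathfrak F$ is the natural extension of that on~$V$, that is, $\deg[a, b] = \deg(a)\deg(b)$.

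Next I would define $I \subseteq [V, V]$ to be the linear span of all basis brackets $[a, b]$ with $a, b$ generators satisfying $\{\deg a, \deg b\} \notin S$. Since $[V, V]$ is central in~$\mathfrak F$, any subspace of it is an ideal, so $\mathfrak L := \mathfrak F / I$ is a $G$-graded Lie algebra. To verify that $\mathrm{Supp}_2(\mathfrak L) = S$: for each $p = \{g, h\} \in S$, the bracket $[u_p, v_p]$ corresponds to a basis vector of~$\wedge^2 V$ that is \emph{not} among the spanning vectors of~$I$, hence $[u_p, v_p] \neq 0$ in~$\mathfrak L$ and $\{g, h\} \in \mathrm{Supp}_2(\mathfrak L)$. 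Conversely, for $\{g, h\} \notin S$, any bracket in~$[\mathfrak L_g, \mathfrak L_h]$ reduces, by centrality of the commutator ideal, to a linear combination of basis brackets $[a, b]$ with $\deg a = g$ and $\deg b = h$, each of which lies in~$I$ by construction, so $[\mathfrak L_g, \mathfrak L_h] = 0$. For the finite case, $|S| \leq |\mathcal P_G| < \infty$ yields a finite generating set, so~$\mathfrak L$ is finite-dimensional.

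The only point requiring care is to confirm that~$I$ does not inadvertently annihilate any bracket we wish to keep nonzero. This is where restricting to class~$2$ pays off: fixing an ordering on the generators, the brackets $[a, b]$ with $a < b$ form a basis of $[V, V] \cong \wedge^2 V$, so the spanning set chosen for~$I$ is a subset of this basis and therefore is itself a basis of~$I$. Consequently the complementary basis elements $u_p \wedge v_p$, $p \in S$, remain linearly independent modulo~$I$, which is precisely what is needed.
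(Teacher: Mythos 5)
Your proof is correct and takes essentially the same route as the paper: the paper also works with the free Lie algebra modulo the third term of its lower central series (i.e., the free $2$-step nilpotent Lie algebra) on two homogeneous generators per degree and quotients by the commutators $[\mathfrak F_g,\mathfrak F_h]$ for $\{g,h\}\notin S$, which is exactly your ideal $I$. The only cosmetic difference is that you index the generators by the pairs in $S$ rather than by the elements of $G$, and you spell out the identification $[V,V]\cong\wedge^2V$ that makes the nonvanishing of the surviving brackets explicit.
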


\begin{proof}
Consider the free Lie algebra $\mathfrak F$ on the generators $x_g$ and $y_g$,
where~$g$ runs through $G$.
We can define a $G$-grading on $\mathfrak F$
by setting the $G$-degree of generators $x_g$ and $y_g$ to be $g$.
Given a set $S\subset\mathcal P_G$,
consider the ideal $I_S$ of $\mathfrak F$ generated by $\mathfrak F^3$ (the third term of the lower central series of $\mathfrak F$)
and the commutators $[\mathfrak F_g,\mathfrak F_h]$ for all $\{g,h\}\notin S$.
Then the quotient $\mathfrak L:=\mathfrak F/I_S$ is a $G$-graded Lie algebra,
whose second support is~$S$.
Moreover, if $G$ is finite, the algebra $\mathfrak L$ is finite-dimensional.
\end{proof}

\begin{remark}
Replacing ${\rm GL}(V)$ with the product $\prod_{g\in G}{\rm GL}(V_g)$,
one can define the concept of {\it degeneration of a $G$-graded Lie algebra},
which is different from graded degeneration defined above.
\end{remark}

\medskip\par\noindent
{\it Equivalences of graded contractions}.
Several natural equivalence relations can be introduced on the set of graded contractions.
For the generic setting, some of them already appeared in~\cite{mood1991a} and~\cite{weim1995a}.
There are also versions of these equivalences for a specific $G$-graded Lie algebra $\mathfrak L$.
%as discussed, e.g., in \cite[Definitions~2.4, 2.6, 2.11]{drap2024a}.We state these notions in the generic setting.

Recall that, for $G$-graded algebras $A$ and $B$,
an algebra isomorphism $\varphi\colon A\to B$ is an {\it equivalence} if it maps each homogeneous component of $A$ 
onto a homogeneous component of $B$: for any $g\in G$ there is $h\in G$ such that
$\varphi\left(A_g\right)=B_h$,
and $\varphi$ is a {\it graded isomorphism} if $\varphi\left(A_g\right)=B_g$ for all $g\in G$.

\begin{definition}[\cite{drap2024a}]\label{def:EquivalentContractions}
For a particular $G$-graded Lie algebra $\mathfrak L$,
two graded contractions $\varepsilon$ and $\varepsilon'$
are called {\it equivalent} (resp. {\it strongly equivalent})
if the contracted Lie algebras $\mathfrak L^\varepsilon$ and $\mathfrak L^{\varepsilon'}$
are equivalent (resp. graded isomorphic).
\end{definition}

The following is an analogue of strong equivalence for generic graded contractions:

\begin{definition}\label{def:StrongEquiv}
Two contractions $\varepsilon$ and $\varepsilon'$ are called {\it isomorphic},
written $\varepsilon\simeq\varepsilon'$,
if, for any $G$-graded Lie algebra~$\mathfrak L$,
the contracted Lie algebras $\mathfrak L^\varepsilon$ and $\mathfrak L^{\varepsilon'}$
are graded isomorphic.
\end{definition}

For arbitrary graded contractions $\varepsilon$ and $\varepsilon'$
one can define their product $\varepsilon\varepsilon'\colon G\times G\to\mathbb F$
to be simply the point-wise product of functions,
\[
(\varepsilon\varepsilon')(g,h):=\varepsilon(g,h)\varepsilon'(g,h).
\]
It is clear that~$\varepsilon\varepsilon'$ is also a graded contraction.
In particular, if $\varepsilon$ is a graded contraction,
then $\varepsilon\, d\alpha$ is a graded contraction for any function $\alpha\colon G\to\mathbb F^\times$.
This gives rise to the third notion of equivalence of graded contractions.

\begin{definition}[\cite{drap2024a}]\label{def:EquivViaNormalization}
Two graded contractions $\varepsilon$ and $\varepsilon'$ are called {\it equivalent via normalization},
written $\varepsilon\sim_n\varepsilon'$,
if there exists a map $\alpha\colon G\to\mathbb F^\times$ such that
$\varepsilon'=\varepsilon\, d\alpha$.
\end{definition}

Definition~\ref{def:EquivViaNormalization} can be rephrased as follows:
two maps $\varepsilon,\varepsilon'\colon G\times G\to\mathbb F$
satisfying the ``two-cocycle'' condition $(ii')$ of Proposition~\ref{prop:GenericContraction} are equivalent via normalization
if and only if they are cohomologous.
This provides us with the cohomological framework for the study of graded contractions,
which is discussed in more detail in Section~\ref{sec:GroupCohomology}.

The equivalence via normalization implies isomorphism,
$\varepsilon\sim_n\varepsilon'\Rightarrow\varepsilon\simeq\varepsilon'$.
The graded isomorphism $\varphi\colon\mathfrak L^{\varepsilon}\to\mathfrak L^{\varepsilon'}$
is given by
\[
\varphi(x)=\alpha(g)x\quad\forall x\in\mathfrak L_g
\] 
for all $G$-graded Lie algebras $\mathfrak L$.
The converse implication, $\varepsilon\simeq\varepsilon'\Rightarrow\varepsilon\sim_n\varepsilon'$, is conjectured in~\cite[Conjecture~2.15]{weim2006a}.
While $\mathfrak L^\varepsilon$ may be isomorphic to $\mathfrak L^{\varepsilon'}$, for specific $\mathfrak L$,
without $\varepsilon$ and $\varepsilon'$ being equivalent via normalization (see, for example,~\cite[Example~2.12]{drap2024a}),
we are not aware of counterexamples to this conjecture. 
Moreover, we show that a version of this conjecture holds
when the graded isomorphism for all $\mathfrak L$ is replaced by its functorial counterpart, i.e., 
{\it natural graded isomorphism}.
This is discussed in detail in Section~\ref{sec:EquivOfContractions}.

\section{Graded contractions via group cohomology}\label{sec:GroupCohomology}

In~\cite[Section~3]{mood1991a} Moody and Patera initiated the study of ``generic'' $G$-graded Lie algebra contractions,
which they defined as functions $\varepsilon\colon G\times G\to\mathbb F$
satisfying conditions $(i')$ and $(ii')$ of Proposition~\ref{prop:GenericContraction},
from the perspective of semigroup cohomology.
Indeed, since the field $\mathbb F$ is a semigroup with respect to multiplication,
it can be regarded as a $G$-semimodule with trivial action.
In view of conditions~$(i')$ and~$(ii')$,
a contraction~$\varepsilon$ is a symmetric two-cocycle with values in this $G$-semimodule,
moreover the equivalence via normalization
coincides with the relation of ``being cohomologous''.
Since cohomology theory is more developed for groups than semigroups,
we will follow a group-cohomology approach, which was inspired by the work of Weimar--Woods~\cite{weim2006a},
even though group cohomology does not appear there explicitly.

\begin{definition}\label{def:Support}
Let $G$ be an abelian group. We denote by $\mathcal P_G$ the set of unordered pairs of elements of~$G$, 
\[
\mathcal P_G:=\{\{g,h\}\subset G\}.
\]
If the group $G$ is finite,
the cardinality of $\mathcal P_G$ equals $|G|(|G|+1)/2$.
In view of condition $(i')$ of Proposition~\ref{prop:GenericContraction},
a $G$-graded contraction $\varepsilon$
can be considered as a function $\mathcal  P_G\to\mathbb F$.
In particular, the {\it support} of~$\varepsilon$ is the set
\[
S(\varepsilon):=\{\{g,h\}\in\mathcal P_G\mid \varepsilon(g,h)\ne0\}.
\]
The contraction $\varepsilon$ is said to be {\it without zeros} if $S(\varepsilon)=\mathcal P_G$,
and {\it with zeros} otherwise.

Conditions $(i')$ and $(ii')$ tell us that
a set $S\subset\mathcal P_G$ is a support of a contraction if and only if
$\{g,h\},\{gh,k\}\in S$ implies $\{h,k\},\{g,hk\}\in S$ for all $g,h,k\in G$.
We denote the set of all such subsets of $\mathcal P_G$ by $\mathcal S(G)$.
\end{definition}

Finding supports is an essential step in the classification of $G$-graded contractions.
While testing if a given subset $S$ of $\mathcal P_G$ belongs to $\mathcal S(G)$ is straightforward for 
finite $G$, finding all elements of the set $\mathcal S(G)$ is a challenging computational problem because of 
the growth rate of the size of the power set of $\mathcal P_G$ as a function of $N:=|G|$.

To find the set $\mathcal S(G)$, we develop an
approach based on some elementary facts from commutative algebra
and affine algebraic geometry.
It relies on the efficient algorithms of computing Gr\"obner bases and taking quotient rings
in the computer algebra system {\sc Singular}~\cite{deck2024A}.
We discuss this algorithm in detail in Section~\ref{sec:VarietyOfContractions}.

\begin{remark}\label{rem:MeetSemilattice}
The set $\mathcal S(G)$ is closed with respect to taking intersections,
i.e., if $S_1,S_2\in\mathcal S(G)$, then the intersection $S_1\cap S_2$ also belongs to $\mathcal S(G)$.
Thus, $\mathcal S(G)$ is a meet semilattice with respect to inclusion.
This observation suggests addressing the problem from the perspective of the semilattice structure of $\mathcal S(G)$.
Indeed, given all elements of $\mathcal S(G)$ that are maximal with the property of not containing a fixed element of $\mathcal P_G$, 
all other elements of $\mathcal S_G$ can be generated by taking all possible intersections.
This approach avoids solving polynomial equations that define contractions.
\end{remark}

Let $\mathcal F_G$ be the free abelian group, written multiplicatively, on the alphabet $\mathcal P_G$,
and let $\mathcal R_G$ be the set of relators of the form
\[
\{g,h\}\{gh,k\}\{h,k\}^{-1}\{g,hk\}^{-1},
\]
for all $g,h,k\in G$.
Any contraction with support $\mathcal P_G$ (i.e., without zeros) defines a unique homomorphism $\mathcal F_G\to\mathbb F^\times$.
Under this homomorphism, the elements of the subgroup $\langle\mathcal R_G\rangle$ are mapped to $1\in\mathbb F$,
so we will call them the {\it higher-order identities}.
Thus, the group of contractions with support $\mathcal P_G$
may be identified with the group of multiplicative characters of the group $\mathcal F_G/\langle\mathcal R_G\rangle$,
${\rm Hom}_{\mathbb Z}(\mathcal F_G/\langle\mathcal R_G\rangle,\mathbb F^\times)$.
In what follows, we simply write ${\rm Hom}(A,B)$
for the group of abelian group homomorphisms $A\to B$.

Since we are interested in contractions with zeros,
it is important to note that every relation $r=1$ for $r\in\langle\mathcal R_G\rangle$
can be uniquely rewritten in the form $r_1=r_2$,
where $r_1$ and $r_2$ are monomials in two disjoint sets of generators (without using inverses).
We note that the degrees of these two monomials are necessarily equal,
because the homomorphism $\mathcal F_G\to\mathbb Z$ that maps each generator of $\mathcal F_G$ to $1$
sends all elements of $\mathcal R_G$ and hence the subgroup $\langle\mathcal R_G\rangle$ to zero.

The following list of definitions is an interpretation of definitions from~\cite{weim2006a}
within the framework introduced above.

\begin{definition}\label{def:FreeGroupsFramework}
For any subset $S\subset\mathcal P_G$ let $\mathcal F_S$
be the free abelian group generated by $S$.
\begin{enumerate}\itemsep=0ex
\item 
The elements of the set $\mathcal R_S:=\mathcal R_G\cap\mathcal F_S$ are called {\it surviving defining relations}.
The elements of the subgroup $\langle\mathcal R_G\rangle\cap\mathcal F_S$ are called {\it surviving higher-order identities}.
 
\item 
A subset $B\subset\mathcal P_G$ is called {\it independent}
if its image under the projection
$\mathcal F_G\twoheadrightarrow\mathcal F_G/\langle\mathcal R_G\rangle$
is $\mathbb Z$-linearly independent.
		
\item 
A subset $B\subset S$ is called {\it independent} 
(resp. {\it quasi-independent}) {\it with respect to~$S$}
if the image of $B$ under the projection
$\mathcal F_S\twoheadrightarrow\mathcal F_S/(\langle\mathcal R_G\rangle\cap\mathcal F_S)$
(resp. $\mathcal F_S\twoheadrightarrow\mathcal F_S/\langle\mathcal R_S\rangle$)
is $\mathbb Z$-linearly independent.
		
\item 
We denote by $N'$ (resp. $N''$) the free rank of the group
$\mathcal F_S/(\langle\mathcal R_G\rangle\cap\mathcal F_S)$
(resp. $\mathcal F_S/\langle\mathcal R_S\rangle$),
i.e., the cardinality of a maximal subset of~$S$
that is independent (resp. quasi-independent) with respect to~$S$.
\end{enumerate}
\end{definition}

\begin{remark}
In~\cite{weim2006a}, only supports of contractions, i.e., $S\in\mathcal S(G)$, were considered,
but the definitions make sense for any subset of $\mathcal P_G$. 
Also, in~\cite{weim2006a}, the relations $r_1=r_2$ of degrees $1$ and $2$
were excluded from higher-order identities
because, for $S\in\mathcal S(G)$, they belong to $\langle\mathcal R_S\rangle$,
i.e., follow from surviving defining relations.
Indeed, a surviving identity of degree $1$ has the form $\{g_1,h_1\}=\{g_2,h_2\}$,
where both sides are distinct and belong to~$S$.
Consider the group homomorphism $f\colon\mathcal F_G\to\mathbb ZG$ defined by $\{g,h\}\mapsto g+h-gh$.
Since~$f$ sends $\mathcal R_G$ to zero, it must kill all elements of $\langle\mathcal R_G\rangle$,
in particular $\{g_1,h_1\}\{g_2,h_2\}^{-1}$, so we have $g_1+h_1-g_1h_1=g_2+h_2-g_2h_2$.
This implies that the degree~$1$ identity has the form $\{e,e\}=\{e,g\}$ with both sides belonging to $S$,
so it follows from the surviving defining relation $\{e,e\}\{e,g\}=\{e,g\}\{e,g\}$.
Similar considerations work for degree $2$ showing
that the surviving degree $2$ identities are in fact surviving defining relations.

%For a degree $2$ relation, $\{g_1,h_1\}\{k_1,l_1\}=\{g_2,h_2\}\{k_2,l_2\}$,
%consider the group homomorphism $\mathcal F_G\to\mathbb ZG$
%defined by $\{g,h\}\mapsto g+h-gh$.

%\todo $g_1+h_1-g_1h_1+k_1+l_1-k_1l_1=g_2+h_2-g_2h_2+k_2+l_2-k_2l_2$
\end{remark}

The inclusion $\langle\mathcal R_S\rangle \hookrightarrow\langle\mathcal R_G\rangle\cap\mathcal F_S$
induces a surjection $\mathcal F_S/\langle \mathcal R_S\rangle\twoheadrightarrow\mathcal F_S/(\langle\mathcal R_G\rangle\cap\mathcal F_S)$.
This gives us that $N''\geqslant N'$.
Since the free rank of an abelian group $A$ is the dimension of the $\mathbb Q$-vector space $A\otimes_{\mathbb Z}\mathbb Q$, 
the short exact sequence of abelian groups
\begin{gather*}
0\rightarrow
(\langle\mathcal R_G\rangle\cap \mathcal F_S)/\langle\mathcal R_S\rangle\rightarrow
\mathcal F_S/\mathcal \langle\mathcal R_S\rangle\rightarrow
\mathcal F_S/(\mathcal \langle\mathcal R_G\rangle\cap\mathcal F_S)\rightarrow
0
\end{gather*}
implies the following equality:
\[
\mathop{\rm rank}\big(\mathcal F_S/\langle\mathcal R_S\rangle\big)
=\mathop{\rm rank}\big(\mathcal F_S/(\langle\mathcal R_G\rangle\cap\mathcal F_S)\big)
+\mathop{\rm rank}\big((\langle\mathcal R_G\rangle\cap\mathcal F_S)/\langle\mathcal R_S\rangle\big),
\]
hence $\mathop{\rm rank}\big((\langle\mathcal R_G\rangle\cap\mathcal F_S)/\langle\mathcal R_S\rangle\big)=N''-N'$ 
if $G$ is finite.

For any abelian group $A$, viewed as a trivial $G$-module,
the set of symmetric two-cocycles ${\rm Z}^2_{\rm sym}(G,A)$ 
with values in $A$ can be identified with ${\rm Hom}(\mathcal F_G/\langle\mathcal R_G\rangle,A)$.
It is well-known that 
\begin{equation}\label{eq:Ext}
{\rm Z}^2_{\rm sym}(G,A)/{\rm B}^2(G,A)\simeq {\rm Ext}(G,A),
\end{equation}
see Section~\ref{sec:PrelimGroupCohomology} for more details and references.
The group ${\rm Ext}(G,A)$ can be computed using the cochain complex ${\rm Hom}(C_{\bullet},A)$
for any free resolution $C_{\bullet}$ of the abelian group~$G$.

Consider the following complex of abelian groups:
\begin{gather*}
C_{\bullet}\colon\quad
C_3\to C_2\to C_1\to C_0\to0,
\end{gather*}
where $C_0:=G$, 
$C_1:=\mathbb ZG$,
$C_2:=\mathcal F_G$
and $C_3:=\mathbb Z(G\times G\times G)$
and the differential $\p_\bullet$ is defined as follows:
\begin{gather*}
\p_1\colon C_1\to C_0,\quad
\sum_{g\in G}n_gg\mapsto\prod_{g\in G}g^{n_g},
\\
\p_2\colon C_2\to C_1,\quad
\{g,h\}\mapsto g+h-gh,
\\
\p_3\colon C_3\to C_2,\quad
(g,h,k)\mapsto\{h,k\}\{g,hk\}\{g,h\}^{-1}\{gh,k\}^{-1}. 
\end{gather*}

\begin{lemma}\label{lem:CisExact}
The sequence $C_{\bullet}$ is exact, i.e., a free resolution of $C_0=G$.
\end{lemma}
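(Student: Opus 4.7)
The plan is to verify exactness of $C_\bullet$ at each of its three positions $C_0$, $C_1$, $C_2$ in turn; the first two are routine, and the third is the substantive step, which I would establish using the universal property of the quotient $\mathcal{F}_G/\langle\mathcal{R}_G\rangle$.

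For exactness at $C_0=G$, the map $\partial_1$ is surjective because $g=\partial_1(g)$ for every $g\in G$. For exactness at $C_1=\mathbb{Z}G$, direct computation gives $\partial_1\partial_2=0$, hence $\mathrm{im}\,\partial_2\subseteq\ker\partial_1$. For the reverse inclusion, I would observe that $\mathbb{Z}G/\mathrm{im}\,\partial_2$ is the abelian group presented by generators $[g]$ for $g\in G$ and relations $[g]+[h]=[gh]$, which is exactly the defining presentation of $G$ as an abelian group; hence $\mathbb{Z}G/\mathrm{im}\,\partial_2\cong G$ via the induced map, giving $\mathrm{im}\,\partial_2=\ker\partial_1$.

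The main step is exactness at $C_2=\mathcal{F}_G$. The inclusion $\mathrm{im}\,\partial_3\subseteq\ker\partial_2$ follows from direct computation. For the reverse, set $K:=\ker\partial_1$ and $F:=\mathcal{F}_G/\mathrm{im}\,\partial_3$, so that $\partial_2$ descends to a surjection $\bar\partial_2\colon F\twoheadrightarrow K$, and it suffices to show $\bar\partial_2$ is an isomorphism. By construction, $F$ corepresents the functor $M\mapsto Z^2_{sym}(G,M)$: a homomorphism $F\to M$ is precisely a symmetric function $G\times G\to M$ satisfying the 2-cocycle condition encoded in the relators $\partial_3(g,h,k)$. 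The plan is to show that $K$, equipped with the tautological symmetric 2-cocycle $c_0(g,h):=g+h-gh\in K$, also corepresents this functor; Yoneda then supplies an isomorphism $F\cong K$ sending the universal element $\bar c(g,h)=[\{g,h\}]\in F$ to $c_0(g,h)\in K$, which is precisely $\bar\partial_2$.

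To prove the universality of $c_0$, given any symmetric 2-cocycle $f\colon G\times G\to M$ (WLOG normalized so $f(e,g)=0$ for all $g$), I would construct the abelian extension $E_f:=M\times G$ with twisted addition $(m,g)+(m',h):=(m+m'+f(g,h),\,gh)$, where symmetry of $f$ gives commutativity and the cocycle condition gives associativity. The $\mathbb{Z}$-linear map $\psi\colon\mathbb{Z}G\to E_f$ defined by $\psi(g)=(0,g)$ then covers $\partial_1\colon\mathbb{Z}G\to G$, so it restricts to a homomorphism $\phi:=\psi|_K\colon K\to\ker(E_f\to G)\cong M$; a short calculation yields $\phi(c_0(g,h))=f(g,h)$. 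Uniqueness of $\phi$ is then immediate from exactness at $C_1$, which tells us that $K$ is generated as an abelian group by the elements $c_0(g,h)$. The main subtlety I anticipate is carefully handling the normalization of $f$ and verifying the arithmetic in $E_f$ (notably, identifying $\ker\pi$ with $M$ under the normalization); once these are in place, the proof concludes by Yoneda.
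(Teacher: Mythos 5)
Your proof is correct, but it takes a genuinely different route from the paper's. For exactness at $C_2$ the paper applies the exact faithful functor ${\rm Hom}(\cdot,\mathbb Q/\mathbb Z)$ (using that $\mathbb Q/\mathbb Z$ is an injective cogenerator of abelian groups) and reduces the claim to $Z^2_{sym}(G,\mathbb Q/\mathbb Z)=B^2(G,\mathbb Q/\mathbb Z)$, which follows from the cited classical isomorphism $Z^2_{sym}(G,A)/B^2(G,A)\simeq{\rm Ext}(G,A)$ together with the divisibility of $\mathbb Q/\mathbb Z$. You instead prove directly that $(\ker\p_1,\,c_0)$ corepresents $M\mapsto Z^2_{sym}(G,M)$ via the explicit abelian extension $M\times_f G$ and conclude by Yoneda; in effect you reprove the relevant half of the classical isomorphism from scratch rather than quoting it. Your argument is longer but self-contained and more elementary (no cogenerator trick, no appeal to the ${\rm Ext}$ identity), while the paper's is a two-line duality argument resting on standard homological facts. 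Two points in your sketch deserve to be written out: (1) the reduction to normalized cocycles --- an arbitrary symmetric $2$-cocycle $f$ differs from a normalized one by the coboundary of the constant function $\alpha\equiv f(e,e)$, and every coboundary $d\alpha$ is realized as $\tilde\alpha|_K\circ c_0$ where $\tilde\alpha\colon\mathbb ZG\to M$ is the linear extension of $\alpha$, so corepresentability for normalized cocycles does imply it for all of them; and (2) the uniqueness of $\phi$ genuinely uses exactness at $C_1$ (so that $\ker\p_1$ is generated by the elements $g+h-gh$), which you correctly note. Both are routine, and you flagged the first, so there is no gap.
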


\begin{proof}
The exactness at the terms $C_0$ and $C_1$ is clear.
To show exactness at $C_2$,
we can apply the exact faithful functor ${\rm Hom}(\cdot,\mathbb Q/\mathbb Z)$,
see, e.g., \cite[Chapter 2, Exercise 2.5.1]{weib1994A},
and see that the resulting sequence is exact at ${\rm Hom}(C_2,\mathbb Q/\mathbb Z)$,
i.e., ${\rm Z}^2_{\rm sym}(G,\mathbb Q/\mathbb Z)={\rm B}^2(G,\mathbb Q/\mathbb Z)$,
because, in view of~\eqref{eq:Ext}, the quotient ${\rm Z}^2_{\rm sym}(G,\mathbb Q/\mathbb Z)/{\rm B}^2(G,\mathbb Q/\mathbb Z)$
is ${\rm Ext}(G,\mathbb Q/\mathbb Z)$,
which is trivial for any~$G$ since $\mathbb Q/\mathbb Z$ is a divisible group.
\end{proof}

\begin{corollary}\label{cor:F_G/R_GIsFree}
$\mathcal F_G/\langle\mathcal R_G\rangle$ is a free abelian group of rank $\leqslant |G|$.
If $N:=|G|$ is finite then $\mathcal F_G/\langle\mathcal R_G\rangle$ has rank $N$.
\end{corollary}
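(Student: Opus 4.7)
The plan is to extract the statement directly from the exact complex $C_{\bullet}$ of Lemma~\ref{lem:CisExact}. By construction, the image of $\p_3$ is precisely the subgroup $\langle \mathcal R_G\rangle$ of $\mathcal F_G$, so exactness at $C_2$ gives $\langle\mathcal R_G\rangle=\ker(\p_2)$. The first isomorphism theorem then identifies
\[
\mathcal F_G/\langle \mathcal R_G\rangle \;\cong\; \operatorname{im}(\p_2) \;\subseteq\; \mathbb Z G.
\]

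From here, the bound on the rank is automatic: $\mathbb Z G$ is the free abelian group on the set $G$, and every subgroup of a free abelian group is again free abelian of rank at most the cardinality of the basis. Applying this to $\operatorname{im}(\p_2)$ yields freeness of $\mathcal F_G/\langle\mathcal R_G\rangle$ together with the inequality $\operatorname{rank}(\mathcal F_G/\langle\mathcal R_G\rangle)\leqslant|G|$.

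To pin down the rank when $N:=|G|$ is finite, I would use the remaining piece of exactness, namely $\operatorname{im}(\p_2)=\ker(\p_1)$, and run the short exact sequence
\[
0\to\ker(\p_1)\to\mathbb Z G\to G\to 0
\]
through the exact functor $-\otimes_{\mathbb Z}\mathbb Q$. Since $G$ is finite, $G\otimes_{\mathbb Z}\mathbb Q=0$, so $\ker(\p_1)\otimes_{\mathbb Z}\mathbb Q\cong\mathbb Z G\otimes_{\mathbb Z}\mathbb Q\cong\mathbb Q^N$, giving $\operatorname{rank}(\ker(\p_1))=N$, as required.

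There is no real obstacle: once Lemma~\ref{lem:CisExact} is available, the argument is purely bookkeeping — identify the quotient with a subgroup of $\mathbb Z G$, invoke freeness of subgroups of free abelian groups, and read off the rank by rationalizing the short exact sequence at $C_1$.
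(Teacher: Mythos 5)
Your proposal is correct and follows essentially the same route as the paper: both identify $\mathcal F_G/\langle\mathcal R_G\rangle$ with $\operatorname{im}\p_2\subseteq\mathbb ZG$ via exactness at $C_2$, invoke freeness of subgroups of free abelian groups for the rank bound, and for finite $G$ use that the cokernel $\mathbb ZG/\operatorname{im}\p_2\simeq G$ is finite (your tensoring with $\mathbb Q$ is just a slightly more explicit way of saying this).
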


\begin{proof}
By Lemma~\ref{lem:CisExact}, $\langle\mathcal R_G\rangle=\mathop{\rm im}\p_3=\ker\p_2$,
so $\p_2$ induces an embedding $\mathcal F_G/\langle\mathcal R_G\rangle\hookrightarrow C_1$.
Since $C_1$ is a free abelian group of rank $N$, it follows that $\mathcal F_G/\langle\mathcal R_G\rangle$ is 
a free abelian group of rank $\leqslant N$.
If $G$ is finite, then the rank of $\mathcal F_G/\langle\mathcal R_G\rangle$ is equal to $N$,
since the cokernel of $\p_2$ is finite (isomorphic to~$G$).
\end{proof}

Now consider a subset $S\subset \mathcal P_G$
and the corresponding groups $\mathcal F_S$, $\mathcal R_S$ and $\langle\mathcal R_G\rangle\cap\mathcal F_S$
as in Definition~\ref{def:FreeGroupsFramework}.
Recall that the free ranks of the quotient groups
$\mathcal F_S/\langle\mathcal R_S\rangle$ and $\mathcal F_S/(\langle\mathcal R_G\rangle\cap\mathcal F_S)$
are denoted by $N''$ and $N'$, respectively.
Since the group $\mathcal F_S/(\langle\mathcal R_G\rangle\cap\mathcal F_S)$
is isomorphic to the image of $\mathcal F_S$ under the quotient map $\mathcal F_G\to\mathcal F_G/\langle\mathcal R_G\rangle$,
it is a free abelian group of rank $N'\leqslant N$.

\begin{remark}\label{eq:F_S/R_SIsFree}
It is claimed in~\cite[Theorem~6.5]{weim2006a} that
if $S$ belongs to $\mathcal S(G)$ then the group $(\langle\mathcal R_G\rangle\cap\mathcal F_S)/\langle\mathcal R_S\rangle$
is also free (of rank $Q=N''-N'$), but the proof appears to be incorrect
(in the last paragraph of the proof, a function taking values in roots of unity
is treated as a constant).
Since we know that $\mathcal F_S/(\langle\mathcal R_G\rangle\cap\mathcal F_S)$ is free,
the freeness  of $(\langle\mathcal R_G\rangle\cap\mathcal F_S)/\langle\mathcal R_S\rangle$ is equivalent
to the freeness of $\mathcal F_S/\langle\mathcal R_S\rangle$.
We carried out numerical experiments for small groups~$G$ and verified that the group $\mathcal F_S/\langle\mathcal R_S\rangle$
is free not only for $S\in\mathcal S(G)$, but for all subsets $S$ of $\mathcal P_G$.
\end{remark}

For $S\in\mathcal S(G)$, the significance of the group $\mathcal Z_S:=\mathcal F_S/\langle\mathcal R_S\rangle$
is that the group of multiplicative characters ${\rm Hom}(\mathcal Z_S,\mathbb F^\times)$  
is isomorphic to the group of contractions whose support is $S$.
Indeed, given $\gamma\in{\rm Hom}(\mathcal Z_S,\mathbb F^\times)$,
we define $\varepsilon\colon G\times G\to\mathbb F$ by
\[
\varepsilon(g,h):=
\begin{cases}
\gamma(\{g,h\})&\mbox{if}\ \{g,h\}\in S,\\
0              &\mbox{otherwise}.
\end{cases}
\]
Condition $(i')$ of Proposition~\ref{prop:GenericContraction} is obviously satisfied.
Relations $\mathcal R_S$ imply that condition $(ii')$ is satisfied when both sides are nonzero
(there are no ``strong violations'' in the terminology of~\cite{weim2006a}),
while the fact that $S\in\mathcal S(G)$ implies that the sides of $(ii')$
are either both zero or both nonzero
(there are no ``weak violations'' in the terminology of~\cite{weim2006a}).
Conversely, every graded contraction with support $S$ has the above form.

The group of equivalence classes of these contractions is isomorphic
to the quotient by the image of ${\rm B}^2(G,\mathbb F^\times)$ under the restriction map
${\rm Fun}(\mathcal P_G,\mathbb F^\times)
\to{\rm Fun}(S,\mathbb F^\times)$,
where ${\rm Fun}(X,Y)$ denotes the set of functions with domain $X$ and codomain $Y$,
which becomes a group under the point-wise operation whenever $Y$ is a group.
Since $\mathcal F_S$ is the free abelian group generated by $S$,
${\rm Fun}(S,\mathbb F^\times)$ can be identified with the group ${\rm Hom}(\mathcal F_S,\mathbb F^\times)$.
For any abelian group $A$, we denote by ${\rm B}^2_S(A)$ the image of ${\rm B}^2(G,A)$ in 
${\rm Hom}(\mathcal F_S,A)$, which lies in ${\rm Hom}(\mathcal Z_S,A)$.
We also denote
\[
{\rm H}^2_S(A):={\rm Hom}(\mathcal Z_S,A)/{\rm B}^2_S(A).
\]
Then, for $S\in\mathcal S(G)$, the group
${\rm H}^2_S(\mathbb F^\times)={\rm Hom}(\mathcal Z_S,\mathbb F^\times)/{\rm B}^2_S(\mathbb F^\times)$
classifies contractions with support $S$ up to equivalence via normalization.

We want to compare the groups $\mathcal Z_S$ and $\mathcal Z_G:=\mathcal F_G/\langle\mathcal R_G\rangle$.
Since the embedding $\mathcal F_S\hookrightarrow\mathcal F_G$ maps $\langle\mathcal R_S\rangle$ into $\langle\mathcal R_G\rangle$,
there exists a unique group homomorphism $f\colon\mathcal Z_S\to\mathcal Z_G$
making the following diagram commutative:
\begin{center}
\begin{tikzcd}
0\arrow[r]
&\langle\mathcal R_S\rangle\arrow[r]\arrow[d,hook]
&\mathcal F_S\arrow[r]\arrow[d,hook]
&\mathcal Z_S\arrow[r]\arrow[d,"f"]
&0
\\
0\arrow[r]
&\langle\mathcal R_G\rangle\arrow[r]
&\mathcal F_G\arrow[r]
&\mathcal Z_G\arrow[r]
&0 
\end{tikzcd}
\end{center}
Denote $\mathcal K_S:=\ker f$,
$\mathcal I_S:=\mathop{\rm im}f$ and 
$\mathcal C_S:=\mathop{\rm coker}f$.
Then
\begin{gather*}
\mathcal K_S=(\langle\mathcal R_G\rangle\cap\mathcal F_S)/\langle\mathcal R_S\rangle,
\\
\mathcal I_S=(\langle\mathcal R_G\rangle\mathcal F_S)/\langle\mathcal R_G\rangle
\simeq\mathcal F_S/(\langle\mathcal R_G\rangle\cap\mathcal F_S),
\\
\mathcal C_S\simeq\mathcal F_G/(\langle\mathcal R_G\rangle\mathcal F_S).
\end{gather*}

Recall that when the group $G$ has finite order $N$,
the group $\mathcal F_G$ is free abelian of rank $N(N+1)/2$.
The groups $\mathcal F_S$, $\langle\mathcal R_G\rangle$ and $\langle\mathcal R_S\rangle$
 are subgroups of $\mathcal F_G$
defined by explicit sets of generators.
Thus the groups $\langle\mathcal R_G\rangle\cap\mathcal F_S$, $\mathcal K_S$, $\mathcal I_S$ and $\mathcal C_S$
can be computed using the Smith canonical form of an integer matrix.

\begin{theorem}\label{thm:ClassificationOfContractions}
For any $S\subset\mathcal P_G$ and any abelian group $A$, we have two short exact sequences:
\begin{align}
\label{eq:SequenceH^2_S} 
0
\longrightarrow &\frac{{\rm Hom}(\mathcal I_S,A)}{{\rm B}^2_S(A)}
\longrightarrow {\rm H}^2_S(A)
\longrightarrow{\rm Hom}(\mathcal K_S,A)
\longrightarrow0,
\\ \label{eq:SequenceHom(I_S)/B^2}  
&{\rm Ext}(G,A)
\longrightarrow\frac{{\rm Hom}(\mathcal I_S,A)}{{\rm B}^2_S(A)}
\longrightarrow{\rm Ext}(\mathcal C_S,A)
\longrightarrow0, 
\end{align}
where the kernel of the map ${\rm Ext}(G,A)
\to{\rm Hom}(\mathcal I_S,A)/{\rm B}^2_S(A)$
consists of the classes of symmetric two-cocycles whose restriction to $S$ is in ${\rm B}^2_S(A)$.
Moreover,~\eqref{eq:SequenceH^2_S} is split.
\end{theorem}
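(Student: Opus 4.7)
The plan is to derive both exact sequences by applying ${\rm Hom}(-,A)$ to two short exact sequences of abelian groups read off from the diagram preceding the theorem, relying crucially on the fact that $\mathcal{Z}_G=\mathcal{F}_G/\langle\mathcal{R}_G\rangle$ is free abelian by Corollary~\ref{cor:F_G/R_GIsFree}. Throughout, I would use the observation that $B^2_S(A)\subset{\rm Hom}(\mathcal{I}_S,A)$: a coboundary $d\alpha$ is a homomorphism $\mathcal{F}_G\to A$ that factors through $\mathcal{Z}_G$, so its restriction to $\mathcal{F}_S$ automatically kills $\langle\mathcal{R}_G\rangle\cap\mathcal{F}_S$, and hence $\mathcal{K}_S$.

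For~\eqref{eq:SequenceH^2_S}, the first isomorphism theorem applied to $f\colon\mathcal{Z}_S\to\mathcal{Z}_G$ yields the short exact sequence $0\to\mathcal{K}_S\to\mathcal{Z}_S\to\mathcal{I}_S\to 0$. Since $\mathcal{I}_S$ is a subgroup of the free abelian group $\mathcal{Z}_G$, it is itself free, so this sequence splits. Applying ${\rm Hom}(-,A)$ yields a split exact sequence
\begin{gather*}
0\to{\rm Hom}(\mathcal{I}_S,A)\to{\rm Hom}(\mathcal{Z}_S,A)\to{\rm Hom}(\mathcal{K}_S,A)\to 0,
\end{gather*}
and because $B^2_S(A)$ lies entirely inside the direct summand ${\rm Hom}(\mathcal{I}_S,A)$, quotienting the middle term by it preserves both exactness and the splitting, producing~\eqref{eq:SequenceH^2_S}.

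For~\eqref{eq:SequenceHom(I_S)/B^2}, the definitions of $\mathcal{I}_S$ and $\mathcal{C}_S$ give the short exact sequence $0\to\mathcal{I}_S\to\mathcal{Z}_G\to\mathcal{C}_S\to 0$. The associated long exact sequence
\begin{gather*}
{\rm Hom}(\mathcal{Z}_G,A)\to{\rm Hom}(\mathcal{I}_S,A)\to{\rm Ext}(\mathcal{C}_S,A)\to{\rm Ext}(\mathcal{Z}_G,A)=0,
\end{gather*}
where the last term vanishes by freeness of $\mathcal{Z}_G$, identifies the middle arrow as surjecting onto ${\rm Ext}(\mathcal{C}_S,A)$ with kernel equal to the image of restriction from $\mathcal{Z}_G$. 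Identifying ${\rm Hom}(\mathcal{Z}_G,A)=Z^2_{sym}(G,A)$ and noting that restriction sends $B^2(G,A)$ into $B^2_S(A)$, I would pass to the quotient by coboundaries in both the left and middle terms; in view of~\eqref{eq:Ext}, this yields~\eqref{eq:SequenceHom(I_S)/B^2}. By construction, a class $[\varphi]\in{\rm Ext}(G,A)$ maps to zero precisely when its restriction to $S$ lies in $B^2_S(A)$.

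The argument is essentially formal diagram chasing once the freeness of $\mathcal{Z}_G$ is in hand. The one point that needs attention is the inclusion $B^2_S(A)\subset{\rm Hom}(\mathcal{I}_S,A)$, since without it neither the splitting in~\eqref{eq:SequenceH^2_S} nor the quotienting steps for~\eqref{eq:SequenceHom(I_S)/B^2} would be well-defined; this is the only non-tautological input and follows directly from the fact that coboundaries originate on $\mathcal{Z}_G$ rather than merely on $\mathcal{F}_S/\langle\mathcal{R}_S\rangle$.
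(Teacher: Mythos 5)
Your proposal is correct and follows essentially the same route as the paper's proof: both derive \eqref{eq:SequenceH^2_S} from the split sequence $0\to\mathcal K_S\to\mathcal Z_S\to\mathcal I_S\to0$ (split because $\mathcal I_S$ is free as a subgroup of the free group $\mathcal Z_G$ from Corollary~\ref{cor:F_G/R_GIsFree}), and \eqref{eq:SequenceHom(I_S)/B^2} from applying ${\rm Hom}(-,A)$ to $0\to\mathcal I_S\to\mathcal Z_G\to\mathcal C_S\to0$ with ${\rm Ext}(\mathcal Z_G,A)=0$, then quotienting by coboundaries via the identification ${\rm Hom}(\mathcal Z_G,A)/B^2(G,A)\simeq{\rm Ext}(G,A)$. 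Your explicit justification that $B^2_S(A)$ lands inside the summand ${\rm Hom}(\mathcal I_S,A)$ is a point the paper leaves implicit, but it is the same argument.
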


\begin{proof}
Consider the short exact sequence
$0\to\mathcal K_S\to\mathcal Z_S\;{\buildrel f\over\to}\;\mathcal I_S\to0$.
It splits since $\mathcal I_S$ is free, being a subgroup of $\mathcal Z_G$,
which is a free abelian group by Corollary~\ref{cor:F_G/R_GIsFree}. 
This gives the following split exact sequence:
\begin{equation}\label{eq:Sequence1}
0
\to{\rm Hom}(\mathcal I_S,A)
\to{\rm Hom}(\mathcal Z_S,A)
\to{\rm Hom}(\mathcal K_S,A)
\to0.
\end{equation}
%\todo Since ${\rm Ext}(\mathcal Z_G,A)=0$, the short exact sequence 
%$0\to\mathcal I_S\to\mathcal Z_G\to\mathcal C_S\to0$
%gives
The short exact sequence $0\to\mathcal I_S\to\mathcal Z_G\to\mathcal C_S\to0$
induces the long exact sequence in cohomology, see, e.g.,~\cite[Chapter~II.4]{macl1963A},
which, given ${\rm Ext}(\mathcal Z_G,A)=0$, takes the following form:
\begin{equation}\label{eq:Sequence2}
0
\to{\rm Hom}(\mathcal C_S,A)
\to{\rm Hom}(\mathcal Z_G,A)
\to{\rm Hom}(\mathcal I_S,A)
\to{\rm Ext}(\mathcal C_S,A)
\to0.
\end{equation}

The composition of ${\rm Hom}(\mathcal Z_G,A)\to{\rm Hom}(\mathcal I_S,A)$ from~\eqref{eq:Sequence2}
and ${\rm Hom}(\mathcal I_S,A)\to{\rm Hom}(\mathcal Z_S,A)$ from~\eqref{eq:Sequence1}  
is the map $f^*$,
which corresponds to the restriction of symmetric two-cocycles
from $\mathcal P_G$ to~$S$.
By definition, it maps ${\rm B}^2(G,A)$ onto ${\rm B}^2_S(A)$.
Recall that ${\rm Hom}(\mathcal Z_G,A)/{\rm B}^2(G,A)\simeq{\rm Ext}(G,A)$,
so sequence~\eqref{eq:Sequence2} gives us~\eqref{eq:SequenceHom(I_S)/B^2} .
Since ${\rm H}^2_S(A)={\rm Hom}(\mathcal Z_S,A)/{\rm B}^2_S(A)$,
sequence~\eqref{eq:Sequence1} gives us~\eqref{eq:SequenceH^2_S}.
\end{proof}

Note that for contractions without zeros,
Theorem~\ref{thm:ClassificationOfContractions} implies that ${\rm H}^2_S(A)={\rm Ext}(G,A)$ as expected, 
since in this case $S=\mathcal P_G$ and therefore $f$ is the identity map.

\begin{remark}\label{rm:I_S/B^2_S}
In the terminology of~\cite{weim2006a}, the subgroup ${\rm Hom}(\mathcal I_S,\mathbb F^\times)/{\rm B}^2_S(\mathbb F^\times)$ 
in sequence~\eqref{eq:SequenceH^2_S} corresponds to the classes of contractions with support $S$ 
that satisfy all surviving higher-order identities, 
i.e., take value $1$ when applied to the elements of the subgroup $\langle\mathcal R_G\rangle\cap\mathcal F_S$ of $\mathcal F_S$.
\end{remark}

The following corollary was obtained in the case $\mathbb F=\mathbb C$ in~\cite[Theorem~7.1]{weim2006a}.

\begin{corollary}\label{cor:AlgClosedH^2_S}
If $\mathbb F$ is algebraically closed then
${\rm H}^2_S(\mathbb F^\times)\simeq{\rm Hom}(\mathcal K_S,\mathbb F^\times)$.
\end{corollary}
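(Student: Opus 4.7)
The plan is to derive this directly from the two exact sequences in Theorem~\ref{thm:ClassificationOfContractions} by exploiting a special property of $\mathbb{F}^\times$ when $\mathbb{F}$ is algebraically closed, namely that it is a divisible abelian group.

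First I would observe that since $\mathbb{F}$ is algebraically closed, every nonzero element has $n$-th roots for every $n\geq1$, so the multiplicative group $\mathbb{F}^\times$ is divisible. In the category of abelian groups ($\mathbb{Z}$-modules), divisible groups are exactly the injective objects, hence ${\rm Ext}(B,\mathbb{F}^\times)=0$ for every abelian group $B$. In particular, both ${\rm Ext}(G,\mathbb{F}^\times)$ and ${\rm Ext}(\mathcal{C}_S,\mathbb{F}^\times)$ vanish.

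Next I would apply this vanishing to the sequence~\eqref{eq:SequenceHom(I_S)/B^2} with $A=\mathbb{F}^\times$: the left-hand term ${\rm Ext}(G,\mathbb{F}^\times)$ is zero, which makes the arrow into ${\rm Hom}(\mathcal{I}_S,\mathbb{F}^\times)/B^2_S(\mathbb{F}^\times)$ the zero map, while the right-hand term ${\rm Ext}(\mathcal{C}_S,\mathbb{F}^\times)$ is also zero. Since the map to ${\rm Ext}(\mathcal{C}_S,\mathbb{F}^\times)$ is surjective by~\eqref{eq:SequenceHom(I_S)/B^2}, the whole quotient ${\rm Hom}(\mathcal{I}_S,\mathbb{F}^\times)/B^2_S(\mathbb{F}^\times)$ must be zero.

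Finally, plugging this into the split short exact sequence~\eqref{eq:SequenceH^2_S} collapses the kernel to zero, so the surjection $H^2_S(\mathbb{F}^\times)\to{\rm Hom}(\mathcal{K}_S,\mathbb{F}^\times)$ is an isomorphism. No step is genuinely an obstacle here; the only subtlety is recognizing that divisibility of $\mathbb{F}^\times$ is precisely what is needed to make both ${\rm Ext}$ groups vanish simultaneously, which then forces the ``higher-order identity'' part of $H^2_S$ to disappear, leaving only the contribution from ${\rm Hom}(\mathcal{K}_S,\mathbb{F}^\times)$.
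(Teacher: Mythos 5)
Your proposal is correct and follows essentially the same route as the paper: divisibility of $\mathbb F^\times$ kills both ${\rm Ext}(G,\mathbb F^\times)$ and ${\rm Ext}(\mathcal C_S,\mathbb F^\times)$, so exactness of~\eqref{eq:SequenceHom(I_S)/B^2} forces ${\rm Hom}(\mathcal I_S,\mathbb F^\times)/B^2_S(\mathbb F^\times)=0$, and then~\eqref{eq:SequenceH^2_S} gives the isomorphism. One small phrasing point: the vanishing of the middle term follows because its kernel into ${\rm Ext}(\mathcal C_S,\mathbb F^\times)$ is the image of the zero map (exactness), so it injects into a trivial group --- surjectivity onto ${\rm Ext}(\mathcal C_S,\mathbb F^\times)$ is not the operative fact.
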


\begin{proof}
Since $\mathbb F^\times$ is a divisible group,
both ${\rm Ext}(G,\mathbb F^\times)$ and ${\rm Ext}(\mathcal C_S,\mathbb F^\times)$ are trivial,
so sequence~\eqref{eq:SequenceHom(I_S)/B^2} implies ${\rm Hom}(\mathcal I_S,A)/{\rm B}^2_S(A)=0$.
Consequently, sequence~\eqref{eq:SequenceH^2_S} gives the result.
\end{proof}

To treat the real case,
we recall some standard results from homological algebra. 
For any abelian group $H$ (which we write multiplicatively),
we use the following notation:
\[
H_{[n]}:=\{h\in H\mid h^n=e\}
\quad\mbox{and}\quad
H^{[n]}:=\{h^n\mid h\in H\}.
\]
Then ${\rm Ext}(H,\mathbb Z/n\mathbb Z)$ is isomorphic
to the dual group, ${\rm Hom}(H_{[n]},\mathbb Q/\mathbb Z)$, of the $n$-periodic part of~$H$.
This can be seen, for example, from the injective resolution 
\[
\mathbb Z/n\mathbb Z\to\mathbb Q/\mathbb Z\to\mathbb Q/\mathbb Z\to1,
\]
where the map $\mathbb Q/\mathbb Z\to\mathbb Q/\mathbb Z$ is $x\mapsto nx$ (writing additively).
It gives that ${\rm Ext}(H,\mathbb Z/n\mathbb Z)$ is isomorphic to the quotient
${\rm Hom}(H,\mathbb Q/\mathbb Z)/{\rm Hom}(H,\mathbb Q/\mathbb Z)^{[n]}$,
which is in turn isomorphic to ${\rm Hom}(H_{[n]},\mathbb Q/\mathbb Z)$,
by applying the exact functor ${\rm Hom}(\cdot,\mathbb Q/\mathbb Z)$ to the exact sequence
\[
1\to H_{[n]}\to H\to H.
\]

If $\mathbb F$ is a real closed field (for example, $\mathbb R$), then the multiplicative group of $\mathbb F$ is the direct sum of $\{\pm1\}\simeq\mathbb Z/2\mathbb Z$ and the divisible group of positive elements $\mathbb F_{>0}$, hence 
the group ${\rm Ext}(H,\mathbb F^\times)$ is isomorphic to the dual group of $H_{[2]}$.
Then sequence~\eqref{eq:SequenceHom(I_S)/B^2} takes the form
\begin{equation}\label{eq:SequenceForSignInvar}
{\rm Hom}(G_{[2]},\{\pm1\})
\longrightarrow\frac{{\rm Hom}(\mathcal I_S,\mathbb F^\times)}{{\rm B}^2_S(\mathbb F^\times)}
\longrightarrow
{\rm Hom}((\mathcal C_{S})_{[2]},\{\pm1\})
\longrightarrow0.
\end{equation}

\begin{corollary}\label{cor:RealClosedH^2_S}
If $\mathbb F$ is real closed then
\begin{equation}\label{eq:RealContractionsClassification}
{\rm H}^2_S(\mathbb F^\times)\simeq
\frac{{\rm Hom}(\mathcal I_S,\{\pm1\})}{{\rm B}^2_S(\{\pm1\})}\oplus
{\rm Hom}(\mathcal K_S,\mathbb F^\times),
\end{equation} 
where ${{\rm Hom}(\mathcal I_S,\{\pm1\})}/{{\rm B}^2_S(\{\pm1\})}$ is an elementary 
abelian $2$-group. If $G$ is finite, then the rank of this group (i.e., dimension as a vector space 
over the field of $2$ elements) is at most $\mathop{\rm rank}G_{[2]}+N'$.
\end{corollary}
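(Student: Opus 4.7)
My plan is to exploit the machinery already built up in Theorem~\ref{thm:ClassificationOfContractions} together with the structural fact that, for a real closed field $\mathbb F$, we have a direct sum decomposition $\mathbb F^\times=\{\pm1\}\oplus\mathbb F_{>0}$ in which $\mathbb F_{>0}$ is divisible. First I would apply Theorem~\ref{thm:ClassificationOfContractions} with $A=\mathbb F^\times$: since sequence~\eqref{eq:SequenceH^2_S} is split, I immediately get
\[
H^2_S(\mathbb F^\times)\simeq\frac{{\rm Hom}(\mathcal I_S,\mathbb F^\times)}{B^2_S(\mathbb F^\times)}\oplus{\rm Hom}(\mathcal K_S,\mathbb F^\times),
\]
so the task reduces to identifying the first summand with $\mathrm{Hom}(\mathcal I_S,\{\pm1\})/B^2_S(\{\pm1\})$.

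Next, since ${\rm Hom}(\mathcal I_S,-)$, $B^2(G,-)$, and the restriction map defining $B^2_S$ all commute with the functor of taking direct sums in the coefficient group, the decomposition $\mathbb F^\times=\{\pm1\}\oplus\mathbb F_{>0}$ induces a corresponding decomposition of the quotient. The $\{\pm1\}$-summand is precisely $\mathrm{Hom}(\mathcal I_S,\{\pm1\})/B^2_S(\{\pm1\})$, so what I must show is that the $\mathbb F_{>0}$-summand vanishes. For this I would apply sequence~\eqref{eq:SequenceHom(I_S)/B^2} with $A=\mathbb F_{>0}$; since $\mathbb F_{>0}$ is divisible, both ${\rm Ext}(G,\mathbb F_{>0})$ and ${\rm Ext}(\mathcal C_S,\mathbb F_{>0})$ vanish, forcing the middle term to be zero. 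This is the only slightly non-routine step, and it is where the real-closedness hypothesis is genuinely used.

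Finally, the fact that the first summand is an elementary abelian $2$-group is immediate: the coefficient group $\{\pm1\}$ has exponent $2$, so ${\rm Hom}(\mathcal I_S,\{\pm1\})$ and any of its quotients have exponent $2$. For the rank bound when $G$ is finite, I would apply sequence~\eqref{eq:SequenceForSignInvar}: the middle term is a quotient of ${\rm Hom}(G_{[2]},\{\pm1\})$ modulo its kernel, extended by the cokernel ${\rm Hom}((\mathcal C_S)_{[2]},\{\pm1\})$. Hence its $\mathbb F_2$-dimension is at most $\mathop{\rm rank}G_{[2]}+\mathop{\rm rank}(\mathcal C_S)_{[2]}$. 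A quick Smith-normal-form argument applied to the embedding $\mathcal I_S\hookrightarrow\mathcal Z_G$ of free abelian groups of ranks $N'$ and $N$ shows that the torsion of $\mathcal C_S$ has at most $N'$ invariant factors, so $\mathop{\rm rank}(\mathcal C_S)_{[2]}\leqslant N'$, yielding the stated bound. I do not anticipate any real obstacle; the proof is essentially a clean assembly of the two exact sequences of Theorem~\ref{thm:ClassificationOfContractions}, the divisibility of $\mathbb F_{>0}$, and elementary structural facts about free abelian groups.
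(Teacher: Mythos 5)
Your proposal is correct and follows essentially the same route as the paper: split off ${\rm Hom}(\mathcal K_S,\mathbb F^\times)$ via the split sequence~\eqref{eq:SequenceH^2_S}, decompose $\mathbb F^\times=\{\pm1\}\oplus\mathbb F_{>0}$, eliminate the divisible summand, and bound the rank via~\eqref{eq:SequenceForSignInvar} together with the Smith-normal-form count of invariant factors of $\mathcal C_S=\mathcal Z_G/\mathcal I_S$. The only (harmless) variation is in how the $\mathbb F_{>0}$-summand is killed: you apply~\eqref{eq:SequenceHom(I_S)/B^2} with the divisible coefficient group $\mathbb F_{>0}$, exactly as in Corollary~\ref{cor:AlgClosedH^2_S}, whereas the paper first deduces from~\eqref{eq:SequenceForSignInvar} that the whole quotient has period at most $4$ and then notes that a divisible group of finite period is trivial; both arguments are valid.
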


\begin{proof}
%Since $G$ is finite and $\mathcal C_S$ is finitely generated, 
Since $G_{[2]}$ and $(\mathcal C_S)_{[2]}$ are elementary $2$-groups,
\eqref{eq:SequenceForSignInvar} implies that 
${\rm Hom}(\mathcal I_S,\mathbb F^\times)/{\rm B}^2_S(\mathbb F^\times)$ has period at most $4$.
On the other hand, using the direct sum decomposition of $\mathbb F^\times$, we get that
\[
\frac{{\rm Hom}(\mathcal I_S,\mathbb F^\times)}{{\rm B}^2_S(\mathbb F^\times)}\simeq
\frac{{\rm Hom}(\mathcal I_S,\{\pm1\})}{{\rm B}^2_S(\{\pm1\})}\oplus
\frac{{\rm Hom}(\mathcal I_S,\mathbb F_{>0})}{{\rm B}^2_S(\mathbb F_{>0})}.
\]
%Since $\mathbb F_{>0}$ is an abelian group with unique division (i.e., a $\mathbb Q$-vector space), 
Since $\mathcal I_{S}$ is a free abelian group, 
the second summand above is a divisible group and hence cannot have finite period unless it is trivial.
Therefore, 
\[
{\rm Hom}(\mathcal I_S,\mathbb F^\times)/{\rm B}^2_S(\mathbb F^\times)\simeq
\frac{{\rm Hom}(\mathcal I_S,\{\pm1\})}{{\rm B}^2_S(\{\pm1\})}
\]
is an elementary 2-group.
If $N:=|G|$ is finite then, in view of sequence~\eqref{eq:SequenceForSignInvar},
the rank of this group is at most $\mathop{\rm rank}G_{[2]}+N'$,
because $\mathcal C_S$ is isomorphic to the quotient of the free abelian group $\mathcal Z_G=\mathcal F_G/\langle\mathcal R_G\rangle$ of rank $N$ by its subgroup $\mathcal I_S\simeq\mathcal F_S/(\langle\mathcal R_G\rangle\cap\mathcal F_S)$ of rank $N'$.
The split sequence~\eqref{eq:SequenceH^2_S} completes the proof.
\end{proof}

\looseness=-1
\begin{remark}\label{rem:SignInvariants}
Recall from Remark \ref{rm:I_S/B^2_S} that, in the terminology of~\cite{weim2006a},
the first summand in equation~\eqref{eq:RealContractionsClassification}
corresponds to the classes of contractions with support $S$ that satisfy all surviving higher-order identities.
Such classes are described, for $\mathbb F=\mathbb R$, in~\cite[Theorem~7.1]{weim2006a}
in terms of ``sign invariants''.
In sequence~\eqref{eq:SequenceForSignInvar},
the quotient ${\rm Hom}((\mathcal C_S)_{[2]},\{\pm1\})$
corresponds to the ``sign invariants of the first kind'' as follows.
Fix a set-theoretic section $\xi$ of the projection $\mathcal Z_G\to\mathcal C_S$. 
Given a contraction~$\varepsilon$ that satisfies all surviving higher-order identities,
i.e., $\varepsilon\in{\rm Hom}(\mathcal I_S,\mathbb R^\times)$,
we define the two-cocycle
$\alpha_\varepsilon\in {\rm Z}^2_{\rm sym}(\mathcal C_S,\{\pm1\})$ by
\[
\alpha_\varepsilon(c_1,c_2):=\sgn\big(\varepsilon(\xi(c_1)\xi(c_2)\xi(c_1c_2)^{-1})\big),
\quad
c_1,c_2\in\mathcal C_S,
\]
where $\sgn$ denotes the usual sign function,
which is the projection $\mathbb R^\times\twoheadrightarrow\{\pm1\}$.
The class $[\alpha_\varepsilon]\in{\rm Ext}(\mathcal C_S,\{\pm1\})$
is the image of $[\varepsilon]$ under the homomorphism 
$
{\rm Hom}(\mathcal I_S,\mathbb R^\times)/{\rm B}^2_S(\mathbb R^\times)
\to{\rm Ext}(\mathcal C_S,\{\pm1\})
$
from sequence~\eqref{eq:SequenceHom(I_S)/B^2}.
The corresponding element $\delta_\varepsilon\in{\rm Hom}((\mathcal C_S)_{[2]},\{\pm1\})$
is given by
\[
\delta_\varepsilon(c):=\alpha_\varepsilon(c,c)\alpha_\varepsilon(e,e)=\sgn\big(\varepsilon(\xi(c)^2)\big),
\quad
c\in\mathcal (\mathcal C_S)_{[2]}.
\]
The values of $\delta_\varepsilon$ are the ``sign invariants of the first kind'' assigned to $\varepsilon$.
\end{remark}

\section{Variety of graded contractions}\label{sec:VarietyOfContractions}

Now we approach $G$-graded contractions from the perspective of commutative algebra and affine algebraic geometry.
More specifically, we consider condition $(ii')$ of Proposition~\ref{prop:GenericContraction}
as a relation in the ring of polynomials $\mathbb F[x_i\mid i\in\mathcal P_G]$,
whose variables are labeled, in view of condition $(i')$, by the set $\mathcal P_G$ of unordered pairs $\{g,h\}\subset G$.
To avoid technical issues, the ground field $\mathbb F$ in this section is assumed to be algebraically closed.

Recall that Zariski topology on the affine space $\mathbb F^n$, $n\in\mathbb N$,
is defined by taking as closed sets all {\it algebraic sets},
i.e., a set $V\subset\mathbb F^n$ is Zariski closed if it is the set 
of common zeros of a collection of polynomials from $R:=\mathbb F[x_1,\dots,x_n]$.
These polynomials generate an ideal $J$ of $R$, and $V$ is the vanishing set of $J$,
$V(J):=\{x\in\mathbb F^n\mid f(x)=0\ \forall f\in I\}$.
Conversely, for any set $S\subset\mathbb F^n$, we can consider its vanishing ideal,
$I(S):=\{f\in R\mid f(x)=0\ \forall x\in S\}$.
The set $V(I(S))$ is the Zariski closure of $S$,
%Moreover, by Hilbert's Basis Theorem, the ideal $I$ is finitely generated.
%By declaring affine algebraic varieties to be closed sets, the space $\mathbb F^n$ is endowed with the Zariski topology.
and the cornerstone result in affine algebraic geometry, Hilbert's Nullstellensatz,
says that, for any ideal $J$ of $R$, $I(V(J))=\sqrt J$,
where the ideal $\sqrt J:=\{f\in R\mid \exists n\in\mathbb N\colon f^n\in J\}$
is called the {\it radical} of $J$.
Thus we have a one-to-one correspondence between Zariski closed sets and {\it radical ideals} $I$ of the polynomial ring $R$,
i.e., the ideals satisfying $I=\sqrt I$.

By definition, an affine algebraic variety $X$ is isomorphic to a Zariski closed set in the affine space $\mathbb F^n$
for some $n\in\mathbb N$, i.e., $X\simeq V(J)$ for some ideal $J\subset R$.
The geometric properties of $X$ can be studied through the algebra of polynomial functions restricted to $X$,
known as the {\it algebra of regular functions} and denoted $\mathbb F[X]:=R/\sqrt J$.
For more details, see standard references like~\cite{cox2015A,hart1977A}.

\medskip\par\noindent
{\it Cellular decomposition of the variety of graded contractions.}
Let $G$ be a finite abelian group and consider the affine space $\mathbb A_G$,
whose coordinates are indexed by the set $\mathcal P_G$,
so the coordinate algebra is $\mathbb F[\mathbb A_G]=\mathbb F[x_i\mid i\in\mathcal P_G]$.
Then the set $X$ of all $G$-graded contractions can be regarded as a subset of $\mathbb A_G$.
Moreover, $X$ is an affine algebraic variety determined by the equations
\begin{equation}\label{eq:Relations}
x_{\{g,h\}}x_{\{gh,k\}}-x_{\{h,k\}}x_{\{g,hk\}}=0\quad
\mbox{for all}\quad
g,h,k\in G.
\end{equation}
Let $I\subset\mathbb F[\mathbb A_G]$ be the ideal generated by the left-hand sides  of the above equations.
This ideal is {\it binomial},
meaning it is generated by polynomials with two terms.
Binomial ideals have a number of remarkable properties,
for instance, the radical and the associated primes of a binomial ideal are again binomial.
Eisenbud and Sturmfels~\cite{eise1996a} provide a detailed study of these and other properties
and characterize algebraic sets defined by binomial ideals in terms of their cellular decomposition,
see~\cite[Theorem~4.2]{eise1996a}.
We will subsequently study the cellular decomposition of the variety~$X$ of graded contractions.
It should be noted that the ideal $I$ is usually not radical,
which can be verified using {\sc Singular} computer algebra system~\cite{deck2024A}.
For instance, $I$ is radical when $G$ is of order $2$, $3$, $4$ or $5$, but not $6$. 

For any subset $ S\subset\mathcal P_G$,
we consider the affine subspace $\mathbb A_S:=\{x\in\mathbb A_G\colon x_i\ne0~\forall i\in S\}\subset\mathbb A_G$
and the torus ${\rm T}_ S:=\{x\in\mathbb A_S\colon x_i\ne0~\forall i\in S\}$,
which is a principal open set in $\mathbb A_S$.
Since $x\in{\rm T}_S$ if and only if $\mathop{\rm supp}x= S$,
where $\mathop{\rm supp}x=\{i\in\mathcal P_G\colon x_i\ne0\}$,
we have a partition $\mathbb A_S=\bigcup_{S'\subset S}{\rm T}_{S'}$.
The tori ${\rm T}_S$ are called {\it coordinate cells} in~\cite{eise1996a}.
The intersection $X\cap{\rm T}_S$ consists precisely of the graded contractions
whose support is $S$;
it is nonempty if and only if $S\in\mathcal S(G)$.
It follows that we have the following partition of $X$:
\[
X=\bigcup_{S\in\mathcal S(G)}X_S,\quad
X_S:=X\cap{\rm T}_S.
\]

In particular,  $X_{\mathcal P_G}=X\cap {\rm T}_{\mathcal P_G}={\rm Z}^2_{\rm sym}(G,\mathbb F^\times)$ is a nonempty principal open set in $X$.
Consider the torus ${\rm T}_G:={\rm Fun}(G,\mathbb F^\times)$
and its action on the space $\mathbb A_G$ defined by
\begin{equation*}
\alpha\cdot(x_{\{g,h\}})_{\{g,h\}\in\mathcal P_G}=\big(d\alpha(g,h)\,x_{\{g,h\}}\big)_{\{g,h\}\in\mathcal P_G},
\end{equation*}
where recall that $d\alpha(g,h)=\alpha(g)\alpha(h)/\alpha(gh)$.
In other words, ${\rm T}_G$ acts through the homomorphism $d\colon{\rm T}_G\to {\rm T}_{\mathcal P_G}$,
$\alpha\mapsto d\alpha$. 
This action leaves $X$ and each cell $X_S$ invariant;
moreover, the group ${\rm H}^2_S(\mathbb F^\times)$, which was introduced in the previous section,
is in bijection with the set of orbits of $X_S$ under this action.
Note that the ${\rm T}_G$-orbit of the point $\bar 1:=(1,\dots,1)$
is ${\rm B}^2(G,\mathbb F^\times)={\rm Z}^2_{\rm sym}(G,\mathbb F^\times)=X_{\mathcal P(G)}$.

\begin{proposition}\label{prop:X(1)Irred}
The closure of the ${\rm T}_G$-orbit of $\bar 1$, which we denote by $X(\bar 1)$,
is the unique irreducible component of $X$ containing $\bar 1$.
Moreover, the dimension of $X(\bar 1)$ is $N=|G|$. %points of the orbit are regular in $X(\bar 1)$.
\end{proposition}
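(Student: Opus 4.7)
The plan is to exhibit $X_{\mathcal P_G}$ itself as the ${\rm T}_G$-orbit of $\bar 1$, observe that it is an irreducible smooth open subvariety of $X$, and then deduce both assertions by standard arguments about irreducible components and Zariski-local smoothness.

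First, since $\mathbb F$ is algebraically closed, $\mathbb F^\times$ is divisible, so ${\rm Ext}(G,\mathbb F^\times)=0$ and hence $B^2(G,\mathbb F^\times)=Z^2_{sym}(G,\mathbb F^\times)=X_{\mathcal P_G}$; as noted just before the proposition, the ${\rm T}_G$-orbit of $\bar 1$ equals $B^2(G,\mathbb F^\times)$, so this orbit coincides with the principal open subset $X_{\mathcal P_G}=X\setminus V\bigl(\prod_{i\in\mathcal P_G}x_i\bigr)$ of $X$. Next, using the identification $Z^2_{sym}(G,\mathbb F^\times)\simeq{\rm Hom}(\mathcal Z_G,\mathbb F^\times)$ from Section~\ref{sec:GroupCohomology} together with Corollary~\ref{cor:F_G/R_GIsFree}, one obtains $X_{\mathcal P_G}\simeq(\mathbb F^\times)^N$ as algebraic varieties, an irreducible smooth algebraic torus. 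Therefore $X(\bar 1)=\overline{X_{\mathcal P_G}}$ is irreducible.

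For the uniqueness statement, let $Y$ be any irreducible component of $X$ containing $\bar 1$. Since $X_{\mathcal P_G}$ is open in $X$, the intersection $Y\cap X_{\mathcal P_G}$ is a nonempty open subset of $Y$ and therefore dense in $Y$, so $Y\subseteq\overline{X_{\mathcal P_G}}=X(\bar 1)$; as $X(\bar 1)$ is irreducible and $Y$ is an irreducible component, this forces $Y=X(\bar 1)$. For regularity, smoothness is local in the Zariski topology, so the smoothness of $X_{\mathcal P_G}$, viewed as an open subvariety of $X(\bar 1)$, implies that every point of $X_{\mathcal P_G}$ is regular in $X(\bar 1)$.

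The only mildly delicate point is to check that the identification of $X_{\mathcal P_G}$ with the torus ${\rm Hom}(\mathcal Z_G,\mathbb F^\times)$ is really an isomorphism of algebraic varieties, not merely of abstract groups. This follows from the observation that the equations~\eqref{eq:Relations}, restricted to the open torus ${\rm T}_{\mathcal P_G}$, are binomial and multiplicative in the coordinates and hence cut out precisely the algebraic subgroup ${\rm Hom}(\mathcal F_G/\langle\mathcal R_G\rangle,\mathbb F^\times)$ of $(\mathbb F^\times)^{\mathcal P_G}$, whose character lattice $\mathcal Z_G$ is free of rank $N$ by Corollary~\ref{cor:F_G/R_GIsFree}. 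The remaining arguments are routine.
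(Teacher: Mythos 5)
Your proof is correct, and its overall skeleton matches the paper's: both arguments hinge on the observation that the ${\rm T}_G$-orbit of $\bar 1$ coincides with the open cell $X_{\mathcal P_G}$, so that its (irreducible) closure swallows every irreducible component through $\bar 1$ by the standard density argument. Where you diverge is in how irreducibility and regularity are obtained. The paper gets irreducibility for free from the fact that the orbit is the image of the irreducible torus ${\rm T}_G$ under the orbit map, and it disposes of regularity by citing the general properties of orbit closures (Borel's closed orbit lemma, recalled in Section~\ref{sec:ContAndDegenOfLieAlgs}). You instead identify the orbit explicitly as the diagonalizable group ${\rm Hom}(\mathcal F_G/\langle\mathcal R_G\rangle,\mathbb F^\times)\simeq(\mathbb F^\times)^N$ via Corollary~\ref{cor:F_G/R_GIsFree} and deduce smoothness directly, using that regularity is a Zariski-local property. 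This is more self-contained and has the side benefit of exhibiting $\dim X(\bar 1)=N$ and the reducedness of the cell, at the cost of being special to this particular orbit rather than an instance of a general fact about orbits of algebraic groups; your closing remark that the binomial equations cut out exactly the character group of the free lattice $\mathcal Z_G$ correctly addresses the one point that needed care.
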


\begin{proof}
Since~${\rm T}_G$ is irreducible, so is~$X(\bar 1)$,
hence it is contained in an irreducible component of~$X$.
But it contains an open neighborhood of~$\bar 1$ in~$X$,
so it must contain any irreducible component of~$X$ passing through~$\bar 1$.

Since $\dim {\rm T}_G=N$ and the stabilizer of $\bar1$, ${\rm Hom}(G,\mathbb F^\times)$, is finite, 
the second assertion follows from the fact that the ${\rm T}_G$-orbit of $\bar 1$
is open in its closure, $X(\bar 1)$, see Section~\ref{sec:ContAndDegenOfLieAlgs}.
%is a general property of orbit closures,see discussion in Section~\ref{sec:ContAndDegenOfLieAlgs}.
\end{proof}

\begin{example}\label{ex:Z2}
Consider the grading group $G=\mathbb Z_2$.
The affine space $\mathbb A_{\mathbb Z_2}$ is coordinatized by the tuple $(x_{\{0,0\}},x_{\{1,1\}},x_{\{0,1\}})$
and its coordinate algebra is $\mathbb C[x_{\{0,0\}},x_{\{1,1\}},x_{\{0,1\}}]$.
The variety of contractions is the zero locus of the polynomials
\[
x_{\{0,0\}}x_{\{0,1\}}-x_{\{0,1\}}^2,\quad
x_{\{0,0\}}x_{\{1,1\}}-x_{\{1,1\}}x_{\{0,1\}}.
\]
The ideal~$I$ generated by this polynomial is actually radical.
The primary decomposition of $I$ is given by
\begin{gather*}
I=\langle x_{\{0,0\}}-x_{\{0,1\}}\rangle\cap
\langle x_{\{1,1\}},x_{\{1,0\}}\rangle.
\end{gather*}
The variety $X=V(I)$ is the union $V(x_{\{0,0\}}-x_{\{0,1\}})\cup V(x_{\{1,1\}},x_{\{1,0\}})$
of the plane $x_{\{0,0\}}-x_{\{0,1\}}=0$ and the line $x_{\{1,1\}}=x_{\{1,0\}}=0$.
The supports of $\mathbb Z_2$-graded contractions are the sets
\begin{gather*}
S_1:=\varnothing,\quad
S_2:=\{x_{\{0,0\}}\},\quad
S_3:=\{x_{\{1,1\}}\},
\\
S_4:=\{x_{\{0,0\}},x_{\{0,1\}}\},\quad
S_5:=\{x_{\{0,0\}},x_{\{1,1\}},x_{\{0,1\}}\}.
\end{gather*}
The cellular decomposition of $X$ is given by
\begin{gather*}
X\cap{\rm T}_{S_1}=(0,0,0),\quad
X\cap{\rm T}_{S_2}=\big(V(x_{\{0,0\}}-x_{\{0,1\}})\cup V(x_{\{1,1\}},x_{\{1,0\}})\big)\cap{\rm T}_{S_2}
\\
X\cap{\rm T}_{S_i}=V(x_{\{0,0\}}-x_{\{0,1\}})\cap{\rm T}_{S_i},\quad
i=3,4,5.
\end{gather*}
The ${\rm T}_{\mathbb Z_2}$-orbit of the point $(1,1,1)$, which corresponds to the identity function $\bar1$,
is the cell $X\cap{\rm T}_{S_5}$ of $X$.
Its closure is the plane $V(x_{\{0,0\}}-x_{\{0,1\}})$.
\end{example}

\begin{example}\label{ex:Z3}
The computations for the grading group $G=\mathbb Z_3$ are more complicated than those
presented in Example~\ref{ex:Z2}.
The coordinate algebra of $\mathbb A_{\mathbb Z_3}$ is the polynomial algebra in six variables,
$\mathbb C[x_{\{0,0\}},x_{\{1,1\}},x_{\{2,2\}},x_{\{0,1\}},x_{\{0,2\}},x_{\{1,2\}}]$.
The variety $X$ of contractions is the zero locus of the polynomials
\begin{gather*}
x_{\{0,0\}}x_{\{0,1\}}-x_{\{0,1\}}^2,\quad
x_{\{0,0\}}x_{\{0,2\}}-x_{\{0,2\}}^2,\quad
x_{\{0,0\}}x_{\{1,2\}}-x_{\{0,1\}}x_{\{1,2\}},\\
x_{\{0,0\}}x_{\{1,2\}}-x_{\{0,2\}}x_{\{1,2\}},\quad
x_{\{1,1\}}x_{\{0,1\}}-x_{\{1,1\}}x_{\{0,2\}},\quad
x_{\{2,2\}}x_{\{0,1\}}-x_{\{2,2\}}x_{\{0,2\}},\\
x_{\{0,1\}}x_{\{1,2\}}-x_{\{0,2\}}x_{\{1,2\}},\quad
x_{\{1,1\}}x_{\{2,2\}}-x_{\{0,1\}}x_{\{1,2\}},\quad
x_{\{1,1\}}x_{\{2,2\}}-x_{\{0,2\}}x_{\{1,2\}}.
\end{gather*}
The primary decomposition of $I$ is given by
\begin{gather*}
I=\langle x_{\{0,1\}}-x_{\{0,2\}},\,x_{\{1,1\}}x_{\{2,2\}}-x_{\{0,2\}}x_{\{1,2\}},\,x_{\{0,0\}}-x_{\{0,2\}}\rangle
\\ \qquad
\cap\langle x_{\{1,2\}},x_{\{0,2\}},x_{\{0,1\}},x_{\{2,2\}}\rangle
\cap\langle x_{\{1,2\}},x_{\{0,2\}},x_{\{0,1\}},x_{\{1,1\}}\rangle
\\ \qquad
\cap\langle x_{\{1,2\}},x_{\{0,1\}},x_{\{2,2\}},x_{\{1,1\}},x_{\{0,0\}}-x_{\{0,2\}}\rangle
\\ \qquad
\cap\langle x_{\{1,2\}},x_{\{0,2\}},x_{\{2,2\}},x_{\{1,1\}},x_{\{0,0\}}-x_{\{0,1\}}\rangle.
\end{gather*}
The irreducible components of the variety $X$ are
\begin{itemize}
\item two lines:
$V(x_{\{1,2\}},\,x_{\{0,2\}},\,x_{\{2,2\}},\,x_{\{1,1\}},\,x_{\{0,0\}}-x_{\{0,1\}})$ and
$V(x_{\{1,2\}},\,x_{\{0,1\}},\,x_{\{2,2\}},\,x_{\{1,1\}},\newline  x_{\{0,0\}}-x_{\{0,2\}})$,
\item two planes:
$V(x_{\{1,2\}},x_{\{0,2\}},x_{\{0,1\}},x_{\{1,1\}})$ and
$V(x_{\{1,2\}},x_{\{0,2\}},x_{\{0,1\}},x_{\{2,2\}})$,
\item dimension-three variety:
$V(x_{\{0,1\}}-x_{\{0,2\}},\,x_{\{1,1\}}x_{\{2,2\}}-x_{\{0,2\}}x_{\{1,2\}},\,x_{\{0,0\}}-x_{\{0,2\}})$.
\end{itemize}
\end{example}

\medskip\par\noindent
{\it Finding supports.}
We now describe an algorithm to find the elements of $\mathcal S(G)$ and count their number.
Let $J$ be the ideal in the polynomial ring $\mathbb C[x_{\{g,h\}}\mid g,h\in G]=\mathbb C[\mathbb A_G]$
generated by the binomials
\[
x_{\{g,h\}}x_{\{gh,k\}}-x_{\{h,k\}}x_{\{g,hk\}},\quad
x_{\{g,h\}}(x_{\{g,h\}}-1)\quad
\mbox{for all}\quad
g,h,k\in G.
\]
The affine variety~$V(J)$ is a finite set of points in the affine space~$\mathbb A_G$.
These points satisfy the determining equations for $G$-graded contractions and their coordinates are zero or one,
so they correspond to the elements of $\mathcal S(G)$.

Now consider the algebra $\mathbb C[\mathbb A_G]/\sqrt J$ of regular functions of $V(J)$.
Since~$V(J)$ is merely a finite set of points, the algebra $\mathbb C[\mathbb A_G]/\sqrt J$ is finite-dimensional
and its dimension is precisely the number of points in~$V(J)$.
\noprint{Furthermore, the image of the set of monomials $\{\prod_{\{g,h\}\in S} x_{\{g,h\}}\mid S\subset\mathcal P_G\}$
under the projection $\mathbb C[\mathbb A_G]\twoheadrightarrow\mathbb C[\mathbb A_G]/\sqrt J$
is the canonical basis of the algebra $\mathbb C[\mathbb A_G]/\sqrt J$.
The elements of this basis correspond precisely to the indicator functions of the supports.}

All the above computations can be efficiently carried out in {\sc Singular}.
Even for the order-8 grading groups, i.e., the number of variables of $\mathbb C[\mathbb A_G]$ is $36$,
these computations are performed within seconds.
In Table~\ref{tab:Supports} we present the numerical results for the number of supports depending on the grading group $G$.
We also observe an interesting phenomenon that only a small part of subsets of $\mathcal P_G$ are actually supports.
\begin{table}[!ht]\footnotesize
\caption{Number of supports}
\label{tab:Supports}
\renewcommand{\arraystretch}{1.4}
\begin{center}
\begin{tabular}{|c|c|c|c|c|c|c|c|c|c|c|}
\hline
$G$                          & $\mathbb Z_2$  & $\mathbb Z_3$ & $\mathbb Z_2^2$ & $\mathbb Z_4$ & $\mathbb Z_5$
&$\mathbb Z_6$ &$\mathbb Z_7$ & $\mathbb Z_2^3$  & $\mathbb Z_2\times\mathbb Z_4$ & $\mathbb Z_8$\\
\hline
$|\mathcal S(G)|$                     & 5              & 15            & $41$                           & $47$          & $185$ 
& $652$ & $2997$ &  $16147$            & $14179$          & $13883$ \\
$2^{|\mathcal P(G)|}$        & 8              & $2^6$         & $2^{10}$                       & $2^{10}$      & $2^{15}$
& $2^{21}$ & $2^{28}$ &  $2^{36}$           & $2^{36}$         & $2^{36}$\\
$\frac{|\mathcal S(G)|}{2^{|\mathcal P(G)|}}$ & 0.625          & $0.2344$      & $0.040$                         & $0.046$       & $1.43{\rm E}\mbox{-3}$
& $3.11{\rm E}\mbox{-4}$ & $1.12{\rm E}\mbox{-5}$ & $2.35{\rm E}\mbox{-7}$  & $2.06{\rm E}\mbox{-7}$  &  $2.02{\rm E}\mbox{-7}$   \\ 
\hline
\end{tabular}
\end{center}
%\vspace*{-2ex}
\end{table}

\medskip\par\noindent
{\it Orbit closures and higher-order identities}.
The elements of $X(\bar 1)$ are precisely the {\it continuous contractions} as defined in~\cite[Section~4]{mood1991a}.
In the case $\mathbb F=\mathbb C$,
$\mathfrak L^\varepsilon$ is a graded degeneration of a Lie algebra $\mathfrak L$ for any $\varepsilon\in X(\bar 1)$,
so, in particular, it is a contraction of $\mathfrak L$ in the sense of Definition~\ref{def:ContinuousContraction}.
Recall from Section~\ref{sec:GroupCohomology}
that every relation $r=1$ for $r\in\langle\mathcal R_G\rangle$
can be uniquely rewritten in the form $r_1=r_2$,
where $r_1$ and $r_2$ are monomials of the same degree in two disjoint sets of variables.
Following~\cite{weim1995a,weim2006a}, we call these relations {\it higher-order identities};
they include the defining relations~\eqref{eq:Relations}.
Since the elements of ${\rm Z}^2_{\rm sym}(G,\mathbb F^\times)$ satisfy all higher-order identities,
so do the elements of the closure, $X(\bar 1)$.
It follows from Theorem~\ref{thm:ClassificationOfContractions} and~\cite[Theorem~VI.2]{weim1995a} that any element of $X(\bar 1)$
is equivalent via normalization to a generalized In\"on\"u--Wigner contraction.

Now let $S$ be a subset of $\mathcal P_G$ and $\bar 1_S$ be its indicator function
considered as the $|\mathcal P_G|$-tuple:
\begin{equation*}
(\bar 1_S)_i:=
\begin{cases}
1 &\mbox{if }i\in S,\\
0 &\mbox{otherwise}.
\end{cases}
\end{equation*}
Note that $S\in\mathcal S(G)$ if and only if $\bar1_S$ belongs to~$X$.
In this case denote by $X(\bar1_S)\subset X$ the closure of the ${\rm T}_G$-orbit of $\bar1_S$.
In particular $S=\mathcal P_G$ gives~$X(\bar1)$.
More generally, for any $\varepsilon\in X_S$, denote by $X(\varepsilon)$ the closure of ${\rm T}_G$-orbit of $\varepsilon$.
Note that $X(\varepsilon)\subset \bigcup_{S'\subset S}X_{S'}$.
The following result describes the vanishing ideal of $X(\varepsilon)$
in terms of the {\it surviving higher-order identities},
i.e., those of the form $r_1=r_2$ where both sides involve variables indexed only by $i\in S$.

\begin{theorem}\label{thm:VanishingIdeal}
Let $G$ be an abelian group and $S$ be a finite subset of $\mathcal P_G$.
Let $\mathbb F$ be an algebraically closed field and consider the action of
${\rm Fun}(G,\mathbb F^\times)$ on $\mathbb A_S$ given by
\[
\alpha\cdot(x_{\{g,h\}})_{\{g,h\}\in S}=\big(d\alpha(g,h)\,x_{\{g,h\}}\big)_{\{g,h\}\in S}.
\]
Then the vanishing ideal of the closure of the orbit of $\bar 1_S$ in $\mathbb A_S$
is spanned by the binomials $r_1-r_2$ arising from all surviving higher-order identities,
i.e., satisfying $r_1r_2^{-1}\in\langle\mathcal R_G\rangle\cap\mathcal F_S$.
More generally, the vanishing ideal of the closure of the orbit of any $\varepsilon\in{\rm T}_S={\rm Fun}(S,\mathbb F^\times)$
is spanned by the binomials $\varepsilon(r_2)r_1-\varepsilon(r_1)r_2$,
where we make the identification ${\rm Fun}(S,\mathbb F^\times)={\rm Hom}(\mathcal F_S,\mathbb F^\times)$.
\end{theorem}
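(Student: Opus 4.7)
The plan is to reduce first to the case $\varepsilon=\bar 1_S$ and then to exploit the $\mathbb ZG$-grading on $\mathbb F[\mathbb A_S]$ induced by the $T_G$-action, where I write $T_G:={\rm Fun}(G,\mathbb F^\times)$.

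For the reduction, observe that for any $\varepsilon\in{\rm T}_S$ the multiplication map $m_\varepsilon\colon\mathbb A_S\to\mathbb A_S$, $x\mapsto\varepsilon\cdot x$, is a $T_G$-equivariant automorphism sending $\bar 1_S$ to~$\varepsilon$, and hence identifies $\overline{T_G\cdot\varepsilon}$ with $\overline{T_G\cdot\bar 1_S}$. Pulling back a binomial $r_1-r_2$ via $m_{\varepsilon^{-1}}$, one obtains $\varepsilon(r_1)^{-1}r_1-\varepsilon(r_2)^{-1}r_2$, which is a nonzero scalar multiple of $\varepsilon(r_2)r_1-\varepsilon(r_1)r_2$. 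So it suffices to treat the case $\varepsilon=\bar 1_S$.

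For the main computation, note that each coordinate $x_{\{g,h\}}$ is a $T_G$-eigenfunction of weight $\partial_2(\{g,h\})=g+h-gh\in\mathbb ZG$, so $\mathbb F[\mathbb A_S]$ carries a $\mathbb ZG$-grading in which a monomial $r=\prod_{i\in S}x_i^{a_i}$ has weight $\partial_2(r)=\sum_ia_i\partial_2(i)$. Since the orbit $T_G\cdot\bar 1_S$ and its closure are $T_G$-stable, the vanishing ideal $I$ inherits the grading and decomposes as the direct sum of its weight components. Given a weight-$w$ element $f=\sum_ic_ir_i\in\mathbb F[\mathbb A_S]$ with each $\partial_2(r_i)=w$, the evaluation at an orbit point $d\alpha|_S$ reads
\[
f(d\alpha|_S)=\sum_ic_i\chi^{\partial_2(r_i)}(\alpha)=\chi^w(\alpha)\sum_ic_i,
\]
where $\chi^w$ is the character of $T_G$ attached to $w\in\mathbb ZG$. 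Since $\chi^w\ne 0$, this vanishes identically in $\alpha$ iff $\sum_ic_i=0$. After fixing a reference monomial $r_1$, the weight-$w$ component of $I$ is therefore $\mathbb F$-spanned by the differences $r_i-r_1$ with $\partial_2(r_i)=\partial_2(r_1)=w$. By Lemma~\ref{lem:CisExact}, $\ker\partial_2=\mathop{\rm im}\partial_3=\langle\mathcal R_G\rangle$, so the condition $\partial_2(r_i)=\partial_2(r_j)$ is equivalent to $r_ir_j^{-1}\in\langle\mathcal R_G\rangle\cap\mathcal F_S$. Summing over all weights~$w$ produces the claimed spanning set of binomials.

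The argument is largely formal once the correct grading is in place; the only substantive input beyond Lemma~\ref{lem:CisExact} is the linear independence of characters of the torus~$T_G$, which is what forces the vanishing condition in each fixed weight to collapse to the single linear equation $\sum_ic_i=0$. The reduction from arbitrary $\varepsilon$ to $\bar 1_S$ via $m_\varepsilon$ is a routine check of $T_G$-equivariance.
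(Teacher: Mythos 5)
Your proof is correct, and it takes a cleaner route than the paper's at the key step. The paper also reduces to describing the functions vanishing on the dense orbit $B^2_S(\mathbb F^\times)\cdot\varepsilon$ and also finishes with linear independence of characters, but it establishes the crucial identification --- two monomials $r_1,r_2$ agree on the orbit iff $r_1r_2^{-1}\in\langle\mathcal R_G\rangle\cap\mathcal F_S$ --- by a double-perp computation: it invokes Theorem~\ref{thm:ClassificationOfContractions} and Corollary~\ref{cor:AlgClosedH^2_S} (hence divisibility of $\mathbb F^\times$) to get $B^2_S(\mathbb F^\times)=(\langle\mathcal R_G\rangle\cap\mathcal F_S)^\perp$, and then uses freeness of $\mathcal I_S$ to recover $\langle\mathcal R_G\rangle\cap\mathcal F_S$ as the double perp. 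You bypass all of that by reading the $T_G$-action as a $\mathbb ZG$-grading with weight map $\partial_2$ and quoting $\ker\partial_2=\langle\mathcal R_G\rangle$ from Lemma~\ref{lem:CisExact} directly; the injection $\mathcal F_S/(\langle\mathcal R_G\rangle\cap\mathcal F_S)\hookrightarrow\mathbb ZG$ then separates the monomial classes for free, and within a fixed weight the vanishing condition collapses to the single equation $\sum_i c_i=0$. Your handling of general $\varepsilon$ via the equivariant automorphism $m_\varepsilon$ also matches the paper's remark that the orbit of $\varepsilon$ is $B^2_S(\mathbb F^\times)\varepsilon$, just made explicit at the level of ideals. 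Two small points worth spelling out if you write this up: (i) the claim that the $T_G$-stable ideal $I$ splits into weight components uses that distinct elements of $\mathbb ZG$ restrict to distinct characters of ${\rm Fun}(G,\mathbb F^\times)$, which holds because $\mathbb F^\times$ (for $\mathbb F$ algebraically closed) contains elements of arbitrarily large finite order even in positive characteristic; and (ii) when $G$ is infinite, $T_G$ is not an algebraic torus, but only the finitely many weights $\partial_2(i)$, $i\in S$, intervene, so the Dedekind independence argument still applies verbatim.
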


\begin{proof}
The orbit of $\bar 1_S$ is the range of the function $f_S\colon{\rm Fun}(G,\mathbb F^\times)\to\mathbb A_S$, $\alpha\mapsto \alpha\cdot\bar 1_S$.
Since $\alpha\cdot\bar 1_S=(d\alpha)\bar 1_S$, 
the range of $f_S$ is precisely ${\rm B}^2_S(\mathbb F^\times)$ defined in Section~\ref{sec:GroupCohomology}.
%if we make the identification ${\rm Fun}(S,\mathbb F^\times)={\rm Hom}(\mathcal F_S,\mathbb F^\times)$. 
For any monomial~$r$ in variables indexed by $i\in S$,
denote by~$\lambda_r$ the restriction of the corresponding polynomial function $\mathbb A_S\to\mathbb F$
to the torus ${\rm Fun}(S,\mathbb F^\times)$.
First we claim that $\lambda_{r_1}|_{{\rm B}^2_S(\mathbb F^\times)}=\lambda_{r_2}|_{{\rm B}^2_S(\mathbb F^\times)}$ if and only if $r_1r_2^{-1}\in\mathcal\langle\mathcal R_G\rangle\cap\mathcal F_S$.

In the course of the proof we use the following notation:
if $B$ is a subgroup of an abelian group $A$ (written multiplicatively),
then
\[
B^\perp:=\{f\in{\rm Hom}(A,\mathbb F^\times)\mid f(B)=1\}
\subset{\rm Hom}(A,\mathbb F^\times).
\]
Similarly, if $H$ is a subgroup of ${\rm Hom}(A,\mathbb F^\times)$,
then
\[
H^\perp:=\{a\in A\mid \chi(a)=1\mbox{ for all }\chi\in H\}\subset A.
\]

It is clear that $\lambda_{r_1}|_{{\rm B}^2_S(\mathbb F^\times)}=\lambda_{r_2}|_{{\rm B}^2_S(\mathbb F^\times)}$ if and only if the function $\lambda_{r_1}/\lambda_{r_2}$
maps ${\rm B}^2_S(\mathbb F^\times)$ to~$1$,
and the latter condition is equivalent to $r_1r_2^{-1}\in {\rm B}^2_S(\mathbb F^\times)^\perp\subset\mathcal F_S$.

Theorem~\ref{thm:ClassificationOfContractions} and Corollary~\ref{cor:AlgClosedH^2_S}
give us that the kernel of the restriction map
${\rm Hom}(\mathcal Z_S,\mathbb F^\times)\to{\rm Hom}(\mathcal K_S,\mathbb F^\times)$
is ${\rm B}^2_S(\mathbb F^\times)$.
Since $\mathcal Z_S=\mathcal F_S/\langle\mathcal R_S\rangle$
and $\mathcal K_S=(\langle\mathcal R_G\rangle\cap\mathcal F_S)/\langle\mathcal R_S\rangle$,
the kernel of the restriction map
${\rm Hom}(\mathcal F_S,\mathbb F^\times)\to{\rm Hom}(\langle\mathcal R_G\rangle\cap\mathcal F_S,\mathbb F^\times)$
is also ${\rm B}^2_S(\mathbb F^\times)$.
By definition, this kernel is $(\langle\mathcal R_G\rangle\cap\mathcal F_S)^\perp$,
so we get ${\rm B}^2_S(\mathbb F^\times)=(\langle\mathcal R_G\rangle\cap\mathcal F_S)^\perp$ and hence ${\rm B}^2_S(\mathbb F^\times)^\perp=((\langle\mathcal R_G\rangle\cap\mathcal F_S)^\perp)^\perp$.
Finally, $\mathcal F_S/(\langle\mathcal R_G\rangle\cap\mathcal F_S)\simeq\mathcal I_S$ is a free abelian group,
so its points are separated by homomorphisms to~$\mathbb F^\times$.
Therefore, $((\langle\mathcal R_G\rangle\cap\mathcal F_S)^\perp)^\perp=\langle\mathcal R_G\rangle\cap\mathcal F_S$,
which completes the proof of the claim.

Let $I$ be the vanishing ideal of the closure of ${\rm B}^2_S(\mathbb F^\times)$ in $\mathbb A_S$.
Since ${\rm B}^2_S(\mathbb F^\times)$ is dense in its closure,
we have
\[
I=\{\varphi\in\mathbb F[\mathbb A_S]\mid \varphi|_{{\rm B}^2_S(\mathbb F^\times)}=0\}.
\]
A binomial $r_1-r_2$ arises from a surviving higher-order identity
if and only if $r_1r_2^{-1}\in\langle\mathcal R_G\rangle\cap\mathcal F_S$.
It follows from the above claim that
all these binomials $r_1-r_2$ belong to $I$.
Modulo a linear combination of such binomials,
any $\varphi\in\mathbb F[\mathbb A_S]$ 
is congruent to a linear combination of monomials $r_j$ such that 
$r_ir_j^{-1}\notin\langle\mathcal R_G\rangle\cap\mathcal F_S$ for $i\ne j$.
But the restrictions $\lambda_{r_j}|_{{\rm B}^2_S(\mathbb F^\times)}$
are pairwise distinct multiplicative characters of ${\rm B}^2_S(\mathbb F^\times)$,
hence they are linearly independent over $\mathbb F$.
Therefore the ideal $I$ is spanned by the binomials $r_1-r_2$
arising from surviving higher-order identities.
This finishes the proof of the first claim of the theorem

The second part of theorem's statement follows, since the ${\rm T}_S$-orbit of $\varepsilon$
is ${\rm B}^2_S(\mathbb F^\times)\varepsilon$.
\end{proof}

\begin{remark}
When $G$ is a finite group, the closure of the orbit of $\bar1_S$ is, by definition, the toric variety
arising from the integral $|G|\times|S|$ matrix $A$,
whose columns are indexed by $\{g,h\}\in S$ and are given by components of the vector $g+h-gh$
in the standard basis of the free $\mathbb Z$-module $\mathbb ZG$,
see~\cite[\S1.1]{cox2011A} for details about toric varieties.
The nullspace $\{x\in\mathbb Z^{|S|}\mid Ax=0\}$ of the matrix $A$ can be identified with the abelian group $\mathcal F_S\cap\ker\p_2$,
where $\p_\bullet$ is the differential of the complex $C_\bullet$ defined in Section~\ref{sec:GroupCohomology}.
Since $\mathop{\rm im}\p_3=\langle\mathcal R_G\rangle$ and $\mathcal F_S\cap\ker\p_2=\mathcal F_S\cap\mathop{\rm im}\p_3$
by exactness of the complex~$C_\bullet$,
Theorem~\ref{thm:VanishingIdeal} follows from~\cite[Proposition~1.1.9]{cox2011A}.
\end{remark}

\begin{corollary}\label{cor:VanishingIdeals}
Let $G$ be a finite abelian group.

(1) The vanishing ideal of $X(\bar 1)$ is spanned by the binomials $r_1-r_2$ arising from all higher-order identities.

(2) For $S\in\mathcal S(G)$, we have $X(\bar1_S)\subset X(\bar 1)$ if and only if, for any higher-order identity $r_1=r_2$,
if $r_1$ involves only variables in $S$ then so does $r_2$
(no ``weak violations'' in the terminology of~\cite{weim2006a}).

(3) For any $\varepsilon\in X_S$ and $\varepsilon'\in X$,
we have $X(\varepsilon')\subset X(\varepsilon)$ if and only if
$S(\varepsilon')\subset S$ and $\varepsilon'\varepsilon^{-1}$
satisfies all surviving higher-order identities $r_1=r_2$ for $S$,
i.e., arising from $r_1r_2^{-1}\in\langle R_G\rangle\cap\mathcal F_S$.
\end{corollary}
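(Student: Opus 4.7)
The plan is to reduce all three parts to Theorem~\ref{thm:VanishingIdeal} via the elementary observation that for a Zariski-closed, ${\rm T}_G$-invariant subvariety $Y\subset\mathbb A_G$ and any point $p\in\mathbb A_G$, one has $X(p)\subset Y$ if and only if $p\in Y$; indeed, $Y$ then automatically contains the closure of ${\rm T}_G\cdot p$. Part (1) is then immediate: specialize the theorem to $S=\mathcal P_G$, so that $\bar 1_S=\bar 1$, $\mathbb A_S=\mathbb A_G$, and every higher-order identity is ``surviving'' because $\langle\mathcal R_G\rangle\cap\mathcal F_{\mathcal P_G}=\langle\mathcal R_G\rangle$.

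For (2), the observation gives $X(\bar 1_S)\subset X(\bar 1)$ iff $\bar 1_S\in X(\bar 1)$, which by (1) is equivalent to $r_1(\bar 1_S)=r_2(\bar 1_S)$ for every binomial $r_1-r_2$ arising from a higher-order identity. A direct computation shows $r(\bar 1_S)=1$ when every variable of $r$ is indexed by an element of $S$, and $r(\bar 1_S)=0$ otherwise; hence the vanishing of $r_1-r_2$ at $\bar 1_S$ translates exactly into the ``no weak violations'' condition: $r_1$ involves only variables in $S$ iff $r_2$ does.

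For (3), since ${\rm T}_G$ preserves supports and $\mathbb A_S$ is closed (being a coordinate subspace, modulo the evident fix of the typo in its definition), $X(\varepsilon)\subset\mathbb A_S$; consequently the inclusion $X(\varepsilon')\subset X(\varepsilon)$ forces $\varepsilon'\in\mathbb A_S$, i.e., $S(\varepsilon')\subset S$. Assuming this, the observation reduces the inclusion to $\varepsilon'\in X(\varepsilon)$, and the second assertion of Theorem~\ref{thm:VanishingIdeal} characterizes it as the simultaneous vanishing of the binomials $\varepsilon(r_2)r_1-\varepsilon(r_1)r_2$ at $\varepsilon'$ for all $r_1r_2^{-1}\in\langle\mathcal R_G\rangle\cap\mathcal F_S$, i.e., $\varepsilon(r_2)\varepsilon'(r_1)=\varepsilon(r_1)\varepsilon'(r_2)$. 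Since $\varepsilon$ is nonvanishing on $S$, this is precisely the condition that $\varepsilon'\varepsilon^{-1}$ satisfies the surviving higher-order identity $r_1=r_2$.

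I do not anticipate a serious obstacle, as the statements are essentially bookkeeping consequences of Theorem~\ref{thm:VanishingIdeal}. The only mild subtlety, most visible in (3), is correctly interpreting ``$\varepsilon'\varepsilon^{-1}$ satisfies an identity'' when $\varepsilon'$ has zeros on $S$: the identity $\varepsilon(r_2)\varepsilon'(r_1)=\varepsilon(r_1)\varepsilon'(r_2)$ then forces $\varepsilon'(r_1)$ and $\varepsilon'(r_2)$ to vanish simultaneously, which is the natural reading of the statement, and no extra argument is needed.
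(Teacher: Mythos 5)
Your proposal is correct and is exactly the intended derivation: the paper states this corollary without proof as an immediate consequence of Theorem~\ref{thm:VanishingIdeal}, and your reduction — orbit-closure containment in a closed ${\rm T}_G$-invariant set is equivalent to membership of the base point, followed by evaluating the binomials at $\bar 1_S$ (for (2)) or at $\varepsilon'$ (for (3), after noting $X(\varepsilon)\subset\mathbb A_S$ forces $S(\varepsilon')\subset S$) — is the natural way to fill it in. Your handling of the typo in the definition of $\mathbb A_S$ and of the case where $\varepsilon'$ vanishes on part of $S$ is also right.
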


Theorem~\ref{thm:VanishingIdeal} is relevant when applying a $G$-graded contraction $\varepsilon$
to a finite-dimensional Lie algebra $\mathfrak L$
with a $G$-grading $\Gamma\colon\mathfrak L=\bigoplus_{g\in G}\mathfrak L_g$.
Let $S:={\rm Supp}_2(\Gamma)$,
i.e., the set 
\[
\{\{g,h\}\in\mathcal P_G\mid[\mathfrak L_g,\mathfrak L_h]\ne0\}\subset\mathcal P_{{\rm Supp}(\Gamma)}.
\]
Then the Lie bracket $\mu$ of $\mathfrak L$ is the direct sum of
$\mu_{g,h}\colon\mathfrak L_g\times\mathfrak L_h\to\mathfrak L_{gh}$,
and the bracket of the contracted algebra~$\mathfrak L^\varepsilon$ is obtained by multiplying
each $\mu_{g,h}$ by $\varepsilon(g,h)$.
Moreover, the action of $D(\Gamma)$ on $\mu$ from Section~\ref{sec:GradingsAndContractions}
corresponds to the action of ${\rm Fun}(G,\mathbb F^\times)$ on $\bar 1_S$.
Thus the bracket of $\mathcal L^\varepsilon$ is in the closure of the $D(\Gamma)$-orbit of $\mu$
if and only if $\varepsilon$ is in the closure of the ${\rm Fun}(G,\mathbb F^\times)$-orbit of $\bar 1_S$.
Theorem~\ref{thm:VanishingIdeal} implies the following:

\begin{corollary}\label{cor:GradedDegenerationsAndContractions}
The contracted algebra $\mathcal L^\varepsilon$ is a graded degeneration of $\mathcal L$
if and only if $\varepsilon$ satisfies all surviving higher-order identities for $S={\rm Supp}_2(\Gamma)$.
In particular, if ${\rm Supp}_2(\Gamma)=\mathcal P_G$, then the graded degenerations
are precisely the graded contractions in $X(\bar1)$,
i.e., they satisfy all higher-order identities.
\end{corollary}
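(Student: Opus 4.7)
The plan is to deduce Corollary~\ref{cor:GradedDegenerationsAndContractions} from Theorem~\ref{thm:VanishingIdeal} by an explicit linear embedding that transports the $D(\Gamma)$-action on brackets into the ${\rm Fun}(G,\mathbb F^\times)$-action on $\mathbb A_S$. Decompose the Lie bracket of $\mathfrak L$ as $\mu=\sum_{\{g,h\}\in S}\mu_{g,h}$ with $\mu_{g,h}\colon\mathfrak L_g\times\mathfrak L_h\to\mathfrak L_{gh}$ nonzero by the definition of $S={\rm Supp}_2(\Gamma)$, and introduce
\[
\Phi\colon\mathbb A_S\to\wedge^2V^*\otimes V,\qquad
\Phi(x):=\sum_{\{g,h\}\in S}x_{\{g,h\}}\mu_{g,h}.
\]
Because the components $\mu_{g,h}$ are nonzero and live in pairwise distinct bigraded subspaces of $\wedge^2V^*\otimes V$, the map $\Phi$ is linear and injective, hence a closed immersion. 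Clearly $\Phi(\bar 1_S)=\mu$; moreover, viewing the restriction $\varepsilon|_S$ as a point of $\mathbb A_S$ (extending by zero outside $S$, which loses no information because $\mu_{g,h}=0$ for $\{g,h\}\notin S$ and so the values of $\varepsilon$ outside $S$ do not enter into $\mu^\varepsilon$), we have $\Phi(\varepsilon|_S)=\mu^\varepsilon$.

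Next I would check that $\Phi$ is equivariant for the two torus actions in play: the ${\rm Fun}(G,\mathbb F^\times)$-action on $\mathbb A_S$ used in Theorem~\ref{thm:VanishingIdeal} matches, under $\Phi$, the $D(\Gamma)$-action from Section~\ref{sec:GradingsAndContractions} restricted to $\Phi(\mathbb A_S)$. This is a direct computation with a diagonal element $g\in D(\Gamma)$ determined by $\alpha\in{\rm Fun}(G,\mathbb F^\times)$, and is essentially the translation already spelled out in the paragraph preceding the corollary. Since $\Phi$ is a closed embedding, it maps the closure in $\mathbb A_S$ of the ${\rm Fun}(G,\mathbb F^\times)$-orbit of $\bar 1_S$ bijectively onto the closure of the $D(\Gamma)$-orbit of $\mu$ in $\wedge^2V^*\otimes V$ (equivalently, in $\mathcal L_{\underline n}(\mathbb F)$, since the Lie algebra conditions are closed).

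Combining these observations, $\mathfrak L^\varepsilon$ is a graded degeneration of $\mathfrak L$ precisely when $\Phi(\varepsilon|_S)\in\overline{D(\Gamma)\cdot\mu}$, which by equivariance happens if and only if $\varepsilon|_S$ lies in the closure of the ${\rm Fun}(G,\mathbb F^\times)$-orbit of $\bar 1_S$ in $\mathbb A_S$. By Theorem~\ref{thm:VanishingIdeal} the latter is equivalent to $\varepsilon$ satisfying every binomial equation $r_1=r_2$ with $r_1r_2^{-1}\in\langle\mathcal R_G\rangle\cap\mathcal F_S$, i.e., every surviving higher-order identity for $S$. The second claim is then an immediate specialization: when $S=\mathcal P_G$ we have $\mathbb A_S=\mathbb A_G$ and $\bar 1_S=\bar 1$, all higher-order identities are surviving, and the zero locus of the corresponding binomials coincides with $X(\bar 1)$. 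The main obstacle here is really only bookkeeping---making the embedding $\Phi$ precise and confirming its equivariance---since the hard algebro-geometric work has already been done in Theorem~\ref{thm:VanishingIdeal}.
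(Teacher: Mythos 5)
Your proposal is correct and follows essentially the same route as the paper: the paper's own justification is the paragraph preceding the corollary, which identifies the $D(\Gamma)$-action on $\mu$ with the ${\rm Fun}(G,\mathbb F^\times)$-action on $\bar 1_S$ and then invokes Theorem~\ref{thm:VanishingIdeal}. Your explicit closed embedding $\Phi$ and the equivariance check simply make that identification precise.
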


\section{Graded contractions as lax monoidal functors}\label{sec:EquivOfContractions}

Now we will consider the way to formalize the ``generic'' point of view on contractions
using the functors introduced by Moody and Patera in \cite{mood1991a},
which we will define in a more general setting of lax monoidal functors
on the category ${\rm Vec}^G$ of $G$-graded vector spaces.
As noted in Section~\ref{sec:GradingsAndContractions},
there are different types of equivalences for graded contractions, two of which correspond to isomorphism in ${\rm Vec}^G$,
namely, isomorphism (Definition~\ref{def:StrongEquiv})
and equivalence via normalization (Definition~\ref{def:EquivViaNormalization}).
One of them implies the other, and Weimar--Woods conjectured that they, in fact, coincide:

\begin{conjecture}[\cite{weim2006a}]\label{conj:WeimWoods}
Two graded contractions $\varepsilon$ and $\varepsilon'$ are isomorphic, $\varepsilon\simeq\varepsilon'$,
if and only if they are equivalent via normalization, $\varepsilon\sim_{\rm n}\varepsilon'$.
\end{conjecture}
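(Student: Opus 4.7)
The plan is to approach Conjecture~\ref{conj:WeimWoods} by evaluating the strong equivalence on a universal example. The direction $\varepsilon \sim_n \varepsilon' \Rightarrow \varepsilon \approx \varepsilon'$ is already recorded in the paper, so the content lies in the reverse direction: assuming $\varepsilon \approx \varepsilon'$, construct a map $\alpha\colon G\to\mathbb F^\times$ witnessing $\varepsilon' = \varepsilon\, d\alpha$.

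First I would specialize the hypothesis to the free $G$-graded Lie algebra $\mathfrak F$ on generators $x_g$ of degree $g\in G$ (the same algebra used in the proof of Proposition~\ref{prop:GenericContraction}). This produces a graded Lie algebra isomorphism $\varphi\colon\mathfrak F^\varepsilon\to\mathfrak F^{\varepsilon'}$. The abelianization $\mathfrak F/[\mathfrak F,\mathfrak F]$ is one-dimensional in each degree (spanned by the class of $x_g$), so $\varphi$ descends to multiplication by a well-defined nonzero scalar $\alpha(g)\in\mathbb F^\times$ on each degree-$g$ component. This $\alpha$ is my candidate.

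To verify $\varepsilon' = \varepsilon\, d\alpha$, I would look at the induced map on $\mathfrak F^2/\mathfrak F^3$, where $\mathfrak F^n$ denotes the $n$-th term of the lower central series. Writing $\varphi(x_g) = \alpha(g)\,x_g + \beta_g$ with $\beta_g \in \mathfrak F^2$, the bracket identity
\[
\varepsilon(g,h)\,\varphi([x_g,x_h]) = \varepsilon'(g,h)\,[\varphi(x_g),\varphi(x_h)]
\]
reduces modulo $\mathfrak F^3$ to a relation among brackets $[x_{g_1},x_{g_2}]$ with $g_1 g_2 = gh$. Isolating the coefficient of $[x_g,x_h]$ on both sides should then give the required identity $\varepsilon'(g,h)\alpha(g)\alpha(h) = \varepsilon(g,h)\alpha(gh)$.

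The hard part is precisely this isolation step. The space $\mathfrak F^2/\mathfrak F^3$ in degree $gh$ is typically multi-dimensional, spanned by $[x_{g_1},x_{g_2}]$ for every factorization $g_1 g_2 = gh$, subject to Jacobi relations; in particular, $\varphi$ need not be ``diagonal'' in any obvious basis, and the correction terms $\beta_g\in\mathfrak F^2$ can contribute to $[\varphi(x_g),\varphi(x_h)]$ in ways that compete with the desired leading coefficient. If a basis-independent argument using Lie words cannot be made to work, a fallback strategy is to evaluate the hypothesis on carefully tailored graded Lie algebras (for instance, quotients of $\mathfrak F/\mathfrak F^3$ by ideals killing the unwanted summands) where the relevant component becomes one-dimensional. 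This is also the obstacle that motivates the paper's shift to the functorial setting: naturality of $\varphi$ with respect to the embeddings $\mathbb F_g\hookrightarrow\mathfrak L$ of one-dimensional graded pieces forces $\varphi_{\mathfrak L}$ to act by pure rescaling on each $\mathfrak L_g$, which is exactly what delivers Proposition~\ref{prop:LaxWeimWoodsConj} cleanly in the absence of a full proof of Conjecture~\ref{conj:WeimWoods}.
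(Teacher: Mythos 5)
First, a point of orientation: the statement you were asked about is recorded in the paper as a \emph{conjecture} (due to Weimar--Woods) and is not proved there. The paper establishes only the easy implication $\varepsilon\sim_{\rm n}\varepsilon'\Rightarrow\varepsilon\approx\varepsilon'$, a partial converse for $G=\mathbb Z$ restricted to pairs $m\ne n$ (Proposition~\ref{thm:WittAlgebra}, via the Witt algebra), and a functorial analogue (Proposition~\ref{prop:LaxWeimWoodsConj}). So your proposal cannot be checked against a proof in the paper; it has to stand on its own, and you are candid that it does not: the ``isolation step'' is exactly where it stops. Two concrete problems in the sketch are worth naming beyond that admission.

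First, $\alpha(g)$ is not well defined by your abelianization argument once $\varepsilon$ has zeros: $\varphi$ carries $[\mathfrak F^\varepsilon,\mathfrak F^\varepsilon]=\sum_{\{g,h\}\in S(\varepsilon)}[\mathfrak F_g,\mathfrak F_h]$ onto $[\mathfrak F^{\varepsilon'},\mathfrak F^{\varepsilon'}]$ (these coincide as subspaces because $S(\varepsilon)=S(\varepsilon')$), but this is strictly smaller than $\mathfrak F^2=[\mathfrak F,\mathfrak F]$ unless $S(\varepsilon)=\mathcal P_G$, so $\varphi$ need not preserve $\mathfrak F^2$ and the quotient it does respect is no longer one-dimensional in each degree. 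Second, even granting $\alpha$, the computation modulo $\mathfrak F^3$ gives $\varepsilon(g,h)\,\varphi([x_g,x_h])\equiv\varepsilon'(g,h)\alpha(g)\alpha(h)[x_g,x_h]$; the correction terms $\beta_g$ are not actually the obstacle, since $[x_g,\beta_h]$ and $[\beta_g,\beta_h]$ lie in $\mathfrak F^3$. What is missing is any reason for $\varphi([x_g,x_h])$ to be congruent to $\alpha(gh)[x_g,x_h]$: on the degree-$gh$ slice of $\mathfrak F^2/\mathfrak F^3$ the map $\varphi$ can mix the brackets $[x_{g_1},x_{g_2}]$ with $g_1g_2=gh$ arbitrarily, and nothing in the hypothesis pins down its action on the particular vector $[x_g,x_h]$. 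Your fallback of passing to quotients with one-dimensional homogeneous components (which is precisely the paper's Witt-algebra strategy for $G=\mathbb Z$) hits a structural wall at the diagonal pairs: if $\dim\mathfrak L_g=1$ then $[\mathfrak L_g,\mathfrak L_g]=0$ because the bracket is alternating, so such test algebras can never detect $\varepsilon(g,g)$ --- which is exactly why Proposition~\ref{thm:WittAlgebra} only recovers the relation for $m\ne n$. Your closing observation, that naturality of $\varphi$ is what forces it to act by pure rescaling and thereby yields Proposition~\ref{prop:LaxWeimWoodsConj}, is the correct diagnosis and matches the paper's actual resolution.
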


Draper et al. recently showed that this conjecture holds for the grading group $\mathbb Z_2^3$, by considering a special $\mathbb Z_2^3$-grading on the simple Lie algebra of type $G_2$, see discussion in~\cite[Section~1]{drap2024a}.
%We will now show that it holds for $\mathbb Z$, by considering the Witt algebra.
Note that, if we assume that the isomorphism in Definition~\ref{def:StrongEquiv} holds for a specific $G$-graded Lie algebra $\mathfrak L$, rather than {\it for all} Lie algebras,
then this statement is false, even if $\mathfrak L$ is semisimple and the support of the grading generates $G$, as discussed in~\cite[Example~2.12]{drap2024a}.
Since there exists a $G$-graded Lie algebra with the second support $\mathcal P_G$
(see Proposition~\ref{prop:SecondSupport}),
it is clear that $\varepsilon\simeq\varepsilon'$ implies $S(\varepsilon)=S(\varepsilon')$.

\begin{proposition}\label{thm:WittAlgebra}
For the group $\mathbb Z$, if $\varepsilon\simeq\varepsilon'$,
then there exists $\lambda\colon\mathbb Z\to\mathbb F^\times$ such that
\[
\varepsilon'(m,n)=\varepsilon(m,n)\,d\lambda(m,n)\quad
\mbox{for all }
m\ne n.
\]
\end{proposition}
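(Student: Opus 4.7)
The plan is to apply strong equivalence to the Witt algebra $W = \bigoplus_{n \in \mathbb Z} \mathbb F L_n$ equipped with its standard brackets $[L_m, L_n] = (n-m) L_{m+n}$ and the natural $\mathbb Z$-grading $W_n = \mathbb F L_n$. The crucial feature is that all homogeneous components are one-dimensional, so every $\mathbb Z$-graded linear automorphism of $W$ is forced to be a simultaneous rescaling of the basis vectors $L_n$.

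I would first observe that, by Proposition~\ref{prop:GenericContraction}, both $W^\varepsilon$ and $W^{\varepsilon'}$ are $\mathbb Z$-graded Lie algebras, so the assumption $\varepsilon \approx \varepsilon'$ supplies a graded Lie algebra isomorphism $\varphi\colon W^\varepsilon \to W^{\varepsilon'}$. By one-dimensionality of components, $\varphi(L_n) = \lambda(n) L_n$ for some $\lambda\colon \mathbb Z \to \mathbb F^\times$. Next I would expand the compatibility condition $\varphi([L_m, L_n]^\varepsilon) = [\varphi(L_m), \varphi(L_n)]^{\varepsilon'}$: the left-hand side equals $\varepsilon(m,n)(n-m)\lambda(m+n) L_{m+n}$ and the right-hand side equals $\lambda(m)\lambda(n)\varepsilon'(m,n)(n-m) L_{m+n}$. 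For $m \ne n$ the integer $n-m$ is nonzero, so dividing both sides by $(n-m) L_{m+n}$ yields
\[
\varepsilon'(m,n) = \varepsilon(m,n)\,\frac{\lambda(m)\lambda(n)}{\lambda(m+n)} = \varepsilon(m,n)(d\lambda)(m,n),
\]
as required.

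The main subtlety is the cancellation of $n-m$. Over a field of characteristic zero this is automatic, but over $\mathbb F$ of positive characteristic $p$ the scalar $(n-m)\cdot 1_{\mathbb F}$ may vanish for some $m \ne n$, and for such pairs the Witt algebra does not detect $\varepsilon(m,n)$. If this is a concern, one would complement the Witt algebra argument by exhibiting for each problematic pair $(m,n)$ an alternative $\mathbb Z$-graded Lie algebra where the bracket between the degree-$m$ and degree-$n$ components is nonzero (for instance, a suitable quotient of the free Lie algebra on chosen generators). Under the natural hypothesis that the base field behaves well, the argument reduces to the short direct computation above, and no further obstacle remains.
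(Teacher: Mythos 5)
Your proof is correct and is essentially the paper's own argument: apply the graded isomorphism to the $\mathbb Z$-graded Witt algebra, use the one-dimensionality of the homogeneous components to write $\varphi(L_n)=\lambda(n)L_n$, and read off the identity from $\varphi([L_m,L_n]_\varepsilon)=[\varphi(L_m),\varphi(L_n)]_{\varepsilon'}$. Your remark about positive characteristic (where $(n-m)\cdot 1_{\mathbb F}$ may vanish for $m\ne n$, so the Witt algebra fails to detect $\varepsilon(m,n)$ for those pairs) is a genuine subtlety that the paper's proof does not address, and your suggested fix via auxiliary graded Lie algebras with nonzero bracket between the relevant components is the right one.
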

\begin{proof}
Consider the Witt algebra $\mathcal W$ with the canonical basis given by $L_n$, $n\in\mathbb Z$,
and the Lie bracket given by
\[
[L_m,L_n]=(m-n)L_{m+n},\quad
\forall m,n\in\mathbb Z.
\]
The algebra $\mathcal W$ has a natural $\mathbb Z$-grading,
\[
\mathcal W=\bigoplus_{n\in\mathbb Z}\mathcal W_n,\quad
\mathcal W_k=\langle L_k\rangle\quad
\forall k\in\mathbb Z.
\]
Let $\varepsilon$ and $\varepsilon'$ be isomorphic $\mathbb Z$-graded contractions of $\mathcal W$.
This means that there exists an isomorphism of graded Lie algebras,
$\varphi\colon\mathcal W^\varepsilon\to\mathcal W^{\varepsilon'}$.
Since $\dim\mathcal W_n=1$,
for any $n\in\mathbb Z$ there exists $\lambda_n\in\mathbb F^\times$ such that $\varphi(L_n)=\lambda_nL_n$.
The equation
$\varphi([L_m,L_n]_\varepsilon)=[\varphi(L_m),\varphi(L_n)]_{\varepsilon'}$
then gives the result.
%\begin{gather*}
%\varepsilon(m,n)\lambda_{m+n}=\varepsilon'(m,n)\lambda_m\lambda_n.
%\end{gather*}
\end{proof}

It is natural to consider Weimar--Woods conjecture within the categorical framework,
where the notion of isomorphism is replaced with that of natural isomorphism.
In fact, Moody and Patera initiated such consideration in \cite[Section~4]{mood1991a} 
even before Conjecture~\ref{conj:WeimWoods} appeared in the literature.
They associated to graded contractions
some specific endofunctors of the category of $G$-graded Lie algebras and further observed
that the functors corresponding to contractions that are equivalent via normalization are naturally isomorphic.

We extend this analysis by providing a more general context
using monoidal categories and lax monoidal functors,
which applies not only to Lie algebras but also to Lie color algebras, in particular, to Lie superalgebras.

\medskip\par\noindent
{\it Lax monoidal functors.}
Here and in what follows we adopt the notation from~\cite{etin2015A}:
a monoidal category is given by a quintuple
$(\mathcal C,\otimes,\mathsf 1,\alpha,\iota)$,
where~$\mathcal C$ is a category, $\otimes$ denotes the tensor product bifunctor,
the natural isomorphism~$\alpha$ determines the associativity constraint,
$\mathsf 1\in\mathcal C$ is the tensor unit,
and 
$\iota\colon\mathsf 1\otimes\mathsf 1\to\mathsf 1$ is an isomorphism,
which satisfy certain axioms.
When the context is clear, we may refer to this quintuple simply as $\mathcal C$.
Furthermore, the isomorphism~$\iota$ gives rise to the unit constraints~$r$ and~$l$ \cite[Section~2.2]{etin2015A},
which may be used for an alternative definition of a monoidal category \cite[Definition~2.2.8]{etin2015A}.

A monoidal functor from $\mathcal C$ to $\mathcal C'$ is a pair $(F,J)$,
where $F\colon\mathcal C\to\mathcal C'$ is a functor
and $J_{X,Y}\colon F(X)\otimes'F(Y)\to F(X\otimes Y)$ is a natural isomorphism,
such that $F(\mathsf 1)$ is isomorphic to $\mathsf 1'$
and the diagram~\eqref{cd:Lax} below commutes.
%For a comprehensive background on monoidal categories and monoidal functors, we refer the reader to~\cite{etin2015A}.
The notion of a lax monoidal functor between monoidal categories
generalizes that of a monoidal functor 
by relaxing the condition of isomorphism with just morphism as follows:

\begin{definition}\label{def:LaxMonoidal}
Let $(\mathcal C,\otimes,\mathsf 1,\alpha,\iota)$ and 
$(\mathcal C',\otimes',\mathsf 1',\alpha',\iota')$ be monoidal categories.
A {\it lax monoidal functor} between them is a triple $(F,J,\epsilon)$,
where $F$ is a functor $F\colon\mathcal C\to\mathcal C'$ together with coherence morphisms
\begin{enumerate}\itemsep=0ex
\item 
a morphism
$\epsilon\colon\mathsf 1'\to F(\mathsf 1)$,

\item
a natural transformation 
$J_{X,Y}\colon F(X)\otimes'F(Y)\to F(X\otimes Y)$
\end{enumerate}
satisfying the following conditions
\begin{enumerate}\itemsep=0ex
\item[a.]
({\it associativity})
For all objects $X,Y,Z\in\mathcal C$ the following diagram commutes:
\begin{equation}\label{cd:Lax}
\begin{tikzcd}
\big(F(X)\otimes' F(Y)\big)\otimes' F(Z)
  \arrow[rr,"\alpha'_{F(X),F(Y),F(Z)}","\simeq"']
  \arrow[d, "{J_{X,Y}}\otimes'{\rm id}"']
&&F(X)\otimes'\big(F(Y)\otimes' F(Z)\big)
  \arrow[d, "{\rm id}\otimes'{J_{Y,Z}}"]
\\
F(X\otimes  Y)\otimes' F(Z)
  \arrow[d, "{J_{X\otimes Y,Z}}"']
&&F(X)\otimes'F(Y\otimes  Z)
  \arrow[d, "{J_{X,Y\otimes Z}}"]
\\
F\big((X\otimes  Y)\otimes Z\big)
  \arrow[rr,"F(\alpha_{X,Y,Z})","\simeq"']
&&F\big(X\otimes (Y\otimes Z)\big)
\end{tikzcd}
\end{equation}
%where $\alpha$ and $\alpha'$ denote the associators of the monoidal categories.

\item[b.]
({\it unitality})
For all $X\in\mathcal C$ the following diagrams commute:
\[
\begin{tikzcd}
\mathsf 1'\otimes'F(X)
  \arrow[r,"\epsilon\otimes'{\rm id}"] 
  \arrow[d,"l_{F(X)}'"'] 
&F(\mathsf 1)\otimes'F(X)\arrow[d,"{J_{\mathsf 1,X}}"]
\\
F(X)
&F(\mathsf 1\otimes X)
  \arrow[l,"F(l_X)"] 
\end{tikzcd}
\quad
\begin{tikzcd}
F(X)\otimes'\mathsf 1'
  \arrow[r,"{\rm id}\otimes'\epsilon"] 
  \arrow[d,"r_{F(X)}'"'] 
&F(X)\otimes'F(\mathsf 1)
  \arrow[d,"{J_{X,\mathsf 1}}"]
\\
F(X)
&F(X\otimes \mathsf 1)
  \arrow[l,"F(r_X)"] 
\end{tikzcd}
\]
where $l$, $l'$, $r$ and $r'$ denote
the left and right unit constraints of the two monoidal categories~$\mathcal C$ and~$\mathcal C'$, respectively.
\end{enumerate}
\end{definition}

A natural transformation between two lax monoidal functors
is just a natural transformation respecting the monoidal structure.

\begin{definition}\label{def:MonoidalNatTransform}
Let $(F,J,\epsilon)$ and $(G,I,\kappa)$ be lax monoidal functors from
the monoidal category $(\mathcal C, \otimes, \mathsf1, \alpha, \iota)$
to the monoidal category
$(\mathcal C', \otimes', \mathsf1', \alpha', \iota')$.
A monoidal natural transformation $\nu$ from $(F,J,\epsilon)$ to $(G,I,\kappa)$
is a natural transformation $\nu_X\colon F(X)\to G(X)$ between the underlying functors
that is compatible with the tensor products and the units
in that categories, that is,
the following diagrams commute for all objects $X,Y\in\mathcal C$:
\[
\begin{tikzcd}
F(X)\otimes'F(Y)
  \arrow[r,"\nu_X\otimes'\nu_Y"]
  \arrow[d,"{J_{X,Y}}"']
&G(x)\otimes'G(Y)
  \arrow[d,"{I_{X,Y}}"]
\\
F(X\otimes Y)\arrow[r,"\nu_{X\otimes Y}"']
&G(X\otimes Y)
\end{tikzcd}
\qquad
\begin{tikzcd}
&\mathsf 1'\arrow[dl,"\epsilon"']\arrow[dr,"\kappa"] 
\\
F(\mathsf 1)\arrow[rr,"\nu_{1}"']&& G(\mathsf 1)
\end{tikzcd}
\]
\end{definition}

\medskip\par\noindent
{\it $G$-graded vector spaces and monoidal structures on the identity endofunctor.}
Let $G$ be a group and $\mathbb F$ be any field.
Consider the category ${\rm Vec}^G$ of $G$-graded vector spaces over $\mathbb F$.
Morphisms in this category are linear maps which preserve the $G$-degree.
The tensor product $V\otimes W$ on this category is defined by the formula
\begin{gather*}
(V\otimes W)_g:=\bigoplus_{x,y\in G\colon xy=g}V_x\otimes W_y.
\end{gather*}
The unit object $\mathsf 1$ is defined by $\mathsf 1_e=\mathbb F$ and $\mathsf 1_g=0$ for $g\ne e$.
The associator $\alpha$ and the isomorphism $\iota$ are inherited from vector spaces.
This endows the category ${\rm Vec}^G$ with a structure of an $\mathbb F$-linear monoidal category.

The simple objects in the category ${\rm Vec}^G$ are one-dimensional vector spaces $\delta_g$
indexed by the elements of the group $G$.
The $G$-grading on $\delta_g$ is given by $(\delta_g)_x=\mathbb F$ if $x=g$ and $(\delta_g)_x=0$ otherwise.
For these objects, we have $\delta_g\otimes\delta_h\simeq\delta_{gh}$.

The identity endofunctor $\mathrm{Id}\colon {\rm Vec}^G\to{\rm Vec}^G$ can be given the structure of a monoidal functor as follows.
Given a function $\sigma\colon G\times G\to\mathbb F^\times$,
we take as $\epsilon$ the identity of $\mathsf 1$ define a natural isomorphism $J^\sigma_{V,W}\colon V\otimes W\to V\otimes W$,
\[
J^{\sigma}_{V,W}(v\otimes w)=\sigma(g,h)v\otimes w\quad
\mbox{for all}\ v\in V_g,\ w\in W_h\quad
\mbox{and}\quad
 g,h\in G.
\]
Then diagram~\eqref{cd:Lax} commutes if and only if
\[
\sigma(g,hk)\sigma(h,k)=\sigma(k,gh)\sigma(g,h)\quad
\mbox{for all}\quad
g,h,k\in G,
\]
which means that $\sigma$ is a two-cocycle of $G$ with coefficients in $\mathbb F^\times$
with trivial $G$-action.
We denote the monoidal functor $(\mathrm{Id},J^\sigma)$ by $F_\sigma$.

An algebra object (not necessarily associative) in ${\rm Vec}^G$ is a pair $(A,\mu)$,
where $A$ is an object and $\mu\colon A\otimes A\to A$ is a morphism in ${\rm Vec}^G$.
Of course, this is equivalent to saying that $A$ is a $G$-graded algebra.
The monoidal functor $F_\sigma=(\mathrm{Id},J^\sigma)$ allows us to twist algebra objects as follows:
$F_\sigma\colon(A,\mu)\mapsto(A,\mu_\sigma)$,
where $\mu_\sigma$ is the composition of $J^\sigma_{A,A}\colon A\otimes A\to A\otimes A$
and $F_\sigma(\mu)=\mu\colon A\otimes A\to A$.
More explicitly,
\[
\mu_\sigma(a\otimes b)=\sigma(g,h)\mu(a\otimes b)\quad
\mbox{for all}\ a\in A_g,\ b\in A_h\quad
\mbox{and}\quad
g,h\in G.
\]

Since we are interested in algebras that satisfy certain identities,
we will need an additional structure, namely,
a symmetric braiding on ${\rm Vec}^G$.
For a braiding to exist, $G$ has to be {\it abelian}.
Let $\beta\colon G\times G\to\mathbb F^\times$ be a bicharacter of $G$, 
meaning that 
\begin{gather*}
\beta(gh,k)=\beta(g,k)\beta(h,k)
\quad\mbox{and}\quad
\beta(g,hk)=\beta(g,h)\beta(g,k)\ \ 
\mbox{for all}\ \ g,h,k\in G.
\end{gather*}
Then $\beta$ defines a braiding on the monoidal category ${\rm Vec}^G$ as follows.
The natural isomorphism $c_{V,W}\colon V\otimes W\to W\otimes V$
is given by 
\[
c_{V,W}(v\otimes w)=\beta(g,h)w\otimes v\quad
\mbox{for all}\ v\in V_g,\ w\in W_h\quad
\mbox{and}\quad
g,h\in G,
\]
which is symmetric (i.e., satisfies $c_{W,V}\circ c_{V,W}={\rm id}_{V\otimes W}$)
if and only if $\beta$ is (multiplicatively) skew-symmetric:
\[
\beta(g,h)=\beta(h,g)^{-1}
\quad\mbox{for all}\
g,h\in G.
\]
We denote the resulting symmetric braided category by $({\rm Vec}^G,\beta)$.
In this category, one can define any variety of algebras
given by a set of multilinear polynomial identities
by replacing the usual action of the symmetric group of degree $n$
on $A^{\otimes n}$ by the action arising from the braiding.
In particular, Lie algebras in $({\rm Vec}^G,\beta)$ are known as {\it Lie color algebras with commutation factor~$\beta$}.

The functor $F_\sigma$ is a braided monoidal functor from $({\rm Vec}^G,\beta)$ to $({\rm Vec}^G,\beta')$,
where
\[
\beta'(g,h):=\frac{\sigma(g,h)}{\sigma(h,g)}\beta(g,h)
\quad\mbox{for all}\
g,h\in G.
\]
This is the basis of the so-called {\it Scheunert's trick}
of transforming Lie color algebras to Lie superalgebras,
see more details about this subject in Scheunert's seminal paper~\cite{sche1979a}
and in \cite{baht2002a,baht2001a,pass1998a,pop1997a}.
Note that if $\sigma$ is symmetric,
i.e., $\sigma(g,h)=\sigma(h,g)$,
then $F_\sigma$ is a braided monoidal endofunctor on $({\rm Vec}^G,\beta)$.
Hence if $A$ is an algebra in $({\rm Vec}^G,\beta)$
that belongs to a variety defined by a set of multilinear polynomial identities,
then $F_\sigma(A)$ belongs to the same variety.
In particular, Lie algebras are mapped to Lie algebras.

\medskip\par\noindent
{\it Graded contractions as Lax monoidal structures on the identity endofunctor.}
We equip the identity endofunctor $\mathrm{Id}$ with a lax monoidal structure.
For the morphism $\epsilon\colon\mathbb F\to\mathbb F$ we simply take the identity~${\rm id}_{\mathbb F}$.
Given a function $\varepsilon\colon G\times G\to\mathbb F$,
we define a natural transformation $J^\varepsilon_{V,W}\colon V\otimes W\to V\otimes W$,
\[
J^{\varepsilon}_{V,W}(v\otimes w)=\varepsilon(g,h)v\otimes w\quad
\mbox{for all}\ v\in V_g,\ w\in W_h\quad
\mbox{and}\quad
g,h\in G.
\]
Then diagram~\eqref{cd:Lax} commutes if and only if
\[
\varepsilon(g,hk)\varepsilon(h,k)=\varepsilon(k,gh)\varepsilon(g,h)\quad
\mbox{for all}\quad
g,h,k\in G.
\]
As above, we want the monoidal functor $F_\varepsilon:=(\mathrm{Id},J^\varepsilon,{\rm id}_{\mathbb F})$
to map Lie algebras to Lie algebras,
so we will further require $\varepsilon$ to be symmetric.
These two conditions make $\varepsilon$ a generic contraction in the sense of Proposition~\ref{prop:GenericContraction}.

\begin{remark}
If we drop the symmetry condition on $\varepsilon$,
the functor $F_\varepsilon$ is a braided lax monoidal functor
from $({\rm Vec}^G,\beta)$ to $({\rm Vec}^G,\beta')$
if and only if
\[
\beta'(g,h)\varepsilon(h,g)=\beta(g,h)\varepsilon(g,h)
\quad\mbox{for all}\ g,h\in G.
\]
Hence, $F_\varepsilon$ maps Lie algebras in $({\rm Vec}^G,\beta)$ to Lie algebras $({\rm Vec}^G,\beta')$.
\end{remark}

The following result shows that Conjecture~\ref{conj:WeimWoods}
holds when the isomorphism of contractions is replaced with natural isomorphism.

\begin{proposition}\label{prop:LaxWeimWoodsConj}
The lax monoidal functors $F_\varepsilon:=(\mathrm{Id},J^\varepsilon,{\rm id}_{\mathbb F})$ and $F_{\varepsilon'}:=(\mathrm{Id},J^{\varepsilon'},{\rm id}_{\mathbb F})$ are naturally isomorphic
if and only if $\varepsilon\sim_{\rm n}\varepsilon'$,
i.e., $\varepsilon'=\varepsilon\, d\alpha$
for some $\alpha\colon G\to\mathbb F^\times$.
\end{proposition}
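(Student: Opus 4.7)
The plan is to characterize monoidal natural isomorphisms $\nu\colon F_\varepsilon \Rightarrow F_{\varepsilon'}$ by analyzing their values on the simple objects $\delta_g$ and then propagating this information to arbitrary graded vector spaces via naturality. This reduces the categorical statement to the scalar identity $\varepsilon'(g,h)\alpha(g)\alpha(h) = \varepsilon(g,h)\alpha(gh)$, which is precisely $\varepsilon' = \varepsilon\, d\alpha$.

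For the sufficiency direction, I would start with $\alpha\colon G \to \mathbb{F}^\times$ satisfying $\varepsilon' = \varepsilon\, d\alpha$ (normalized so that $\alpha(e) = 1$, which is forced by the unit-compatibility diagram) and define $\nu_V\colon V \to V$ by declaring $\nu_V|_{V_g} = \alpha(g)\cdot \mathrm{id}_{V_g}$. Naturality of $\nu$ in $V$ is immediate because every morphism in $\mathrm{Vec}^G$ preserves homogeneous components. The only nontrivial verification is the tensor-compatibility square of Definition~\ref{def:MonoidalNatTransform}: evaluated on $v \otimes w \in V_g \otimes W_h$, one side yields $\varepsilon(g,h)\alpha(gh)\, v \otimes w$ (since $v \otimes w$ lies in degree $gh$), and the other yields $\varepsilon'(g,h)\alpha(g)\alpha(h)\, v \otimes w$. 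Equality is exactly the coboundary condition $\varepsilon' = \varepsilon\, d\alpha$.

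For the necessity direction, given an arbitrary monoidal natural isomorphism $\nu\colon F_\varepsilon \Rightarrow F_{\varepsilon'}$, I would first observe that each $\nu_{\delta_g}$ is an automorphism of a one-dimensional space, hence multiplication by some scalar $\alpha(g)\in \mathbb{F}^\times$; the unit-compatibility diagram forces $\alpha(e)=1$. The key step is to show that $\nu_V$ acts as the scalar $\alpha(g)$ on every homogeneous component $V_g$ of every $V$: for any $v \in V_g$ there is a unique morphism $\iota_v\colon \delta_g \to V$ in $\mathrm{Vec}^G$ sending the canonical generator of $(\delta_g)_g$ to $v$, and naturality of $\nu$ with respect to $\iota_v$ gives $\nu_V(v) = \alpha(g)\, v$. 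With $\nu_V$ now pinned down on every object, the tensor-compatibility square applied, e.g.\ to $\delta_g \otimes \delta_h$ reproduces the same scalar identity as above, yielding $\varepsilon' = \varepsilon\, d\alpha$ and hence $\varepsilon \sim_{\mathrm n} \varepsilon'$.

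The main conceptual point---and the only step requiring real thought---is the rigidity argument showing that a monoidal natural transformation is fully determined by its values on the simple objects $\delta_g$, and in fact acts by a single scalar on each graded component of any $V$. All other verifications are routine bookkeeping: the coherence and unit axioms of a monoidal natural transformation translate almost tautologically into the multiplicative coboundary equation defining $\sim_{\mathrm n}$. Thus the proposition is essentially a dictionary between two equivalent languages---one categorical and one cohomological---for the same equivalence relation on generic graded contractions.
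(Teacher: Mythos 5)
Your argument is correct and follows essentially the same route as the paper's proof: for sufficiency define $\nu_V$ as the scalar $\alpha(g)$ on each $V_g$ and check the tensor square, and for necessity read off $\alpha(g)$ from $\nu_{\delta_g}$ and evaluate the tensor-compatibility square on $\delta_g\otimes\delta_h$. Your extra rigidity step (using naturality with respect to the maps $\delta_g\to V$ to pin down $\nu_V$ on every homogeneous component, and hence to identify $\nu_{\delta_g\otimes\delta_h}$ as multiplication by $\alpha(gh)$) is left implicit in the paper, so if anything your write-up is slightly more complete.
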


\begin{proof}
Assume that there exists $\alpha\colon G\to\mathbb F^\times$ satisfying
\begin{equation}\label{eq:ContractionEquivalence}
\varepsilon(g,h)=\alpha(gh)^{-1}\alpha(g)\alpha(h)\varepsilon'(g,h).
\end{equation}
For each $V\in {\rm Vec}^G$ define an isomorphism $\nu_V\colon V\to V$ via $\nu_V(v):=\alpha(g)v$
for all $v\in V_g$ and $g\in G$.
Since any morphism $f\colon V\to W$ in ${\rm Vec}^G$ preserves $G$-degrees,
the diagram
\[
\begin{tikzcd}
V
  \arrow[r,"f"] 
  \arrow[d,"\nu_V"'] 
&W\arrow[d,"\nu_W"] 
\\
V\arrow[r,"f"'] 
&W
\end{tikzcd}
\]
commutes,
hence $\nu$ is a natural isomorphism from the identity endofunctor to itself.
We need to verify that~$\nu$ respects the lax structure.
The commutativity of the first diagram from Definition~\ref{def:MonoidalNatTransform},
applied to~$F_\varepsilon$ and~$F_{\varepsilon'}$,
\[
\begin{tikzcd}
V\otimes W
  \arrow[r,"\nu_V\otimes\nu_W"] 
  \arrow[d,"{J^\varepsilon_{V,W}}"'] 
&V\otimes W\arrow[d,"{J^{\varepsilon'}_{V,W}}"] 
\\
V\otimes W\arrow[r,"\nu_{V\otimes W}"'] 
&V\otimes W
\end{tikzcd}
\]
is equivalent to equation~\eqref{eq:ContractionEquivalence}.
The second diagram from Definition~\ref{def:MonoidalNatTransform}
commutes since $\epsilon$ in the identity.

Conversely, suppose we have a natural isomorphism $\nu\colon F_\varepsilon\to F_{\varepsilon'}$ of 
lax monoidal functors.
Consider the simple objects $\delta_g$ in ${\rm Vec}^G$.
Since $\nu_{\delta_g}\colon \delta_g\to \delta_g$ is an isomorphism,
there exists $\alpha(g)\in\mathbb F^\times$ such that $\nu_{\delta_g}(v)=\alpha(g)v$
for all $v\in \delta_g$.
In view of this, the commutativity of the diagram
\[
\begin{tikzcd}
\delta_g\otimes\delta_h
  \arrow[r,"\nu_{\delta_g}\otimes\nu_{\delta_h}"] 
  \arrow[d,"{J^\varepsilon_{\delta_g,\delta_h}}"'] 
&\delta_g\otimes\delta_h\arrow[d,"{J^{\varepsilon'}_{\delta_g,\delta_h}}"] 
\\
\delta_g\otimes\delta_h\arrow[r,"\nu_{\delta_g\otimes\delta_h}"'] 
&\delta_g\otimes\delta_h
\end{tikzcd}
\]
is equivalent to equation~\eqref{eq:ContractionEquivalence}.
\end{proof}

\section{Conclusion}\label{sec:Conclusion}

Graded contractions of Lie algebras had been studied intensively over the past three decades, mostly from computational perspective.
All contractions had been computed and classified for some specific graded Lie algebras of low dimension, see, e.g.,
\cite{cout1991a,drap2024a,drap2024b,hriv2006a,hriv2013a} and references therein.
To the best of our knowledge, only Moody and Patera in~\cite{mood1991a} and Weimar--Woods in~\cite{weim2006a}
studied generic graded contractions from theoretical point of view.
We continued this direction but using different methods.

More specifically, Moody and Patera studied graded contractions from the perspectives
of semigroup cohomology and endofunctors of the category of Lie algebras,
see more details in Sections~\ref{sec:GroupCohomology} and~\ref{sec:EquivOfContractions}.
Weimar--Woods, on the other hand, developed a theory
based on the properties of what she called ``surviving defining relations'' and ``surviving higher-order identities'',
seemingly unrelated to cohomology.
This led to a classification theorem for generic graded contractions~\cite[Theorem~7.1]{weim2006a}
over the fields of complex and real numbers.
%However, her proofs involve complicated and tedious computations, and some of her conclusions appear to be incorrect, see, e.g., Remark~\ref{eq:F_S/R_SIsFree},
%leading us to question the validity of certain results.

The works of Moody, Patera and Weimar--Woods inspired us to consider generic graded contractions
from the perspectives of group cohomology, affine algebraic geometry and monoidal categories.
These connections provided a wider context
to the classification problem of generic graded contractions,
allowing us to obtain streamlined proofs and generalize the results of % \cite{mood1991a,weim2006a}
\cite{mood1991a} and \cite{weim2006a}.\looseness=-1

Now we summarize the results of this paper and state some open problems.
We introduced the abelian group ${\rm H}^2_S(A)={\rm Hom}(\mathcal F_S/\langle\mathcal R_S\rangle,A)/{\rm B}^2_S(A)$,
which in the case $A=\mathbb F^\times$ and  $S\in\mathcal S(G)$
classifies the $\mathbb F$-valued contractions with support~$S$
up to equivalence via normalization.
In Theorem~\ref{thm:ClassificationOfContractions}, we presented exact sequences that
allow one to find the group ${\rm H}^2_S(A)$.
The special cases when $\mathbb F$ is an algebraically closed
or a real closed field were presented in Corollaries~\ref{cor:AlgClosedH^2_S} and~\ref{cor:RealClosedH^2_S}, respectively.
These corollaries were related to Theorem~7.1 from~\cite{weim2006a} in Remark~\ref{rem:SignInvariants},
thereby providing a cohomological interpretation of the notion of sign invariants from~\cite{weim2006a}.

Although we established the classification theorem (Theorem~\ref{thm:ClassificationOfContractions}) for generic graded contractions,
there is still an important question to address:
\begin{question}
Develop an efficient algorithm for finding the set of supports of generic graded contractions, $\mathcal S(G)$,
for any finite abelian group $G$.
\end{question}
\noindent
We touched upon this problem in Remark~\ref{rem:MeetSemilattice} and in Section~\ref{sec:VarietyOfContractions}.

We established in Corollary~\ref{cor:F_G/R_GIsFree} 
that $\mathcal F_G/\langle\mathcal R_G\rangle$ is a free abelian group
of rank $\leqslant |G|$.
%using an analogue of the bar resolution, see, e.g., \cite[Chapter 6.5]{weib1994A}.
More generally, the freeness of the group $\mathcal F_S/\langle\mathcal R_S\rangle$ is 
claimed in~\cite[Theorem~6.5]{weim2006a},
but the proof appears to be incorrect (see Remark~\ref{eq:F_S/R_SIsFree}).
Nevertheless, the numerical experiments that we carried out for low-order grading groups
lead us to pose the following question:
\begin{question}
Is it true that, for any $S\subset\mathcal P_G$,
the abelian group $\mathcal F_S/\langle\mathcal R_S\rangle$ is free?
\end{question}

As mentioned above, there are many works studying graded contractions of a particular $G$-graded Lie algebra.
We suggest studying generic graded contractions for particular grading groups or their classes
instead of fixing the Lie algebra:
\begin{question}
Describe the group ${\rm H}^2_S(\mathbb F^\times)$
for certain groups or classes of abelian groups, e.g., cyclic groups, elementary $p$-groups, etc.
\end{question}

In Section~\ref{sec:VarietyOfContractions}, we studied the affine algebraic variety $X$
of $G$-graded contractions and its cellular decomposition.
In Theorem~\ref{thm:VanishingIdeal}, we described the vanishing ideal
of the closure of the ${\rm T}_S$-orbit of an arbitrary generic graded contraction with support $S$.
This allowed us to relate graded contractions to graded degenerations, introduced in Section~\ref{sec:ContAndDegenOfLieAlgs},
and to In\"on\"u--Wigner contractions.
We also described the irreducible components
of the varieties of generic graded contractions of the grading groups $\mathbb Z_2$ and $\mathbb Z_3$,
see Examples~\ref{ex:Z2} and~\ref{ex:Z3}.
This leads us to the following question:
\begin{question}
How does the structure of the variety $X$ of generic graded contractions depend on the structure of the grading group $G$?
\end{question}

Since the ideal defining $X$ is generated by binomials,
it follows from~\cite{eise1996a} that the vanishing ideals of $X$ and its irreducible components
are also binomial, which implies that the irreducible components of $X$ are affine toric varieties, see, e.g.,~\cite{cox2011A}.
(A particular case is the irreducible component $X(\bar1)$ consisting of continuous graded contractions,
see Proposition~\ref{prop:X(1)Irred}.)
Moreover, $X$ is defined by homogeneous polynomials, so we can consider the corresponding projective variety and its components.
This observation paves the way to using a wide range of methods from toric geometry, such as combinatorics of polyhedra,
for further study of the components of~$X$.

Finally, in Section~\ref{sec:EquivOfContractions}, we revisited Conjecture~\ref{conj:WeimWoods} from~\cite{weim2006a}.
Interpreting generic $G$-graded contractions
as lax monoidal structures on the identity endofunctor
of the monoidal category of $G$-graded vector spaces,
we established a functorial version of this conjecture, but the original remains open.

\subsection*{Acknowledgments}
%\medskip\par\noindent{\bf Acknowledgments.}
The authors are grateful to the anonymous reviewer whose feedback helped improve the exposition.
MVK is supported by NSERC Discovery Grant 2018-04883.
SDK is grateful to Dr.~Alex Bihlo for the support of his research thanks to funding from the Canada Research Chairs program,
the InnovateNL Leverage R{\&}D Program and the NSERC Discovery Grant.
SDK also acknowledges support from Dr.~Yuri Bahturin’s NSERC Discovery Grant.

\noprint{
\appendix

\section{{\sf Maple} code}

Let $G$ be a finite abelian group.
By fundamental theorem on finitely generated abelian groups~$G$ is isomorphic to the direct product of cyclic groups
$\mathbb Z_{k_1}\times\mathbb Z_{k_1}\times\cdots\times\mathbb Z_{k_r}$,
where~$k_1$ divides~$k_2$, $k_2$ divides~$k_3$ and so on up to~$k_r$.
In this algorithm, we assume that the group $G$ is presented in this canonical form.

\todo\cite{beru1993a}, they consider specific grading, but including the generic case (depending on the parameter $\kappa$).
This is why we are interested rather in finding supports than writing the code for specific grading.

\begin{verbatim}
> restart;
  with(combinat,cartprod,subsets,Groebner):
> r:= <-- input r from the decomposition of G via Z_{k_1}*...*Z_{k_r}
  n:=array(1..r,[seq(0,i=1..r)]); %creates array of size r filled with zeros
> n[1]:=k_1,n[2]:=k_2,...,n[r]:=k_r;

# creating array of elements of the group G
> for j from 1 to r do
    Z[j]:={seq(i, i=0..n[j]-1)}:
  od:
  Z:=[seq(Z[i],i=1..r)]:
  Els:=[];
  T:=cartprod(Z):
  while not T[finished] do
    Els:=[T[nextvalue](),op(Els)]:
  end do:
  Els;

# procedure for definig addition in G
> add1:=(x::list,y::list,n::array)
            ->[seq(x[i]+y[i] mod n[i], i=1..numelems(x))];

# Defining equations and the set of unordered pairs PG
> A:=[]:
  B:=[]:
  for i in Els do
    for j in Els do
      for k in Els do
         A:=[x[{i,j}]*x[{add1(i,j,n),k}]-x[{i,add1(j,k,n)}]*x[{j,k}], op(A)]:
         B:=[x[{i,j}],op(B)]:
      od:
    od:
  od:
  DefEq:=[op({op(A)})]:
  PG:=[op({op(B)})]:

\end{verbatim}

The variables $x_{\{i,j\}}$ are indexed by unordered pairs of elements of $G$.
}


\begin{thebibliography}{10}\small\itemsep=0.2ex

\bibitem{baht2002a}
Bahturin Y., Kochetov M. and Montgomery S.,
Polycharacters of cocommutative Hopf algebras,
{\it Canad. Math. Bull.} {\bf 45} (2002), 11--24.

\bibitem{baht2001a}
Bahturin Y., Fischman D. and Montgomery S.,
Bicharacters, twistings, and Scheunert's theorem for Hopf algebras,
{\it J.~Algebra} {\bf 236} (2001), 246--276.

\bibitem{beru1993a}
B\'erub\'e D. and de Montigny M.,
The computer calculation of graded contractions of Lie algebras and their representations,
{\it Comput. Phys. Comm.} {\bf 76} (1993), 389--410.

\bibitem{bore1991A}
Borel A.,
{\it Linear Algebraic Groups}, 
Springer, New York, 1991, 288~pp.

%\bibitem{burd2007a}
%Burde D.,
%Contractions of Lie algebras and algebraic groups,
%{\it Arch. Math.}, {\bf 43} (2007), 321--332.

\bibitem{carl1984a}
Carles R.,
Sur la structure des alg\'ebres de Lie rigides,
{\it Ann. Inst. Fourier}, {\bf 34} (1984), 65--82.

\bibitem{catt1979a}
Cattaneo U. and Wreszinski W.,
On contraction of Lie algebra representations,
\textit{Comm. Math. Phys.} \textbf{68} (1979), 83--90.

\bibitem{cout1991a}
Couture M., Patera J., Sharp R.T. and Winternitz P., 
Graded contractions of ${\rm sl}(3,\mathbb C)$,
{\it J.~Math. Phys.} {\bf 32} (1991), 2310--2318.

\bibitem{cox2015A}
Cox D.A., Little J. and O'Shea D.,
\textit{Ideals, varieties, and algorithms --- an introduction to computational algebraic geometry and commutative algebra},
Springer, 2015, 646 pp.


\bibitem{cox2011A}
Cox D.A., Little J. and Schenck H.K.,
\textit{Toric varieties.}
American Mathematical Society, Providence, 2011,
841 pp.

\bibitem{deck2024A}
Decker W., Greuel G.-M., Pfister G. and  Sch{\"o}nemann H., 
{\sc Singular} {4-4-0} --- A computer algebra system for polynomial computations,
{https://www.singular.uni-kl.de}, 2024.

\bibitem{drap2024b}
Draper C., Meyer T. and S\'anchez-Ortega J.,
Graded contractions on the orthogonal Lie algebras of dimensions 7 and 8,
{\it Int. J. Geom. Methods Mod. Phys},
doi: 10.1142/S0219887825502809,
arXiv:2409.18069.

\bibitem{drap2024a}
Draper C., S\'anchez-Ortega J. and Meyer T.,
Graded contractions of the $\mathbb Z_2^3$-grading on $\mathfrak g_2$,
{\it J.~Algebra} {\bf 658} (2024), 592--643.

\bibitem{doeb1967a}
Doebner, H. D. and Melsheimer O.,
On a class of generalized group contractions,
{\it Nuovo Cimento~A} {\bf 49} (1967), 306--311.

\bibitem{dool1985a}
Dooley A.H. and Rice J.W.,
On contractions of semisimple Lie groups,
\textit{Trans. Amer. Math. Soc.} \textbf{289} (1985), 185--202.

\bibitem{eber2009a}
Eberlein P. and Jablonski M.,
Closed orbits of semisimple group actions and the real Hilbert-Mumford function,
New developments in Lie theory and geometry, 283--321.
{\it Contemp. Math.}, 491
American Mathematical Society, Providence, 2009.

\bibitem{eise1996a}
Eisenbud D. and Sturmfels B.,
Binomial ideals,
{\it Duke Math.~J.} {\bf 84} (1996), 1--45.

\bibitem{eldu2013A}
Elduque A. and Kochetov M.,
{\it Gradings on simple Lie algebras},
American Mathematical Society, Providence, 2013, 336~pp.

\bibitem{etin2015A}
Etingof P., Gelaki S., Nikshych D. and Ostrik V.,
{\it Tensor categories},
American Mathematical Society, Providence, 2015, 343~pp.

\bibitem{fial2008a}
Fialowski A.,
Deformations in mathematics and physics,
{\it Internat. J. Theoret. Phys.} {\bf 47} (2008), 333--337.

\bibitem{fial2005a}
Fialowski A. and de Montigny M.,
Deformations and contractions of Lie algebras,
{\it J.~Phys.~A} {\bf 38} (2005), 6335--6349.

\bibitem{fial2006a}
Fialowski A. and de Montigny M.,
On deformations and contractions of Lie algebras,
{\it SIGMA} {\bf 2} (2006), 048, 10 pp.

\bibitem{goze2006a}
Goze M.,
Lie algebras: Classification, Deformations and Rigidity,
arXiv:math/0611793. 

\bibitem{grun1988a}
Grunewald F. and O'Halloran J.,
Varieties of nilpotent Lie algebras of dimension less than six,
{\it J.~Algebra} {\bf 112} (1988), 315--325.

\bibitem{hart1977A}
Hartshorne R.,
\textit{Algebraic geometry},
Springer, 1977, 496 pp.

\bibitem{hriv2006a}
Hrivn\'ak J., Novotn\'y P., Patera J. and Tolar J.,
Graded contractions of the Pauli graded $\mathfrak{sl}_3(\mathbb C)$,
{\it Linear Algebra Appl.} {\bf 418} (2006), 498--550.

\bibitem{hriv2013a}
Hrivn\'ak J. and Novotn\'y P.,
Graded contractions of the Gell-Mann graded $\mathfrak{sl}_3(\mathbb C)$,
{\it J.~Math. Phys.} {\bf 54} (2013), 081702.


\bibitem{inon1953a}
In\"on\"u E. and Wigner E.P.,
On the contraction of groups and their representations,
{\it Proc. Nat. Acad. Sci. USA} {\bf 39} (1953), 510--524.

\bibitem{inon1954a}
In\"on\"u E. and Wigner E.P.,
On a particular type of convergence to a singular matrix,
{\it Proc. Nat. Acad. Sci. USA} {\bf 40} (1954), 119--121.

\bibitem{kaln1999a}
Kalnins E.G., Miller W. Jr. and Pogosyan G.S.,
Contractions of Lie algebras: applications to special functions and separation of variables,
\textit{J.~Phys.~A} \textbf{32} (1999), 4709--4732.

\bibitem{kash2003a}
Kashuba I. and Patera J.,
Graded contractions of Jordan algebras and of their representations,
{\it J.~Phys.~A } {\it 36} (2003), 12453–12473.

\bibitem{kost2001a}
Kostyakov I. V., Gromov N. A. and Kuratov V. V.,
Modules of graded contracted Virasoro algebras,
{\it Nuclear Phys. B Proc. Suppl.} {\bf 102/103} (2001), 316--321.

\bibitem{levy1967a}
L\'evy-Nahas M.,
Deformation and contraction of Lie algebras,
\textit{J. Mathematical Phys.} \textbf{8} (1967), 1211--1222.

\bibitem{macl1963A}
Mac Lane S.,
\textit{Homology},
Springer, Berlin, 1963, 422 pp.

\bibitem{miln1976a}
Milnor J.,
Curvatures of left invariant metrics on Lie groups,
{\it Advances in Math.} {\bf 21} (1976), 293--329.

\bibitem{mumf1999A}
Mumford, David
{\it The red book of varieties and schemes},
Second, expanded edition. Includes the Michigan lectures (1974) on curves and their Jacobians. With contributions by Enrico Arbarello,
Springer, Berlin, 1999.

\bibitem{mood1991a}
Moody R.V. and Patera J.,
Discrete and continuous graded contractions of representations of Lie algebras,
{\it J.~Phys.~A} {\bf 24} (1991), 2227--2257.

\bibitem{mont1996a}
de Montigny M.,
Graded contractions of affine Lie algebras,
{\it J.~Phys.~A} {\bf 29} (1996), 4019--4034.

\bibitem{mont1991a}
de Montigny M. and Patera J., 
Discrete and continuous graded contractions of Lie algebras and superalgebras,
{\it J.~Phys.~A} {\bf 24} (1991), 525--547.

\bibitem{nest2006a}
Nesterenko M. and Popovych R.O.,
Contractions of low-dimensional Lie algebras,
{\it J.~Math. Phys} {\bf 47} (2006), 123515, arXiv:math-ph/0608018.

\bibitem{pass1998a}
Passman D.S.,
Simple Lie color algebras of Witt type,
{\it J.~Algebra} {\bf 208} (1998), 698--721.

\bibitem{pop1997a}
H. Pop,
A generalization of Scheunert’s theorem on cocycle twisting of Lie color algebras, 
arXiv:q-alg/9703002.

%\bibitem{popo2010a}
%Popovych D.R. and Popovych R.O.,
%Lowest-dimensional example on non-universality of generalized In\"on\"u–Wigner contractions,
%{\it J.~Algebra} {\bf 324} (2010), 2742--2756.

\bibitem{sale1961a}
Saletan, E.J.,
Contraction of Lie groups,
{\it J.~Math. Phys.} {\bf 2} (1961), 1--21.

\bibitem{sega1951a}
Segal I.E.,
A class of operator algebras which are determined by groups,
{\it Duke Math.~J.} {\bf 18} (1951), 221--265.

\bibitem{sche1979a}
Scheunert M.,
Generalized Lie algebras,
{\it J.~Math. Phys.} {\bf 20} (1979), 712--720.

\bibitem{weib1994A}
Weibel C.A.,
\textit{An introduction to homological algebra},
Cambridge University Press, Cambridge, 1994, 450 pp.

\bibitem{weim1991a}
Weimar--Woods E.,
Contraction of Lie algebra representations,
\textit{J.~Math. Phys.} \textbf{32} (1991), 2660--2665.

\bibitem{weim1995a}
Weimar--Woods E.,
Contractions of Lie algebras: generalized In\"on\"u-Wigner contractions versus graded contractions,
{\it J. Math. Phys.} {\bf 36} (1995), 4519--4548.

%\bibitem{weim2000a}
%Weimar--Woods E.,
%Contractions, generalized Inönü-Wigner contractions and deformations of finite-dimensional Lie algebras,
%{\it Rev. Math. Phys.} {\bf 12} (2000), 1505--1529.

\bibitem{weim2006a}
Weimar--Woods E.,
The general structure of $G$-graded contractions of Lie algebras. I. The classification,
{\it Canad. J.~Math.} {\bf 58} (2006), 1291--1340.

\bibitem{weim2006b}
Weimar--Woods E.,
The general structure of $G$-graded contractions of Lie algebras. II. The contracted Lie algebra,
{\it Rev. Math. Phys.} 18 (2006), 655--711.

\bibitem{weim2008a}
Weimar--Woods E.,
Contractions of invariants of Lie algebras with applications to classical inhomogeneous Lie algebras,
\textit{J. Math. Phys.} \textbf{49} (2008), 033507, 26 pp.
	
\end{thebibliography}
\end{document}